\definecolor{amaranth}{rgb}{0.9, 0.17, 0.31}
\let\oldtocsection=\tocsection
\let\oldtocsubsection=\tocsubsection
\let\oldtocsubsubsection=\tocsubsubsection
\renewcommand{\tocsection}[2]{\hspace{0em}\oldtocsection{#1}{#2}}
\renewcommand{\tocsubsection}[2]{\hspace{1em}\oldtocsubsection{#1}{#2}}
\renewcommand{\tocsubsubsection}[2]{\hspace{0.5em}\oldtocsubsubsection{#1}{#2}}
\newtheorem{theorem}{Theorem}[section]
\newtheorem{lemma}[theorem]{Lemma}
\newtheorem{corollary}[theorem]{Corollary}
\newtheorem{conjecture}[theorem]{Conjecture}
\newtheoremstyle{defstyle}
  {.6em} 
  {.1em} 
  {} 
  {} 
  {\bfseries} 
  {.} 
  {.5em} 
  {} 
\theoremstyle{defstyle} \newtheorem{definition}[theorem]{Definition}
\newtheorem{example}[theorem]{Example}
\theoremstyle{remark}
\newtheorem{remark}[theorem]{Remark}
\numberwithin{equation}{section}
\numberwithin{figure}{section}
\newcommand{\CC} {\mathbb{C}}
\newcommand{\cO} {\mathcal{O}}
\newcommand{\PP} {\mathbb{P}}
\newcommand{\QQ} {\mathbb{Q}}
\newcommand{\RR} {\mathbb{R}}
\newcommand{\ZZ} {\mathbb{Z}}
\newcommand{\ev} {\mathrm{ev}}
\newcommand{\pt}{\mathrm{pt}}
\newcommand{\vir}{\mathrm{vir}}
\begin{document}

\title[BPS polynomials and Welschinger invariants]{BPS polynomials and Welschinger invariants}

\author[H.\,Arg\"uz]{H\"ulya Arg\"uz}
\address{University of Georgia, Department of Mathematics, Athens, GA 30605}
\email{Hulya.Arguz@uga.edu}

\author[P.\,Bousseau]{Pierrick Bousseau}
\address{University of Georgia, Department of Mathematics, Athens, GA 30605}
\email{Pierrick.Bousseau@uga.edu}

\begin{abstract}
We generalize Block-G\"ottsche polynomials, originally defined for toric del Pezzo surfaces, to arbitrary surfaces. To do this, we show that these polynomials arise as special cases of BPS polynomials, defined for any surface $S$ as Laurent polynomials in a formal variable $q$ encoding the BPS invariants of the $3$-fold $S \times \mathbb{P}^1$. We conjecture that for surfaces $S_n$ obtained by blowing up $\mathbb{P}^2$ at $n$ general points, the evaluation of BPS polynomials at $q=-1$ yields Welschinger invariants, given by signed counts of real rational curves. We prove this conjecture for all surfaces $S_n$ with $n \leq 6$. 
\end{abstract}
\newtheoremstyle{cited}%
  {3pt}
  {3pt}
  {\itshape}
  {}
  {\bfseries}
  {.}
  {.3em}
  {\thmname{#1} \thmnumber{#2}\thmnote{\normalfont#3}}

\theoremstyle{cited}
\newtheorem{citedthm}{Theorem}
\newtheorem{citedconj}[citedthm]{Conjecture}
\renewcommand*{\thecitedthm}{\Alph{citedthm}}

\maketitle

\setcounter{tocdepth}{2}
\tableofcontents
\setcounter{section}{0}

\section{Introduction}

\subsection{Overview}

BPS invariants are integers underlying the higher genus Gromov--Witten theory of $3$-folds. For any smooth projective surface $S$, we organize the BPS invariants of the $3$-fold $S \times \PP^1$ into BPS polynomials, which are Laurent polynomials in a formal variable $q$. These polynomials refine counts of complex rational curves in $S$, in the sense that they specialize at $q=1$ to genus zero Gromov--Witten invariants of $S$. In the case of toric del Pezzo surfaces, we show that BPS polynomials coincide with Block–G\"ottsche polynomials defined using tropical geometry. 

Beyond the toric setting, we conjecture that for surfaces $S_n$ obtained by blowing up $\mathbb{P}^2$ at $n$ general points, evaluating BPS polynomials at $q = -1$ recovers the Welschinger invariants, which are signed counts of real rational curves. Using Brugall\'e’s floor diagram techniques, we verify a relative version of this conjecture for all $n$. Furthermore, employing a refined version of the Abramovich–Bertram–Vakil formula for $n = 6$, we prove the main conjecture for all surfaces $S_n$ with $n \leq 6$. This establishes a striking interpolation between real and complex curve enumerations through higher genus Gromov--Witten theory. 

Finally, we conjecture that BPS polynomials of $S$ can be expressed in terms of K-theoretic refined BPS invariants of the non-compact Calabi--Yau 3-fold $K_S$, the total space of the canonical line bundle over $S$. This predicts a surprising relation between higher genus Gromov--Witten theory of $S\times \PP^1$ and refined sheaf counting theory of $K_S$.

\subsection{BPS polynomials}

Gromov–Witten theory of a complex $3$-fold $X$ yields a wealth of numerical invariants, owing to the fact that the virtual dimension of any curve-counting problem remains independent of the curve's genus. Specifically, for a fixed curve class $\beta \in H_2(X,\ZZ)$, and a set of cohomology insertions $\gamma$, we obtain Gromov–Witten invariants $GW_{g,\beta,\gamma}^X$, for every genus $g$. These invariants are virtual counts of genus $g$ complex curves in $X$ of class $\beta$ satisfying constraints imposed by $\gamma$. Although Gromov--Witten invariants are typically rational numbers, remarkably they can be encoded into integer values $BPS_{g,\beta,\gamma}^X$, known as BPS invariants, as first conjectured by Gopakumar--Vafa \cite{GV1, GV2, Pandh1999, Pandha3questions} and proved in \cite{IP, zinger2011}.

By \cite{doan2021gopakumar, DW23}, for fixed $\beta$ and $\gamma$, there exist only finitely many values of $g$ such that $BPS_{g,\beta,\gamma}^X \neq 0$. Hence, the BPS invariants $BPS_{g,\beta, \gamma}^X$ can be naturally arranged into a Laurent polynomial in a formal variable $q$, referred to as a BPS polynomial, given as in Definition \ref{def_bps_3fold} by
\[ BPS^X_{\beta,\gamma}(q) := \sum_{g \geq 0} BPS_{g,\beta, \gamma}^X
(-1)^g (q-2+q^{-1})^{g} \in \ZZ[q^{\pm}] \,. \] 
In this paper, we study BPS polynomials associated with $3$-folds of the form $X = S \times \mathbb{P}^1$, where $S$ is a smooth projective surface. 
For every $\beta \in H_2(S,\ZZ)$ such that \[m_{\beta} := -1 +  c_1(S)\cdot\beta \geq 0\,,\] we define in \S \ref{section_bps_surfaces} the BPS polynomial of $S$ of class $\beta$ by
\[BPS_\beta^S(q):=BPS_{(\beta,0),\gamma_\beta}^{S \times \PP^1}(q)\,,\] where $BPS_{(\beta,0),\gamma_\beta}^{S \times \PP^1}(q)$  is the BPS polynomial of $S \times \PP^1$ of class $(\beta,0)\in H_2(S \times \PP^1,\ZZ)$, 
with insertions $\gamma_\beta$ lifted from $m_{\beta}$ points in $S$ and one point in $\PP^1$. 
As established in \S\ref{section_bps_surfaces}, the BPS polynomials $BPS_\beta^S(q)$ encode Gromov--Witten invariants $GW_{g,\beta}^S$ of the surface $S$ with $\lambda_g$-class insertion as defined in \eqref{eq_gw_S}. 
The $\lambda_g$-class insertion originates from the discrepancy between the obstruction theories for curves in $S$ and those in $S \times \PP^1$.
Explicitly as stated in \eqref{eq_bps_S}, under the substitution $q=e^{iu}$, we get
\begin{equation*} 
BPS_\beta^S(q)=\left(2 \sin \left(\frac{u}{2}\right) \right)^{1-m_\beta} \sum_{g \geq 0}
GW_{g,\beta}^S u^{2g-1+m_\beta} \,.
\end{equation*}
Consequently, for $q=1$, the value $BPS_\beta^S(1)$ is equal to the Gromov--Witten count $GW_{0,\beta}^S$ of rational curves in $S$ of class $\beta$ passing through $m_{\beta}$ general points. Hence, one can view the polynomial $BPS_\beta^S(q)$ as a natural refinement of  $GW_{0,\beta}^S$.

For any $3$-fold $X$, the Gromov--Witten/pairs correspondence \cite{MNOP1, MNOP2}\cite[Conjecture 3.28]{PT1}, proved in many cases \cite{MOOP, PP, pardon2023universally}, predicts that the BPS invariants $BPS_{g,\beta,\gamma}^X$ of $X$ can be alternatively described in terms of Pandharipande--Thomas invariants $PT_{\beta,\chi,\gamma}^X$. These invariants are defined via moduli spaces of stable pairs $(\cO_X \rightarrow F)$ where $F$ is a one-dimensional sheaf satisfying $[F]=\beta$ and $\chi(F)=\chi$, and with insertions determined by $\gamma$ \cite{PT1, PT2}. 
When $X=S \times \PP^1$ and $m_\beta\geq 0$, we demonstrate in \S \ref{sec_stable_pairs} that the general Gromov--Witten/pairs correspondence simplifies to the following expression with the BPS polynomial of $S$:
\[ \sum_{\chi \in \ZZ} PT_{\beta,\chi,\gamma_\beta}^{S \times \PP^1} (-q)^{\chi}=-q (1-q)^{m_\beta-1} BPS_\beta^S(q)\,.\]

The main objective of this paper is to establish that BPS polynomials encode invariants in real algebraic geometry, capturing the signed enumeration of real curves in rational surfaces, known as Welschinger invariants.

\subsection{BPS polynomials and Welschinger invariants}
\subsubsection{Welschinger invariants}
Let $S$ be a rational smooth projective surface over $\CC$. Up to deformation, $S$ is either isomorphic to $\PP^1 \times \PP^1$ or to a surface $S_n$ obtained by blowing-up $n$ general points in $\PP^2$. Given a real structure, that is, an anti-holomorphic involution, on $S$, and a class
$\beta \in H_2(S,\ZZ)$ satisfying $m_{\beta} = -1 +  c_1(S)\cdot\beta \geq 0$, Welschinger introduced a signed count of real rational curves in $S$ of class $\beta$ passing through a general real configuration $x$ of $m_\beta$ points (see \S\ref{section_real} for the description of the Welschinger signs). Remarkably, these signed counts define invariants, in the sense that they depend only on the deformation class of the real structure and on the number of real points in $x$ \cite{We, We1}. These invariants, known as Welschinger invariants, play a fundamental role in real algebraic geometry, providing lower bounds for the enumeration of real curves. They have been extensively studied, particularly in the context of del Pezzo surfaces \cite{ABLM, BrugFloorconic, brugalle2021invariance, BP1, BP2, itenberg2003appendix, IKS_CH, IKS3, IKS0, IKS2}. Moreover, Welschinger invariants can be interpreted as examples of open Gromov–Witten invariants \cite{cho_open_gw, open_gw, solomon2006intersection, ST_open_gw}.

In this paper, we work with the standard real structure on $S$. When $S=\PP^1 \times \PP^1$, this corresponds to the real structure whose real locus is $\mathbb{R}\mathbb{P}^1 \times \mathbb{R}\mathbb{P}^1$. For $S = S_n$, the standard real structure is induced from the standard real structure on $\mathbb{P}^2$, whose real locus is $\mathbb{R}\mathbb{P}^2$, by blowing up $n$ general real points. Additionally, when fixing a real configuration of $m_{\beta}$ points, we consider purely real configurations, consisting exclusively of real points without pairs of complex conjugate points. We denote the corresponding Welschinger invariant by $W_{\beta}^S$.

\subsubsection{The toric case} When $S$ is a toric del Pezzo surface, that is, $S=\PP^1 \times \PP^1$ or $S=S_n$ with $n \leq 3$, Block–G\"ottsche introduced Laurent polynomials $BG_{\beta}^{S}(q)$ in a formal variable $q$, using purely combinatorial techniques from tropical geometry. These polynomials refine complex rational curve counts on $S$, in the sense that evaluating them at $q = 1$ recovers the genus zero Gromov-Witten invariants $GW_{0,\beta}^S$ of $S$. Moreover, they have the remarkable property that evaluating at $q = -1$ yields the Welschinger invariants $W_\beta^S$.
In our first main result, Theorem \ref{thm_absolute_relative}, we prove:
\begin{citedthm} \label{thm_A}
Let $S$ be a toric del Pezzo surface. Then, for every $\beta \in H_2(S,\ZZ)$
such that $\beta \cdot D_j \geq 0$ for every toric divisor $D_j$ of $S$, the BPS polynomial $BPS_\beta^{S}(q)$ is equal to the Block-G\"ottsche polynomial $BG_\beta^{S}(q)$:
\[ BPS_\beta^{S}(q) = BG_\beta^{S}(q) \,.\]
\end{citedthm}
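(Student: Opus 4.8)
The plan is to establish both sides as tropical/BPS counts and match them via a known correspondence theorem. Both $BPS_\beta^S(q)$ and $BG_\beta^S(q)$ are Laurent polynomials in $q$ that specialize at $q=1$ to the same genus-zero Gromov--Witten invariant $GW_{0,\beta}^S$; the content of Theorem~\ref{thm_A} is that the full $q$-refinements agree, not merely their $q=1$ values. The natural strategy is to show that each coefficient of $BPS_\beta^S(q)$, which encodes a genuine BPS/Gromov--Witten invariant of $S \times \PP^1$, coincides with the corresponding tropical multiplicity-weighted count defining $BG_\beta^S(q)$.

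Let me think about the cleanest route. Rather than comparing coefficient-by-coefficient, I would exploit the sine-substitution formula from the excerpt: under $q=e^{iu}$ we have
\[
BPS_\beta^S(q) = \left(2\sin\!\left(\tfrac{u}{2}\right)\right)^{1-m_\beta} \sum_{g \geq 0} GW_{g,\beta}^S\, u^{2g-1+m_\beta}.
\]
This identifies $BPS_\beta^S(q)$ with the full generating series of higher-genus Gromov--Witten invariants of $S$ carrying the $\lambda_g$-insertion. **First I would** invoke the established correspondence between these $\lambda_g$-weighted higher-genus Gromov--Witten invariants of a toric surface and tropical curve counts with Block--G\"ottsche multiplicities. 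For toric del Pezzo surfaces this is precisely the circle of ideas connecting the refined tropical vertex, log Gromov--Witten theory, and the work of Bousseau relating higher-genus log invariants to refined tropical counts: the key prior result is that the Block--G\"ottsche polynomial, written in the variable $q$, is the generating function of exactly these genus-$g$ log/relative Gromov--Witten invariants with a $\lambda_g$-class, under the same $q=e^{iu}$ substitution. So the proof reduces to matching the absolute Gromov--Witten invariants $GW_{g,\beta}^S$ appearing on the BPS side with the relative/log invariants appearing on the tropical side.

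**The main obstacle** I anticipate is precisely this absolute-versus-relative comparison. The BPS polynomial is defined via absolute Gromov--Witten theory of $S \times \PP^1$ (equivalently absolute $\lambda_g$-weighted invariants of $S$), whereas the tropical Block--G\"ottsche count naturally computes invariants relative to the toric boundary divisor. Reconciling these requires a degeneration or the log-to-absolute comparison, where the hypothesis $\beta \cdot D_j \geq 0$ for every toric divisor $D_j$ is essential: it guarantees that the curve class meets the boundary non-negatively, so that the contributions of curves touching the toric boundary are controlled and the absolute and maximally-tangent log invariants coincide up to the expected combinatorial factors. I would handle this by passing to a toric degeneration of $S$ and applying the degeneration formula, tracking the $\lambda_g$-insertions through the degeneration so that the floor/tropical decomposition on the BPS side lines up term-by-term with the Block--G\"ottsche tropical count. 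This is the step I expect to consume most of the technical effort; the remaining matching of combinatorial multiplicities (the quantized $[k]_q$-weights on tropical vertices versus the $(2\sin(u/2))$-factors) should then be a direct, if careful, computation. Indeed, this is exactly the "absolute versus relative" dichotomy flagged in the paper's abstract and overview, and Theorem~\ref{thm_absolute_relative} is stated so as to bundle both comparisons into one.
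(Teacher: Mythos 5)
Your proposal follows essentially the same route as the paper: express $BG_\beta^S(q)$ through the higher-genus log Gromov--Witten invariants $GW_{g,\beta}^{S/D}$ with $\lambda_g$-insertion via \cite{Bou2019}, express $BPS_\beta^S(q)$ through the absolute invariants $GW_{g,\beta}^S$ via the sine-substitution formula of Lemma \ref{lem_X_S}, and reduce everything to the absolute-versus-log comparison $GW_{g,\beta}^S = GW_{g,\beta}^{S/D}$, which the paper proves as Theorem \ref{thm_absolute_relative_0} by exactly the degeneration you describe (degeneration to the normal cone of the toric boundary, the decomposition formula, $\lambda_g$-vanishing, and a dimension count isolating the contributing tropical curve). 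You correctly identified both the key prior result and the main technical obstacle, so the proposal matches the paper's proof in structure and substance.
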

As established in  \cite{Bou2019}, Block–G\"ottsche invariants can be expressed in terms of higher genus log Gromov–Witten invariants of $S$ relative to its toric boundary. Consequently, to prove Theorem \ref{thm_A}, it suffices to show a correspondence between Gromov--Witten invariants of $S$ and log Gromov--Witten invariants of $S$ relative to the toric boundary. This is achieved in Theorem \ref{thm_absolute_relative_0}, which is proved through a degeneration argument.

\subsubsection{The non-toric case}
For $n>3$, the surface $S_n$ is no longer toric, and so the Block-G\"ottsche polynomials are no longer defined.
Nevertheless, the BPS polynomials are still defined and
we expect that the relationship with Welschinger invariants at $q=-1$ persists in this broader non-toric context, as proposed in Conjecture \ref{conj_bps_w}:

\begin{citedconj}
\label{conj_intro}
For every $n \geq 0$, let $S_n$ be the blow-up of $\PP^2$ at $n$ general real points, and $\beta \in H_2(S_n,\ZZ)$ be a curve class such that $m_\beta:=-1+c_1(S_n)\cdot \beta \geq 0$.
Then, the specialization at $q=-1$ of the BPS polynomial $BPS_\beta^{S_n}$ coincides with the Welschinger count $W_\beta^{S_n}$ of real rational curves in $S_n$ passing through $m_\beta$ real points in general position:
    \[ BPS_{\beta}^{S_n}(-1)=W_{\beta}^{S_n} \,.\]
\end{citedconj}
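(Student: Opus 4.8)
The plan is to induct on $n$, reducing the absolute identity $BPS_\beta^{S_n}(-1)=W_\beta^{S_n}$ to a relative version attached to a degeneration, and to establish the full statement in the accessible range $n\le 6$ (proving it for every $n$ would require pushing the recursion below further). The base cases $n\le 3$ are immediate: there $S_n$ is a toric del Pezzo surface, so Theorem~\ref{thm_A} identifies $BPS_\beta^{S_n}(q)$ with the Block--G\"ottsche polynomial $BG_\beta^{S_n}(q)$, and Conjecture~\ref{conj_intro} follows from the known tropical fact that $BG_\beta^{S_n}(-1)=W_\beta^{S_n}$. The entire difficulty lies in the range $4\le n\le 6$, where no Block--G\"ottsche polynomial is available.

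First I would formulate a relative refinement: fix a smooth rational curve $D\subset S_n$ and define a relative BPS polynomial $BPS_\beta^{(S_n,D)}(q)$ through higher-genus log Gromov--Witten theory of $S_n$ relative to $D$, lifted to $S_n\times\PP^1$ with a $\lambda_g$-insertion as in the absolute case, so that it specializes to relative genus-zero counts at $q=1$ and to a relative Welschinger number at $q=-1$. The strategy for the relative statement is to show, in the spirit of the log-tropical correspondence of \cite{Bou2019} that underlies Theorem~\ref{thm_A}, that $BPS_\beta^{(S_n,D)}(q)$ is computed by Brugall\'e's floor diagrams \cite{BrugFloorconic} weighted by $q$-deformed edge multiplicities. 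The decisive feature is that the same floor diagrams, under a real weighting, compute the relative Welschinger invariant, and that at $q=-1$ the $q$-multiplicity of each edge collapses to precisely its Welschinger sign. Matching the two weightings edge-by-edge at $q=-1$ then yields the relative conjecture, and since floor diagrams are available uniformly, this relative statement holds with no restriction on $n$.

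The passage from the relative to the absolute statement is where I expect the main obstacle, and it is this step that confines the final result to $n\le 6$. Here I would use a refined Abramovich--Bertram--Vakil formula: degenerating $S_n$ so as to isolate a conic or a $(-1)$-curve, the absolute polynomial $BPS_\beta^{S_n}(q)$ should decompose into a sum of relative contributions indexed by the tangency profile of curves along the degenerating divisor. The content is twofold. First, one must prove this decomposition at the level of the full $q$-refined polynomial rather than merely at $q=1$; because $BPS_\beta^{S_n}$ is defined through the genus-graded Gromov--Witten theory of $S_n\times\PP^1$, this demands control of the $\lambda_g$-weighted higher-genus contributions through the degeneration, i.e.\ a genuine higher-genus gluing statement and not a numerical one. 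Second, one must check that the identical combinatorial decomposition governs the Welschinger numbers on the real side, so that setting $q=-1$ reproduces the real Abramovich--Bertram--Vakil recursion. For $n=6$ this reduces the absolute data on the cubic surface $S_6$ to relative data on a surface with fewer blow-up points, closing the induction throughout $n\le 6$.

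I expect the crux to be reconciling the Gromov--Witten definition of the BPS polynomials with the floor-diagram combinatorics at the refined level: both the relative conjecture and the refined Abramovich--Bertram--Vakil formula hinge on a higher-genus log Gromov--Witten/tropical correspondence that tracks the \emph{full} genus grading, and it is this genus-by-genus bookkeeping, rather than any single numerical identity, that carries the weight of the argument. Once the correspondence is in place, the specialization $q\mapsto -1$ is the formal step converting complex refined multiplicities into Welschinger signs, and extending beyond $n=6$ becomes a matter of propagating the refined recursion through additional degenerations.
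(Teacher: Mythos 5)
Your proposal follows essentially the same route as the paper's proof: relative BPS polynomials defined via $\lambda_g$-weighted relative Gromov--Witten theory, shown to equal Brugall\'e's floor diagram counts with $q$-deformed edge multiplicities, specialized at $q=-1$ to relative Welschinger counts via Brugall\'e's theorem, and then a refined Abramovich--Bertram--Vakil formula matched against its known real counterpart to recover the absolute statement for $S_6$, with the cases $n<6$ following by blow-up invariance of both sides rather than a genuine induction. The one cosmetic inaccuracy is that the relative geometry lives on the pair $(\widetilde{S}_6,\widetilde{\mathfrak{C}})$, the blow-up of six points \emph{on a conic} (same number of points, special position, with $\widetilde{\mathfrak{C}}$ a $(-2)$-curve), not on a surface with fewer blow-up points or on $S_6$ itself, since six general points lie on no conic.
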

We establish Conjecture \ref{conj_intro} for all $n \leq 6$ in Section \ref{sec_thm_main}. To do so, we first examine the blow-up $\widetilde{S}_n$ of $\mathbb{P}^2$ along $n$-points lying on a smooth conic $\mathfrak{C}$. Denoting the strict transform of $\mathfrak{C}$ by $\widetilde{\mathfrak{C}}$, we introduce relative BPS polynomials  $BPS_{g,\beta,(\mu,\nu)}^{\widetilde{S}_n/\widetilde{\mathfrak{C}}}(q)$ in Section \ref{sec: relative BPS}, employing higher genus relative Gromov--Witten theory for the pair $(\widetilde{S}_n, \widetilde{\mathfrak{C}})$. Here, $\nu = (\nu)_i$ for $1 \leq i \leq \ell(\nu)$ and $\mu = (\mu)_j$ for $1 \leq j \leq \ell(\mu)$ are ordered partitions as in \eqref{eq_mu_nu}. In a key result, Theorem \ref{thm: relative_BPS_W} we establish a relative version of our conjecture, for all $n \geq 0$:
\begin{citedthm}
\label{thm: relative bps welschinger intro}
For every $n \geq 0$, the specialization at $q=-1$ of the relative BPS polynomial $BPS_{\beta,(\mu,\nu)}^{\widetilde{S}_n/\widetilde{\mathfrak{C}}}(q)$ is equal, up to an explicit factor, to a Welschinger count 
$W_{\beta, (\mu,\nu),\mathbf{x}}^{\widetilde{S}_n/\widetilde{\mathfrak{C}}}$ of real rational curves passing through a configuration of real points $\mathbf{x}$, as defined in \S\ref{subsec:relative Welschinger counts}:
    \[ BPS_{\beta, (\mu,\nu)} ^{\widetilde{S}_n/\widetilde{\mathfrak{C}}}(-1)=
    \left( \prod_{j=1}^{\ell(\nu)} \frac{v_j}{[\nu_j]_\RR}\right)
    W_{\beta, (\mu,\nu), \mathbf{x}}^{\widetilde{S}_n/\widetilde{\mathfrak{C}}}\,, \]
where $[\nu_j]_\RR=1$ if $\nu_j$ is odd, and $[\nu_j]_\RR=2$ if $\nu_j$ is even.
\end{citedthm}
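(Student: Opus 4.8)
The plan is to compute both sides of the asserted identity through a common floor-diagram algorithm and to match the two resulting combinatorial sums termwise after the specialization $q=-1$. The geometry is tailored to this: since the $n$ blown-up points lie on the conic $\mathfrak{C}$ and we enumerate relative to $\widetilde{\mathfrak{C}}$, a suitable degeneration collapses the relative enumerative problem to a one-dimensional problem along $\widetilde{\mathfrak{C}}$, and Brugall\'e's floor diagrams relative to a conic \cite{BrugFloorconic} provide the bookkeeping for the resulting broken curves (floors and elevators) as well as for their real counterparts.

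The first step is to produce a refined floor-diagram formula for the relative BPS polynomial. Adapting the higher-genus log/tropical correspondence of \cite{Bou2019} that underlies Theorem~\ref{thm_A} to the pair $(\widetilde{S}_n,\widetilde{\mathfrak{C}})$ — the mechanism by which higher-genus relative Gromov--Witten invariants assemble into quantum multiplicities — I expect an expression of the form
\[ BPS_{\beta,(\mu,\nu)}^{\widetilde{S}_n/\widetilde{\mathfrak{C}}}(q) = \sum_{\mathcal{D}} m_q(\mathcal{D}) \,, \]
where $\mathcal{D}$ ranges over the floor diagrams of degree $\beta$ compatible with the boundary data $(\mu,\nu)$ along $\widetilde{\mathfrak{C}}$ and the point configuration $\mathbf{x}$, and the refined multiplicity $m_q(\mathcal{D})$ is a product of quantum integers $[w]_q := (q^{w/2}-q^{-w/2})/(q^{1/2}-q^{-1/2})$ over the internal weighted edges of $\mathcal{D}$, together with boundary contributions $v_j$ indexed by the parts $\nu_j$. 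The specialization at $q=1$ must recover the complex relative count, so that $m_q(\mathcal{D})$ is a genuine refinement. The second step is the specialization at $q=-1$: since $[w]_{-1}=0$ for even $w$ and $[w]_{-1}=(-1)^{(w-1)/2}$ for odd $w$, only the diagrams whose internal bounded edges all carry odd weight survive, and each surviving refined multiplicity collapses to a product of the boundary factors $v_j$ with a global sign. In parallel, I would invoke Brugall\'e's real floor-diagram theorem, which expresses $W_{\beta,(\mu,\nu),\mathbf{x}}^{\widetilde{S}_n/\widetilde{\mathfrak{C}}}$ as a signed sum over the very same set of floor diagrams weighted by a real (Welschinger) multiplicity $m_{\RR}(\mathcal{D})$, likewise supported on the all-odd-weight diagrams.

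The final and most delicate step is the termwise comparison of the two sums. Away from the boundary, the identity $m_q(\mathcal{D})\big|_{q=-1}=m_{\RR}(\mathcal{D})$ on internal edges is the classical Block--G\"ottsche/Welschinger matching recalled above, so the entire discrepancy is concentrated in the boundary marks indexed by $\nu$. I would show that each part $\nu_j$ contributes the factor $v_j$ to the refined multiplicity at $q=-1$ but only the factor $[\nu_j]_\RR$ to the real multiplicity: the value $[\nu_j]_\RR=2$ for even $\nu_j$ records the two real local branches through an even-order tangency point on the real conic $\widetilde{\mathfrak{C}}$, whereas $[\nu_j]_\RR=1$ in the odd case. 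Since $\nu$ is fixed boundary data independent of the individual diagram $\mathcal{D}$, the ratio $\prod_{j=1}^{\ell(\nu)} v_j/[\nu_j]_\RR$ is constant over the sum and factors out, yielding exactly the asserted prefactor relating $BPS_{\beta,(\mu,\nu)}^{\widetilde{S}_n/\widetilde{\mathfrak{C}}}(-1)$ to $W_{\beta,(\mu,\nu),\mathbf{x}}^{\widetilde{S}_n/\widetilde{\mathfrak{C}}}$.

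The main obstacle is this boundary bookkeeping: establishing the refined floor-diagram formula relative to $\widetilde{\mathfrak{C}}$ with the correct quantum boundary weights $v_j$, and then pinning down the signs and the even-tangency factor of $2$ in Brugall\'e's real count precisely enough that the two algorithms agree diagram by diagram. The interior matching and the vanishing of even-weight internal edges are robust, but verifying that the refined and real boundary contributions differ exactly by $v_j/[\nu_j]_\RR$ — and that the global signs are compatible — is where the genuine work lies.
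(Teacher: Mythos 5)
Your proposal follows essentially the same route as the paper's proof: first establish a refined floor-diagram formula $BPS_{\beta,(\mu,\nu)}^{\widetilde{S}_n/\widetilde{\mathfrak{C}}}(q)=\sum_\Gamma m_\Gamma(q)$ via the degeneration of $(\widetilde{S}_n,\widetilde{\mathfrak{C}})$ and the higher-genus log/degeneration machinery (the paper's Theorem \ref{thm_floor_gw} and Corollary \ref{cor_bps_floor}), then invoke Brugall\'e's real floor-diagram theorem \cite[Theorem 3.12]{BrugFloorconic} over the same set of diagrams and compare multiplicities termwise at $q=-1$, with even-weight edges killing both counts and the constant factor $\prod_{j=1}^{\ell(\nu)}\nu_j/[\nu_j]_\RR$ accounting for the mismatch between the refined boundary factors $\nu_j$ and the real factors $[\nu_j]_\RR$. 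One detail to fix: the paper's refined edge multiplicity is $[w_e]_q^2$ (squared, so that $q=1$ recovers Brugall\'e's complex multiplicity $\prod_j \nu_j\prod_e w_e^2$), hence at $q=-1$ each surviving odd-weight edge contributes $+1$ and no global sign arises, contrary to the sign bookkeeping you anticipated.
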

To prove this, we first express BPS polynomials in terms of refined counts of marked floor diagrams, as defined by Brugall\'e in \cite{BrugFloorconic}. 
These diagrams encode the combinatorial structure of curves lying in the central fiber of a degeneration of $\widetilde{S}_n$ to a union of $\PP^2$ with $m_{\beta}$-many copies of the Hirzebruch surface $\mathbb{F}_4$ and a blow-up of $\mathbb{F}_4$ at $n$-points -- see \S\ref{sec_floor} for details.
We then use that refined counts of marked floor diagrams at $q=-1$ coincide with Welschinger counts by  \cite[Theorem 3.12]{BrugFloorconic}.

In Theorem \ref{thm_refined_ABV}, we further establish a relative version of the Abramovich–Bertram–Vakil formula, which connects BPS polynomials to relative BPS polynomials in the case $n = 6$. By combining Theorem \ref{thm: relative bps welschinger intro} with this formula and its real version
\cite{BP1, BP2, IKS2}, we ultimately derive one of the main results of this paper in Theorem \ref{thm_main}:
\begin{citedthm}
\label{thm: main intro}
Conjecture \ref{conj_intro} holds for all $n \leq 6$.
\end{citedthm}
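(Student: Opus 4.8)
The overall strategy is to set up a two-step comparison: pass from the absolute BPS polynomial on $S_6$ to relative BPS polynomials via the refined Abramovich--Bertram--Vakil formula, convert these at $q=-1$ into relative Welschinger counts via Theorem \ref{thm: relative bps welschinger intro}, and finally reassemble them into $W_\beta^{S_6}$ using the real Abramovich--Bertram--Vakil formula. First I would reduce all cases $n \leq 6$ to the single case $n=6$. Realizing $S_6$ as a blow-up $\pi \colon S_6 \to S_n$ of $S_n$ at $6-n$ further general real points with exceptional curves $E_{n+1},\dots,E_6$, and setting $\tilde\beta = \pi^*\beta$, one has $\tilde\beta \cdot E_j = 0$ and $c_1(S_6)\cdot\tilde\beta = c_1(S_n)\cdot\beta$, so that $m_{\tilde\beta}=m_\beta$. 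Since a general (real) rational curve in class $\tilde\beta$ is disjoint from the additional exceptional curves, both invariants are unaffected by the blow-up, giving $BPS_{\tilde\beta}^{S_6}(q)=BPS_\beta^{S_n}(q)$ and $W_{\tilde\beta}^{S_6}=W_\beta^{S_n}$. Hence Conjecture \ref{conj_intro} for all $n \leq 6$ follows from the case $n=6$; for $n \leq 3$ this is moreover consistent with Theorem \ref{thm_A} and the Block--G\"ottsche/Welschinger correspondence.

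For $n=6$, I would apply the refined Abramovich--Bertram--Vakil formula of Theorem \ref{thm_refined_ABV} to express the absolute polynomial $BPS_\beta^{S_6}(q)$, defined using $6$ general points, as an explicit $\ZZ[q^{\pm}]$-linear combination of relative BPS polynomials $BPS_{\beta',(\mu,\nu)}^{\widetilde{S}_6/\widetilde{\mathfrak{C}}}(q)$, the sum ranging over the classes $\beta'$ and the tangency profiles $(\mu,\nu)$ arising when the point configuration is degenerated onto the conic $\mathfrak{C}$. Specializing this identity of Laurent polynomials at $q=-1$ and substituting, for each term, the value of $BPS_{\beta',(\mu,\nu)}^{\widetilde{S}_6/\widetilde{\mathfrak{C}}}(-1)$ supplied by Theorem \ref{thm: relative bps welschinger intro}, namely $\big(\prod_{j} v_j/[\nu_j]_{\RR}\big)\,W_{\beta',(\mu,\nu),\mathbf{x}}^{\widetilde{S}_6/\widetilde{\mathfrak{C}}}$, yields a closed expression for $BPS_\beta^{S_6}(-1)$ entirely in terms of relative Welschinger counts.

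It then remains to identify this expression with $W_\beta^{S_6}$. For this I would invoke the real Abramovich--Bertram--Vakil formula of \cite{BP1, BP2, IKS2}, which expresses $W_\beta^{S_6}$ as a combination of the same relative Welschinger counts $W_{\beta',(\mu,\nu),\mathbf{x}}^{\widetilde{S}_6/\widetilde{\mathfrak{C}}}$ indexed by the identical degeneration data. The theorem follows once the combination produced by the refined formula at $q=-1$ is shown to coincide, term by term, with the combination dictated by the real formula.

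The main obstacle is precisely this final matching of coefficients. One must verify that the coefficients of the refined ABV formula, evaluated at $q=-1$ and corrected by the conversion factors $\prod_{j} v_j/[\nu_j]_{\RR}$ coming from Theorem \ref{thm: relative bps welschinger intro}, reproduce exactly the multiplicities and signs of the real ABV formula. The delicate points are the parity-dependent factors $[\nu_j]_{\RR}\in\{1,2\}$ attached to the even versus odd parts of $\nu$, together with the signs generated by specializing the quantum multiplicities at $q=-1$; carefully tracking these through the recursion and confirming that they assemble into the real ABV weights is where the genuine work lies.
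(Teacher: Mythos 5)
Your proposal is correct and follows essentially the same route as the paper's proof of Theorem \ref{thm_main}: reduce to $n=6$ by blow-up invariance (Lemma \ref{lem_blow_up} on the BPS side), apply the refined Abramovich--Bertram--Vakil formula of Theorem \ref{thm_refined_ABV}, convert each relative BPS polynomial at $q=-1$ into a relative Welschinger count via Theorem \ref{thm: relative_BPS_W}, and reassemble with the real Abramovich--Bertram--Vakil formula of \cite{BP1, BP2, IKS2}.

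One remark: the step you single out as ``where the genuine work lies'' --- matching the refined ABV coefficients at $q=-1$, corrected by the factors $\prod_j \nu_j/[\nu_j]_\RR$, against the real ABV weights --- is in fact vacuous, and this is precisely why the paper's proof is short. In Theorem \ref{thm_refined_ABV} the only tangency profiles that occur are $(\emptyset,\nu_k)$ with $\nu_k$ consisting of $\beta\cdot\widetilde{\mathfrak{C}}+2k$ parts all equal to $1$; hence every $\nu_j=1$ is odd, every $[\nu_j]_\RR=1$, the conversion factor from Theorem \ref{thm: relative_BPS_W} is identically $1$, and the coefficients $\binom{\beta\cdot\widetilde{\mathfrak{C}}+2k}{k}$ appearing in the refined and real formulas coincide verbatim. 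No parity tracking or sign analysis is needed. The only genuine subtlety you gloss over is different: the relative Welschinger counts $W^{\widetilde{S}_6/\widetilde{\mathfrak{C}}}_{\beta-k\widetilde{\mathfrak{C}},(\emptyset,\nu_k),\mathbf{x}_k}$ depend on the configuration $\mathbf{x}_k$ (see \cite[\S 2.3]{BrugFloorconic}), and Theorem \ref{thm: relative_BPS_W} only provides \emph{some} generic configuration for which the $q=-1$ identity holds; the argument closes because the real ABV formula is valid for \emph{every} generic configuration, so it can be applied with the particular $\mathbf{x}_k$ produced by Theorem \ref{thm: relative_BPS_W}.
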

The proof of this result provides an effective algorithm for computing BPS polynomials for $n \leq 6$. For example, in the case $n = 6$ and $\beta = 2 c_1(S_6)$, in Example \ref{example_3240_refined} we determine the BPS polynomial
\[BPS_{\beta}^{S_6}(q) =q^{-4}+13q^{-3}+100 q^{-2}+547 q^{-1}+1918+547 q+100 q^2+13 q^3+q^4 \,,\]
which interpolates between the Gromov--Witten invariant $GW_{0,\beta}^{S_6}=3240$
at $q=1$ and the Welschinger invariant $W_\beta^{S_6}=1000$ at $q=-1$, previously computed in  \cite[\S 5.2]{GP1998} and \cite[Example 17]{IKS3} respectively. 

The methods used to prove Theorem \ref{thm: main intro} extend naturally to the cases $n = 7$ and $n = 8$. However, due to the increasing complexity of the associated floor diagram combinatorics \cite[\S6-\S7]{BrugFloorconic}, we choose not to include these cases in the present paper. Furthermore, although we formulate Conjecture \ref{conj_intro} for all $n \geq 0$, up to date an algebro-geometric description of Welschinger invariants is only available when $S_n$ is a del Pezzo surface, that is, for $n \leq 8$. As the description of Welschinger invariants requires a generic perturbation of the almost complex structure for $n\geq 9$ \cite{We, We1}, establishing the conjecture in this case would necessitate symplectic techniques.


\subsection{BPS polynomials and K-theoretic refined BPS invariants}

G\"ottsche–Shende  conjectured a connection between Block–G\"ottsche polynomials and the Hirzebruch genera of Hilbert schemes of points on universal curves \cite{GSrefined}. 
In Conjecture \ref{conj_DT}, we propose a version of this conjecture in the broader context of BPS polynomials.

Given a surface $S$, the total space $K_S$ of the canonical line bundle of $S$ is a non-compact Calabi--Yau 3-fold equiped with a $\CC^\star$-action scaling the fibers of the projection $K_S \rightarrow S$. The $K$-theoretic refined genus $0$ BPS invariant $\Omega_\beta^{K_S}(q) \in \ZZ[q^{\pm}]$ of $K_S$ with $m_\beta$ point insertions can be defined by $\CC^\star$-localization using moduli spaces of one-dimensional sheaves on $K_S$ with $m_\beta$ insertions pulled-back from points in $S$. Instead, one could use moduli spaces of stable pairs on $K_S$, and conjecturally extract the same invariants \cite{afgani2020refinements, NO, Thomas_K_VW, thomas2024refined}. 
\begin{citedconj}
    Let $S$ be a smooth projective surface, and $\beta \in H_2(S,\ZZ)$ such that $m_\beta:=-1+c_1(S)\cdot \beta \geq 0$. Then, the BPS polynomial $BPS_\beta^S(q)$ of $S$ coincides with the refined genus $0$ BPS invariant of $K_S$ with $m_\beta$ point insertions:
    \[ BPS_\beta^S(q) = \Omega_\beta^{K_S}(q)\,. \]
\end{citedconj}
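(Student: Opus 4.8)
The plan is to prove the conjecture by reinterpreting both sides as generating functions built from a single geometric object --- the perverse (Lefschetz) filtration on the cohomology of the moduli space $\M_\beta$ of one-dimensional stable sheaves $F$ on $S$ with $[F]=\beta$ and $\chi(F)=1$ (the choice of $\chi$ being immaterial by $\chi$-independence), cut down by the $m_\beta$ point insertions --- and then matching the two repackagings. In the language of the Maulik--Toda proposal for Gopakumar--Vafa invariants, the support map $\pi\colon \M_\beta \to B_\beta$ to the base of curve classes induces a perverse filtration on $H^\bullet(\M_\beta)$, and both the higher-genus BPS invariants of $S \times \PP^1$ and the refined sheaf-counting invariants of $K_S$ should be extracted from the same graded pieces ${}^p H^\bullet(\M_\beta)$. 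The whole argument is therefore an instance of a refined Gopakumar--Vafa correspondence, the two sides of the conjecture being the two equivalent ways of reading off the perverse numbers.

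For the right-hand side, I would first use the $\CC^\star$-action scaling the fibers of $K_S \to S$ to reduce the sheaf count. By $\CC^\star$-localization, one-dimensional sheaves on $K_S$ fixed by the torus are scheme-theoretically supported on the zero section $S$, so the relevant fixed locus is precisely $\M_\beta$, and the insertions pulled back from points of $S$ cut it down to the expected dimension. The refined invariant $\Omega_\beta^{K_S}(q)$ then becomes the Hirzebruch-type, perverse-graded polynomial of $\M_\beta$, with $q$ tracking the perverse degree. This step is essentially the definition of the refined invariant in the K-theoretic framework, and its clean behaviour under point insertions would rely on the $\chi$-independence of the perverse cohomology of $\M_\beta$ established by Maulik--Shen.

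For the left-hand side, I would start from the simplified Gromov--Witten/pairs correspondence already recorded in the excerpt, namely $\sum_{\chi} PT_{\beta,\chi,\gamma_\beta}^{S \times \PP^1}(-q)^{\chi} = -q(1-q)^{m_\beta-1} BPS_\beta^S(q)$, in order to pass from $BPS_\beta^S(q)$ to stable pairs on $S \times \PP^1$. Because the curve class is $(\beta,0)$ and one insertion is a point of $\PP^1$, the supporting curves of these pairs lie in a single fibre $S \times \{\pt\}$, so a dimensional-reduction argument identifies the stable-pairs moduli on $S \times \PP^1$ (after the point constraint) with a relative Hilbert scheme of points on the universal curve over $\M_\beta$. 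Feeding this into the Maulik--Toda description of the BPS invariants then expresses $BPS_{g,\beta,\gamma}^{S \times \PP^1}$, and hence $BPS_\beta^S(q)$ through the packaging by $(-1)^g(q-2+q^{-1})^g$, in terms of the very same perverse filtration on $H^\bullet(\M_\beta)$.

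It then remains to check that the genus variable $q-2+q^{-1}$ on the Gromov--Witten side and the $\CC^\star$-weight variable on the sheaf side record the perverse numbers in the same way, which is a bookkeeping comparison of two Lefschetz-$\mathfrak{sl}_2$ gradings. I expect the genuine obstacle to be the second step together with the third: proving the underlying refined Gopakumar--Vafa correspondence for $S \times \PP^1$ --- that the Gromov--Witten-defined integers $BPS_{g,\beta,\gamma}^{S \times \PP^1}$ truly coincide with the cohomologically defined perverse invariants of $\M_\beta$ --- is open in general and presently available only in special cases through the Maulik--Toda and Maulik--Shen machinery. A feasible first milestone, giving strong evidence and covering the toric del Pezzo cases of Theorem \ref{thm_A}, would be to verify the identity directly via the refined topological vertex, where both $BPS_\beta^S(q)$ and $\Omega_\beta^{K_S}(q)$ admit explicit combinatorial evaluations.
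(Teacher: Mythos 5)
The statement you set out to prove is Conjecture D (Conjecture \ref{conj_DT}) of the paper, and the authors do not prove it. What the paper offers instead is (i) a physics heuristic: both sides should be the Nekrasov--Shatashvili limit $\epsilon_1 \to 0$ of a conjectural $(\CC^\star)^2$-equivariant theory of the Calabi--Yau 5-fold $K_S \times \CC^2$, read off either on the 3-fold $K_S$ (giving $\Omega_\beta^{K_S}(q)$) or on the $\CC^\star$-fixed 3-fold $S \times \CC$ (giving $BPS_\beta^S(q)$); and (ii) evidence from special cases: the KKV formula for K3 surfaces, analogous statements for abelian surfaces, and the theorem proving the insertion-free version for $S=\PP^2$. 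So there is no proof in the paper to compare yours against, and any argument claiming to settle the statement in full generality should be regarded as overreaching.

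Your proposal is, appropriately, a strategy rather than a proof, and you correctly identify one central open step: that the Gromov--Witten-defined integers $BPS_{g,(\beta,0),\gamma_\beta}^{S\times\PP^1}$ agree with the perverse-filtration (Maulik--Toda) invariants of the moduli space $\M_\beta$ of one-dimensional sheaves. That refined Gopakumar--Vafa correspondence is open in general, so your chain of reductions does not close. But there is a second gap you do not flag: your first step treats $\Omega_\beta^{K_S}(q)$ as if it were, by definition, the perverse-graded cohomological polynomial of $\M_\beta$. It is not. The paper defines it K-theoretically, as a $\CC^\star$-equivariant Euler characteristic of the Nekrasov--Okounkov twisted virtual structure sheaf with tautological insertions; identifying this K-theoretic refinement with a cohomological or perverse refinement is itself a conjecture of the same difficulty (known in special cases such as local K3 surfaces by work of Thomas), so even granting Maulik--Toda for $S \times \PP^1$ your two ``repackagings'' land in refinements that are only conjecturally equal. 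Relatedly, your localization claim that torus-fixed one-dimensional sheaves on $K_S$ are scheme-theoretically supported on the zero section is false in general: under the spectral correspondence such sheaves are pairs $(E,\phi\colon E \to E \otimes K_S)$ on $S$ with $\phi$ of pure positive weight, and fixed stable sheaves with $\phi \neq 0$, whose supports are thickened in the fiber direction, can contribute to the fixed locus. Your closing suggestion to test the toric del Pezzo cases via the refined vertex is reasonable and consistent with the paper's Theorem \ref{thm_absolute_relative}, but it is evidence, not a proof.
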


This conjecture predicts a surprising relation between Gromov--Witten theory of $S \times \PP^1$, or equivalently the $\CC^\star$-equivariant Gromov--Witten theory of $S \times \CC$, and refined sheaf counting on $K_S$.
In \S\ref{section_refined_dt}, we argue that this conjecture should arise from the hypothetical existence of a 
$(\CC^\star)^2$-equivariant enumerative theory of the Calabi--Yau 5-fold $K_S \times \CC^2$ as in \cite{brini2024refined, NO}.
Additionally, we describe how known results about K3 surfaces and abelian surfaces provide evidence for natural extensions of this conjecture.
Finally in \S\ref{sec_K3_real}, we explore a connection between BPS polynomials and Welschinger invariants in the context of K3 surfaces.

\subsection{Related works}
Since Brugall\'e--Mikhalkin introduced floor diagrams for toric surfaces \cite{EM1_floor, EM2_floor}, these diagrams have been extensively used to enumerate curves in toric settings -- see for instance \cite{ABLM, bousseau_floor, CMR_floor, CJMR}. 
In this paper, we use refined counts of a version of floor diagrams relative to a conic as introduced by Brugalle in \cite{BrugFloorconic}, which concerns the non-toric geometry $(\widetilde{S}_n,\widetilde{\mathfrak{C}})$, to compute BPS polynomials.

In the toric situation, Mikhalkin also studied a particular class of Block--G\"ottsche polynomials and showed that they admit an interpretation in terms of counts of real curves weighted by a quantum index \cite{mikhalkin_quantum} -- see also \cite{MR4693576, itenberg2023real, itenberg2024real} for further generalizations. It is not obvious how such counts might be generalized beyond toric cases, and if they could be then related to the BPS polynomials defined in our paper in terms of higher genus Gromov--Witten invariants.

Brett Parker has provided in \cite{parker} an algorithm for computing the Gromov–Witten invariants $GW_{0,\beta}^{S_n}$ of $S_n$ for any $n \geq 0$. 
Building upon \cite{Bou2019, lo2022thesis}, a refined version of this algorithm can be obtained to determine the BPS polynomials of $S_n$ for all $n \geq 0$. 
This will be discussed further in future work.

\subsection{Acknowledgement}
We thank Erwan Brugall\'e for many helpful discussions and email exchanges related to \cite{BrugFloorconic, brugalle2021invariance} and Example \ref{example_3240_refined}. We also thank Ilia Itenberg for bringing to our attention works on real invariants of K$3$ surfaces, and Richard Thomas for very useful exchanges on his work on refined sheaf counting on local K$3$ surfaces. The last section of our paper was completed during the MIST workshop at the Chinese University of Hong Kong. We thank Conan Leung and other organizers for their hospitality. H\"ulya Arg\"uz is supported by the NSF grant DMS-2302116, and Pierrick Bousseau is supported by the NSF grant DMS-2302117.






\section{Complex, real, and refined counts}
\label{section_complex_real_refined}

In this section, we begin with a brief overview of curve counting theories in surfaces over both complex and real numbers. 
We then describe the refined curve counts given by the Block--G\"ottsche polynomials defined using tropical geometry.

\subsection{Counting rational curves in complex surfaces}
\label{section_complex}
Let $S$ be a smooth projective surface over $\CC$. For every $m \in \ZZ_{\geq 0}$ and $\beta \in H_2(S,\ZZ)$, the moduli space $\overline{M}_{0,m}(S,\beta)$ of $m$-pointed genus zero stable maps to $S$ of class $\beta$ is a proper Deligne--Mumford stack. 
It carries a virtual fundamental class $[\overline{M}_{0,m}(S,\beta)]^{\vir}$ of dimension $-1+c_1(S) \cdot \beta+m$ \cite{berhend1997gw, fulton1996notes}.

From this point onward, we assume that $m_\beta:= -1+c_1(S) \cdot \beta$ is nonnegative. 
The \emph{Gromov--Witten invariant} $GW_{0,\beta}^S \in \QQ$ is defined by imposing $m_\beta$ point constraints on the rational curves of class $\beta$, that is,
\begin{equation}  \label{eq_gw}
GW_{0,\beta}^S \coloneqq \int_{[\overline{M}_{0,m_\beta}(S,\beta)]^{\vir}} \prod_{i=1}^{m_\beta}  \ev_i^* (\pt_S) \,, \end{equation}
where $\ev_i : \overline{M}_{0,m_\beta}(S,\beta) \to S$ is the evaluation map at the $i$-the marked point, and $\pt_S \in H^4(S,\ZZ)$ is the Poincar\'e dual class of a point in $S$.

For every $n \in \ZZ_{\geq 0}$, we denote by $S_n$ a smooth projective surface obtained by blowing up $n$ general points in the complex projective plane $\PP^2$. 
By deformation invariance in Gromov--Witten theory, the Gromov--Witten invariants $GW_{0,\beta}^{S_n}$ do not depend on the specific configuration of blown-up points. We have $H_2(S_n,\ZZ)=\ZZ H \oplus \bigoplus_{i=1}^n \ZZ E_i$,
where $H$ is the pullback of the class of a line in $\PP^2$, and $E_i$ are the classes of the exceptional curves of the blow-up $S_n \rightarrow \PP^2$.
Since $c_1(S_n)=3H-\sum_{i=1}^n E_i$, for every $\beta=d H -\sum_{i=1}^n a_i E_i$, we obtain
\begin{equation} \label{eq_m_beta}
m_\beta = 3d -\sum_{i=1}^n a_i -1\,.\end{equation}

According to \cite[\S 4]{GP1998}, there exist only finitely many genus zero stable maps to $S_n$ of class $\beta$ that pass through $m_\beta$ points in general position, and $GW_{0,\beta}^{S_n}$ is a count of these stable maps with integer multiplicities. In particular, the Gromov--Witten invariants $GW_{0,\beta}^{S_n}$ are nonnegative integers. By \cite[\S 3]{GP1998}, they can be recursively computed using the WDVV equation, that is, the associativity of the quantum product.
When $S_n$ is a del Pezzo surface, that is, for $n \leq 8$, it is proven in \cite[\S 4]{GP1998} that the Gromov--Witten invariants $GW_{0,\beta}^{S_n}$ are enumerative: every genus zero stable map to $S_n$ of class $\beta$ passing through $m_\beta$ points in general position is an immersion
$\PP^1 \rightarrow S_n$ with at worst nodal image, and $GW_{0,\beta}^{S_n}$ is equal to the number of these maps all counted with multiplicity one.

\begin{example} \label{ex_cubic_complex}
    Let $n=0$, so that $S_n=\PP^2$, and $\beta=3H$. Then, $GW_{0,\beta}^{S_0}$ is the number of rational cubic curves in $\PP^2$ passing through $m_\beta=8$ general points. It is well known that $GW_{0,\beta}^{S_0}=12$.
\end{example}

\begin{example} \label{ex_3240_complex}
    Let $n=6$, so that $S_6$ is a smooth cubic surface in $\PP^3$, and $\beta=2c_1(S_6)=6H-2\sum_{i=1}^6 E_i$. Then, we have $m_\beta = 11$ by \eqref{eq_m_beta}, and $GW_{0,\beta}^{S_6}=3240$ by \cite[\S 5.2]{GP1998}.
\end{example}

\subsection{Counting rational curves in real surfaces}
\label{section_real}

As in \S\ref{section_complex}, let $S_n$ be a smooth projective surface over $\CC$ obtained by blowing up $n$ general points in the complex projective plane $\PP^2$. 
Let $\iota$ be a \emph{real structure} on $S_n$, that is, an anti-holomorphic involution $\iota:S_n \rightarrow S_n$. We review the definition given by Welschinger \cite{We,We1} of signed counts of real curves in the real surface $(S_n, \iota)$.

Let $\beta \in H_2(S_n,\beta)$ be a curve class such that $m_\beta:= -1+c_1(S_n) \cdot \beta \geq 0$.
Let $x=(x_i)_{1 \leq i \leq m_\beta}$ be a real configuration of $m_\beta$ general points in $S_n$, that is a subset of $S_n$ invariant by $\iota$. 
The set $x$ is a union of $r$ real points, that is fixed by $\iota$, and of $s$ pairs of distinct complex conjugated points, that is permuted by $\iota$, so that $r+2s=m_\beta$.
Let $\omega$ be a standard symplectic structure on $S_n$, 
induced by the presentation of $S_n$ as a symplectic blow-up of $\PP^2$ equipped with the Fubini-Study symplectic form.
 An almost complex structure $J$ on $S_n$ is called $\iota$-compatible if $\iota$ is $J$-anti-holomorphic. When $J$ is $\iota$-compatible, the involution $\iota$ defines an action of $\ZZ/2\ZZ$ on the set of $J$-pseudo-holomorphic curves $C \rightarrow S_n$. Fixed points of this action are referred to as \emph{real $J$-pseudo-holomorphic curves}.

 According to \cite[Theorem 1.11]{We1}, if $J$ is a 
 sufficiently generic $\omega$-tame $\iota$-compatible almost complex structure, the set 
 $M_{J,x}$ of genus zero $J$-pseudo-holomorphic curve $f: C \rightarrow S_n$ of class $\beta$ passing through $x$ is finite. Moreover, for every $(f: C \rightarrow S_n) \in M_{J,x}$, we have $C \simeq \PP^1$, the map $f$ is an immersion, and the image $f(C)$ has at worst nodal singularities.
 By the comparison between algebraic and symplectic Gromov--Witten invariants \cite{li_tian_gw, siebert_gw}, the cardinality of $M_{J,x}$ coincides with the Gromov--Witten invariant $GW_{0,\beta}^{S_n}$ defined in \S \ref{section_complex}. However, the cardinality of the set $M_{J,x}(\RR)$ of real $J$-pseudo-holomorphic curves depends in general on $J$ and on $x$. Welschinger addressed this issue by introducing the following signed count.

For every $(f:  C \rightarrow S_n) \in M_{J,x}(\RR)$, a real node of $f(C)$ is either isolated, locally isomorphic to $x^2+y^2=0$, or the intersection of two transverse real branches, locally isomorphic to $x^2-y^2=0$. The \emph{Welschinger sign} $w(f) \in \{\pm 1\}$ of $f$ is defined by 
\[ w(f)=(-1)^{m(f)}\,,\]
where $m(f)$ denotes the number of isolated real nodes in $f(C)$.
By \cite[Theorem 1]{We1}, the signed count 
\[ W_{\beta, (r,s)}^{(S_n,\iota)}:= \sum_{f \in M_{J,x}(\RR)} w(f)\,,\]
of real $J$-pseudo-holomorphic curves is independent of $J$, and only depends on the real configuration $x$ via the numbers $(r,s)$ of real points and pairs of complex conjugated points in $x$ respectively. This signed count is also independent of the choice of the standard symplectic form $\omega$, as the space of standard symplectic forms is connected. We refer to  $W_{\beta, (r,s)}^{(S_n,\iota)}$ as the \emph{Welschinger invariants} of the real surface $(S_n, \iota)$.
By \cite{horev2012open} -- see also \cite[\S 5]{chen_zinger_wdvv}, they can be recursively computed using the open WDVV equation proved in \cite{chen_open_wdvv}. 
When $S_n$ is a del Pezzo surface, that is, for $n \leq 8$, then, as reviewed in \S \ref{section_complex}, the standard complex structure on $S_n$ can be chosen as $J$. Consequently, the Welschinger invariants can be defined algebro-geometrically as signed counts of real stable maps in this case -- see \cite{W_revisited}. 

Throughout this paper, we assume that $S_n$ is obtained from $\PP^2$ by blowing up $n$ general real points, and that $\iota$ is the corresponding \emph{standard real structure}.
In this case, the real locus $S_n(\RR)$ is diffeomorphic to the connected sum of $n+1$ copies of $\RR\PP^2$, that is, the unique compact non-orientable topological surface of Euler characteristic $1-n$. 
We also focus on the case of \emph{purely real} configurations of points, that is, with $r=m_\beta$ and $s=0$. To simplify the notation, we denote by 
\begin{equation}\label{eq_W}
W_\beta^{S_n}\end{equation}
the corresponding Welschinger invariants.

\begin{example} \label{ex_cubic_real}
    Let $n=0$, so that $S_n=\PP^2$, and $\beta=3H$. Then, we have $W_\beta^{S_0}=8$.
\end{example}

\begin{example} \label{ex_3240_real}
    Let $n=6$, so that $S_6$ is a smooth cubic surface in $\PP^3$, and $\beta=2c_1(S_6)=6H-2\sum_{i=1}^6 E_i$. Then, we have $W_\beta^{S_6}=1000$ by \cite[Example 17]{IKS3} -- see also \cite[\S 9]{horev2012open}.
\end{example}

\subsection{Refined tropical curve counting in toric surfaces}
\label{section_BG}

Let $S$ be a projective toric surface. Let $\rho_1, \dots, \rho_\ell$ be the rays of the fan of $S$ in $\RR^2$, with integral primitive directions $m_1, \dots, m_\ell \in \ZZ^2$, corresponding to the toric divisors $D_1, \dots, D_\ell$ in $S$. 
Consider a non-zero curve class 
$
\beta \in H_2(S,\ZZ)$ 
such that $\beta \cdot D_j\geq 0$ for all $1\leq j \leq n$.
By standard toric geometry, the balancing condition $\sum_{j=1}^\ell (\beta \cdot D_j) m_j=0$ is satisfied. 
In particular, there exist at least two toric divisors $D_j$ such that $\beta \cdot D_j \geq 1$, so we have $c_1(S) \cdot \beta \geq 2$, and we obtain
\begin{equation} \label{eq_positive_m}
     m_\beta:= -1+c_1(S) \cdot \beta\geq 1 \,.
\end{equation}

Given a set $\mathbf{P}$ of $m_\beta$ points in $\RR^2$,
consider the set $T_{\beta,\mathbf{P}}$ of parametrized rational tropical curves $h: \Gamma \rightarrow \RR^2$, with $\beta \cdot D_i$ unbounded edges of direction $m_i$ and multiplicity $1$ for all $1 \leq i \leq \ell$, and passing through $\mathbf{P}$. 
By \cite[Proposition 4.13]{Mikhalkin2005}, for general enough $\mathbf{P}$, the set $T_{\beta,\mathbf{P}}$ is finite. 
Moreover, for every $h: \Gamma \rightarrow \RR^2$ in $T_{\beta, \mathbf{P}}$, the domain graph $\Gamma$ is 3-valent. 
By \cite[Definition 2.16]{Mikhalkin2005}, the multiplicity of a vertex $v$ of $\Gamma$ is defined as
\[ m_v := |\det(w_{e_1}u_{e_1}, w_{e_2} u_{e_2})|\,,\]
where $e_1$, $e_2$, $e_3$ are the edges of $\Gamma$ incident to $v$, with weights $w_{e_1}$, $w_{e_2}$, $w_{e_3}$, and primitive integral direction vectors $u_{e_1}, u_{e_2}, u_{e_3}$ pointing outwards $v$. 
By the balancing condition, we have $\sum_{i=1}^3 w_{e_i} u_{e_i}=0$, and so $m_v$ is well-defined, independently of the choice of $e_1$ and $e_2$ among $e_1, e_2, e_3$.
Following 
\cite[Definition 3.5]{BG}, the refined multiplicity of $v$ is defined as the $q$-integer version of $m_v$:
\begin{equation} \label{eq_q_integer} 
[m_v]_q : = \frac{q^{\frac{m_v}{2}}-q^{-\frac{m_v}{2}}}{q^{\frac{1}{2}} - q^{-\frac{1}{2}}} = q^{-\frac{m_v-1}{2}}
\sum_{j=0}^{m_v-1} q^j\in \ZZ_{\geq 0}[q^{\pm \frac{1}{2}
}] \end{equation}
and the Block-G\"ottsche refined multiplicity of $h: \Gamma \rightarrow \RR^2$ as 
\[ BG_h(q):= \prod_{v} [m_v]_q\,,\]
where the product is over the vertices of $\Gamma$.
The Block-G\"ottsche polynomial is defined as 
\[ BG_{\beta, \mathbf{P}}^S(q) = \sum_{h \in T_{\beta, \mathbf{P}}} BG_h(q) \in \ZZ_{\geq 0}[q^{\pm \frac{1}{2}}] \,.\]
By \cite[Theorem 1]{MR3142257}, $BG_{\beta, \mathbf{P}}(q)$ is tropically deformation invariant, that is, does not depend on the particular general configuration  $\mathbf{P}$ of points in $\RR^2$. 
Consequently, we denote the Block-G\"ottsche polynomial simply as $BG_{\beta}^S(q)$ in what follows.

By the complex tropical correspondence theorem  \cite[Theorem 1]{Mikhalkin2005} -- see also \cite{NS}, the value of the Block-G\"ottsche polynomial at $q=1$ is the number of genus zero stable maps to $S$, of class $\beta$, that pass through a general configuration of $m_\beta$ points and do not contain any torus fixed point of $S$.
Similarly, by the real tropical correspondence theorem \cite[Theorem 6]{Mikhalkin2005}, the value of the Block-G\"ottsche polynomial at $q=-1$ equals the number of real genus zero stable maps to $S$, of class $\beta$, that pass through a general configuration of $m_\beta$ real points and do not contain any torus fixed point of $S$. Thus, the Block-G\"ottsche polynomials interpolate between counts of real and complex curves.

As in \S\ref{section_complex}, let $S_n$ be the blow-up of $\PP^2$ at $n$ general points. If $n \leq 3$, then $S_n$ is a toric surface, and so the Block-G\"ottsche polynomials $BG_{\beta}^S(q)$ are defined as above. Moreover, it follows from the enumerativity result of \cite[\S 3]{GP1998} reviewed in \S\ref{section_complex} that all the stable maps contributing to the Gromov--Witten invariants $GW_{0,\beta}^{S_n}$ and Welschinger invariants $W_{\beta}^{S_n}$ avoid the torus fixed points of $S_n$. Consequently, we obtain 
\begin{equation} \label{eq_BG_interpolation}
BG_\beta^{S_n}(1)=GW_{0,\beta}^{S_n} \,\,\, \text{and} \,\,\, BG_\beta^{S_n}(-1)=W_{\beta}^{S_n}\,. \end{equation}
Therefore, the Block-G\"ottsche polynomials remarkably interpolate between the Gromov--Witten and Welschinger invariants reviewed in \S \ref{section_complex} and \S \ref{section_real} respectively.

\begin{example}
Let $n=0$, so that $S_n=\PP^2$, and $\beta=3H$. Then, we have 
\[BG_\beta^{S_0}(q)=q^{-1}+10+q\,,\] 
interpolating between $GW_{0,\beta}^{S_0}=12$ at $q=1$ (see Example \ref{ex_cubic_complex} ) and $W_\beta^{S_0}=8$ at $q=-1$ (see Example \ref{ex_cubic_real}).
\end{example}

\begin{example}
    Let $n=6$, so that $S_6$ is a smooth cubic surface in $\PP^3$, and $\beta=2c_1(S_6)=6H-2\sum_{i=1}^6 E_i$. Since $n=6>3$, the surface $S_6$ is not toric, and thus the corresponding Block-G\"ottsche polynomial is not defined. 
    In the following section \S\ref{section_bps}, 
    we define BPS polynomials, which generalize Block-G\"ottsche polynomials to arbitrary values of $n$. The BPS polynomial for this example, interpolating between $GW_{0,\beta}^{S_6}=3240$ as in Example 
    \ref{ex_3240_complex} and $W_\beta^{S_6}=1000$ as in Example \ref{ex_3240_real}, is calculated in Example \ref{example_3240_refined}.
\end{example}

\section{BPS polynomials}
\label{section_bps}

In \S\ref{section_BPS_polynomials_3folds}-\ref{section_bps_surfaces}, we introduce BPS polynomials of a surface $S$ using higher genus Gromov--Witten theory of the 3-fold $S \times \PP^1$.
Building on the main result of \cite{Bou2019}, which provides an interpretation of Block-G\"ottsche polynomials in terms of higher genus log Gromov--Witten invariants, we prove in \S\ref{section_BPS_BG} that the BPS polynomials recover Block-G\"ottsche polynomial when $S$ is a toric del Pezzo surface. 
Finally, in \S \ref{section_conjecture}, we conjecture that for any $n$, the specialization at $q=-1$ of the BPS polynomials of the surface $S_n$ -- the blow-up of $\PP^2$ at $n$ general points -- coincides with the Welschinger invariants $W_\beta^{S_n}$. Afterwards in 
\S \ref{section_bps_welschinger}, we prove this conjecture for $n \leq 6$.

\subsection{BPS polynomials of 3-folds}
\label{section_BPS_polynomials_3folds}
While the primary focus of this paper is the enumerative geometry of a surface $S$, the definition of BPS polynomials given in \S\ref{section_bps_surfaces} is formulated in terms of the enumerative geometry of the 3-fold $S \times \PP^1$. We begin by reviewing key aspects of curve counting in $3$-folds, then introduce the concept of BPS polynomials within this broader context.

\subsubsection{BPS polynomials and Gromov--Witten invariants}
Let $X$ be a smooth projective 3-fold over $\CC$. For every $g \in \ZZ_{\geq 0}$, curve class $\beta \in H_2(X.\ZZ)$, and cohomology classes $\gamma = (\gamma_1, \dots, \gamma_k)$ with $\gamma_i \in H^\star(X,\ZZ)$, the genus $g$ \emph{Gromov--Witten invariant} of $X$, of class $\beta$ and with insertion of $\gamma$, is defined as
\begin{equation} \label{eq_gw_X}
GW_{g,\beta,\gamma}^X
:= \int_{[\overline{M}_{g,k}(X,\beta)]^{\vir}}\prod_{i=1}^k \mathrm{ev}_i^\star(\gamma_i) \in \QQ\,, 
\end{equation}
where $\overline{M}_{g,k}(X,\beta)$ is the moduli space of $k$-pointed genus $g$ stable maps to $X$ of class $\beta$, $\mathrm{ev}_i: \overline{M}_{g,k}(X,\beta) \rightarrow X$ is the evaluation map at the $i$-th marked point, and $[\overline{M}_{g,k}(X,\beta)]^{\vir}$ is the virtual fundamental class, which is of dimension $c_1(X) \cdot \beta +k$.

From now on, we assume that $\beta$ is a \emph{Fano class}, meaning that $c_1(X) \cdot \beta >0$. In this case, following \cite[\S 0.4]{Pandh1999}\cite[\S 3]{Pandha3questions}, 
the \emph{Gopakumar--Vafa BPS invariants} $BPS_{g,\beta,\gamma}\in \QQ$ are defined by the formula
\begin{equation}
\label{Eq: gw_bps}
\sum_{g \geq 0} GW_{g,\beta,\gamma}^X u^{2g-2+c_1(X) \cdot \beta} = \sum_{g\geq 0} BPS_{g,\beta, \gamma}^X
\left( 2\sin \left( \frac{u}{2} \right) \right)^{2g-2+c_1(X)\cdot \beta} \,.\end{equation}
It is shown in \cite[Theorem 1.5]{zinger2011} by symplectic methods that the BPS invariants are integers: $BPS_{g,\beta,\gamma}^X \in \ZZ$.
Furthermore, by \cite[Corollary 1.16]{DW23}, for fixed $\beta$ and $\gamma$, we have $BPS_{g,\beta, \gamma}^X=0$ for large enough $g$. 
Therefore, we can define a Laurent polynomial in the variable $q=e^{iu}$ as follows.
\begin{definition}
\label{def_bps_3fold}
Let $X$ be a smooth projective $3$-fold, $\beta \in H_2(X, \ZZ)$ a Fano class, and $\gamma = (\gamma_1, \dots, \gamma_k)$ with $\gamma_i \in H^\star(X,\ZZ)$. Then, the \emph{BPS polynomial} of $X$ of class $\beta$ and with insertion of $\gamma$ is the Laurent polynomial defined by
\begin{align} 
\label{Eq:BPS}
BPS_{\beta, \gamma}^X(q)
:=& \sum_{g \geq 0} BPS_{g,\beta,\gamma}^X
\left( 2 \sin \left( \frac{u}{2} \right) \right)^{2g}
=\sum_{g \geq 0} BPS_{g,\beta, \gamma}
(-1)^g (q^{\frac{1}{2}} - q^{-\frac{1}{2}})^{2g} \\
\nonumber
=&\sum_{g \geq 0} BPS_{g,\beta, \gamma}^X
(-1)^g (q-2+q^{-1})^{g} \in \ZZ[q^{\pm}]\,.
\end{align}
\end{definition}

By Definition \ref{def_bps_3fold}, the BPS polynomials $BPS_{\beta,\gamma}^X(q)$ are Laurent polynomials in $q$ which are symmetric under $q \mapsto q^{-1}$.
An equivalent description of the BPS polynomials is obtained by rewriting \eqref{Eq:BPS} using \eqref{Eq: gw_bps}: 
\begin{equation}
\label{Eq:alternativeBPS}
BPS_{\beta,\gamma}^X(q)=
\left(2 \sin \left( \frac{u}{2}\right) \right)^{2-c_1(X)\cdot \beta}\sum_{g \geq 0} GW_{g,\beta, \gamma}^X u^{2g-2+c_1(X) \cdot \beta}\,.
\end{equation}

Although we initally defined  BPS polynomials in terms of Gromov--Witten invariants, in what follows we note that they can also be described using unramified Gromov--Witten invariants.

\subsubsection{BPS polynomials and unramified Gromov--Witten invariants}

Let $X$ be a smooth projective $3$-fold, $\beta \in H_2(X, \ZZ)$ a Fano class, and $\gamma = (\gamma_1, \dots, \gamma_k)$ with $\gamma_i \in H^\star(X,\ZZ)$. By the main result of \cite{nesterov2024unramified}, the BPS invariants $BPS_{g,\beta,\gamma}^X$ are equal to the \emph{unramified Gromov--Witten invariants} of $X$, defined by Kim--Kresh--Oh \cite{MR3118390}:
\begin{equation} \label{eq_unramified}
BPS_{g,\beta,\gamma}^X =  \int_{[\overline{M}^{\mathrm{un}}_{g,k}(X,\beta)]^{\vir}}\prod_{i=1}^k \mathrm{ev}_i^\star(\gamma_i)\,,\end{equation}
where $[\overline{M}^{\mathrm{un}}_{g,k}(X,\beta)$ is a moduli space of unramified stable maps to iterated blow-ups of points of $X$ -- see \cite{MR3118390, nesterov2024unramified} for details and \cite[\S 5 1/2]{MR3221298} for a brief exposition. The description of BPS invariants by unramified Gromov--Witten invariants has the advantage of avoiding the change of variables $q=e^{iu}$. However, it is currently not known how to prove the integrality $BPS_{g,\beta,\gamma}^X \in \ZZ$ and the vanishing $BPS_{g,\beta,\gamma}^X=0$ for large enough $g$ directly from the definition via unramified Gromov--Witten invariants -- see \cite[\S 1.3]{nesterov2024unramified}.

\subsection{BPS polynomials of surfaces}
\label{section_bps_surfaces}
\subsubsection{BPS polynomials and surface Gromov--Witten invariants}
Let $S$ be a smooth projective surface over $\CC$. In this section, we define BPS polynomials of $S$ as particular BPS polynomials of the 3-fold $X=S \times \PP^1$ defined in Definition \ref{def_bps_3fold}.

To do this, we consider the natural projections:
\begin{equation}
\label{eq:projections}
\tikzstyle{line} = [draw, -latex']
\begin{tikzpicture}
    \node[] at (6,0) (Step 1) { $X=S\times \PP^1$};
      \node[] at (4,-1.6) (Step 2) {$S$};
       \node[] at (8,-1.6) (Step 3) {$\PP^1$};
   \path [line] (Step 1) -- node [text width=0.5cm,midway,above ] {$\pi_S$} (Step 2);
 \path [line] (Step 1) -- node [text width=0.2cm,midway,above ] {$\pi_{\PP^1}$} (Step 3);
\end{tikzpicture}    
\end{equation}
and for every $\beta \in H_2(S,\ZZ)$ such that $m_\beta:=-1+c_1(S) \cdot \beta \geq 0$, we define a $(m_\beta+1)$-tuple $\gamma_\beta=(\gamma_{\beta, i})_{0 \leq i \leq m_\beta}$ of cohomology classes $\gamma_{\beta,i} \in H^\star(X,\ZZ)$ by
\[ \gamma_{\beta,0}:= \pi_{\PP^1}^*(\pt_{\PP^1}) \in H^2(X,\ZZ) \,\,\, \text{and}\,\,\,
\gamma_{\beta,i}:= \pi_S^*(\pt_S) \in H^4(X,\ZZ)\,\, \, \text{for all}\,\, 1 \leq i \leq m_\beta \,,\]
where $\pt_{\PP^1} \in H^2(\PP^1,\ZZ)$ is the Poincar\'e dual class of a point in $\PP^1$, and 
$\pt_{\PP^1} \in H^2(S,\ZZ)$ is the Poincar\'e dual class of a point in $S$.
Moreover, the curve class $(\beta,0)\in H_2(X,\ZZ)=H_2(S,\ZZ) \times \ZZ$ is a Fano class on $X$ since we have $c_1(X) \cdot (\beta,0)=c_1(S) \cdot \beta \geq 1$ by the assumption $m_\beta \geq 0$ on $\beta$. Therefore, Definition \ref{def_bps_3fold} applies to define BPS polynomials of $X$ of class $(\beta,0)$.

\begin{definition}
\label{Defn:BPSS}
Let $S$ be a smooth projective surface over $\CC$ and $\beta \in H_2(S,\ZZ)$ such that 
$m_\beta:=-1+c_1(S)\cdot \beta \geq 0$.
The \emph{BPS polynomial} of $S$ of class $\beta$ is the BPS polynomial of the 3-fold $X=S \times \PP^1$ of class $(\beta,0)\in H_2(X,\ZZ)=H_2(S,\ZZ) \times \ZZ$ and with insertion of the cohomology classes $\gamma_\beta$:
\[  BPS_{\beta}^S(q) := BPS_{(\beta,0),\gamma_\beta}^X(q) \in \ZZ[q^\pm] \,. \]
\end{definition}

The following result shows that the BPS polynomials $BPS_{g,\beta}^S$ can be described in terms of Gromov--Witten invariants of $S$ with insertion of a top lambda class.
For any smooth projective surface $S$ over $\CC$, $g \in \ZZ_{\geq 0}$, and $\beta \in H_2(S,\ZZ)$ such that $m_\beta:=-1+c_1(S) \cdot \beta \geq 0$, the moduli space $\overline{M}_{g,m_\beta}(S,\beta)$ of $m_\beta$-marked genus $g$ stable maps to $S$ of class $\beta$ carries a virtual fundamental class  
$[\overline{M}_{g,m_\beta}(S,\beta)]^{\vir}$
of dimension $g-1+c_1(S) \cdot \beta +m_\beta= g +2m_\beta$.
We define the Gromov--Witten invariant $GW_{g,\beta}^S$ by imposing $m_\beta$ point constraints and inserting the top lambda class $(-1)^g\lambda_g$, that is, 
\begin{equation} \label{eq_gw_S}
GW_{g,\beta}^S :=  \int_{[\overline{M}_{g,m_\beta}(S,\beta)]^{\vir}} (-1)^g \lambda_g \prod_{i=1}^{m_\beta} \mathrm{ev}_i^\star(\pt_S) \in \QQ\,,\end{equation}
where $\ev_i : \overline{M}_{g,m_\beta}(S,\beta) \to S$ is the evaluation map at the $i$-the marked point, and $\pt_S \in H^4(S,\ZZ)$ is the Poincar\'e dual class of a point in $S$. 
Moreover, $\lambda_g=c_g(\mathbb{E}) \in H^{2g}(\overline{M}_{g,m_\beta}(S,\beta), \QQ)$ is the top Chern class of the Hodge bundle $\mathbb{E}$, that is, the rank $g$ vector bundle over $\overline{M}_{g,m_\beta}(S,\beta)$ with fiber $H^0(C,\omega_C)$ over a stable map $C \rightarrow S$ \cite[\S 4]{mumford}. 
Note that, for $g=0$, we have $(-1)^0 \lambda_0=1$, and so the Gromov--Witten invariants $GW_{g,\beta}^S$ indeed specialize to the genus zero Gromov--Witten invariants $GW_{0,\beta}^S$ introduced in \S \ref{section_complex}, as the notation suggests.

\begin{lemma} \label{lem_X_S}
Let $S$ be a smooth projective surface over $\CC$ and $\beta \in H_2(S,\ZZ)$ such that 
$m_\beta:=-1+c_1(S)\cdot \beta \geq 0$. Then,
for every $g\geq 0$, the Gromov--Witten invariant $GW^X_{g,(\beta,0),\gamma_\beta}$ of the 3-fold $X=S \times \PP^1$ defined in \eqref{eq_gw_X} and the Gromov--Witten invariant
$GW_{g,\beta}^S$ of $S$ defined in \eqref{eq_gw_S} are equal:
\begin{equation} \label{eq_X_S}
GW^X_{g,(\beta,0),\gamma_\beta}=GW_{g,\beta}^S \,.
\end{equation}
Moreover, with the change of variables $q=e^{iu}$, we have
\begin{equation} \label{eq_bps_S}
BPS_\beta^S(q)=\left(2 \sin \left(\frac{u}{2}\right) \right)^{1-m_\beta} \sum_{g \geq 0}
GW_{g,\beta}^S u^{2g-1+m_\beta} \,.
\end{equation}
\end{lemma}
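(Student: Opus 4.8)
The plan is to establish the numerical identity \eqref{eq_X_S} first, and then to deduce the generating-function identity \eqref{eq_bps_S} by a direct substitution. The starting point is the observation that, since the $\PP^1$-component of the class $(\beta,0)$ vanishes, every stable map contributing to $GW^X_{g,(\beta,0),\gamma_\beta}$ is supported in a single fiber of $\pi_{\PP^1}\colon X=S\times\PP^1\to\PP^1$: for a connected domain $C$ the composite $\pi_{\PP^1}\circ f$ has degree $0$, hence is constant with some value $p\in\PP^1$, so that $f$ factors through $S\times\{p\}\cong S$. First I would use the insertion $\gamma_{\beta,0}=\pi_{\PP^1}^*(\pt_{\PP^1})$ to pin the common value $p$ to a fixed point $p_0$, thereby reducing the computation to genus-$g$ stable maps into the single fiber $S\times\{p_0\}$ carrying the $m_\beta$ point constraints $\pi_S^*(\pt_S)$, i.e. to the moduli space $\overline{M}_{g,m_\beta}(S,\beta)$.

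The heart of the argument is then the comparison of the perfect obstruction theory of $\overline{M}_{g,m_\beta}(X,(\beta,0))$ along this fiber locus with that of $\overline{M}_{g,m_\beta}(S,\beta)$. Along a map into $S\times\{p_0\}$ there is a canonical splitting $f^*T_X\cong f^*T_S\oplus\cO_C$, where the trivial summand is $f^*T_{\PP^1}$ with $f$ constant in the $\PP^1$-direction. The $f^*T_S$ summand recovers the obstruction theory of maps to $S$, while the normal summand contributes $R^\bullet\pi_*\cO_C$; its $H^0=\CC$ accounts for the $\PP^1$-direction that was fixed by $\gamma_{\beta,0}$, and its $H^1$ gives a rank-$g$ obstruction bundle canonically isomorphic, via Serre duality, to the dual Hodge bundle $\mathbb{E}^\vee$. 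I would conclude that the virtual classes are related by capping with the Euler class $e(\mathbb{E}^\vee)=c_g(\mathbb{E}^\vee)=(-1)^g\lambda_g$. I expect this to be the main obstacle: it requires a genuinely functorial comparison of the two obstruction theories and the identification of the excess bundle with $\mathbb{E}^\vee$, rather than a mere dimension count.

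Granting this, the insertions transport cleanly: the $\PP^1$-insertion integrates out the fixed $\PP^1$-direction, and each $\ev_i^*\gamma_{\beta,i}=\ev_i^*(\pt_S)$ becomes the corresponding point constraint on $\overline{M}_{g,m_\beta}(S,\beta)$. Putting the virtual-class comparison together with these insertions yields
\[ GW^X_{g,(\beta,0),\gamma_\beta}=\int_{[\overline{M}_{g,m_\beta}(S,\beta)]^{\vir}}(-1)^g\lambda_g\prod_{i=1}^{m_\beta}\ev_i^*(\pt_S)=GW^S_{g,\beta}, \]
which is exactly \eqref{eq_X_S} and matches the definition \eqref{eq_gw_S} of $GW^S_{g,\beta}$. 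Finally, substituting $GW^X_{g,(\beta,0),\gamma_\beta}=GW^S_{g,\beta}$ together with $c_1(X)\cdot(\beta,0)=c_1(S)\cdot\beta=m_\beta+1$ into the generating-function expression \eqref{Eq:alternativeBPS} for $BPS^X_{(\beta,0),\gamma_\beta}(q)$, and invoking Definition \ref{Defn:BPSS} that $BPS^S_\beta(q)=BPS^X_{(\beta,0),\gamma_\beta}(q)$, gives \eqref{eq_bps_S} immediately.
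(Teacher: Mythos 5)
Your proposal is correct, and it reaches the identity \eqref{eq_X_S} by a genuinely different route than the paper. The paper's proof is $\CC^\star$-equivariant: it applies the virtual localization formula of Graber--Pandharipande to the action scaling the $\PP^1$ factor, lifts the insertion $\pt_{\PP^1}$ to the equivariant class $[0]$ (which kills the fixed locus of maps into $S\times\infty$), and then cites \cite[Lemma 7]{MPT} for the fact that the inverse Euler class of the virtual normal bundle along the fixed locus $S\times 0$ is $\frac{1}{t}(-1)^g\lambda_g$, so that the equivariant parameters cancel. Your argument is the non-equivariant counterpart: maps of class $(\beta,0)$ are confined to fibers, so the moduli space is $\overline{M}_{g,m_\beta}(S,\beta)\times\PP^1$, the obstruction theories differ by the excess bundle $R^1\pi_*\cO_C\otimes T_{\PP^1}\cong\mathbb{E}^\vee\boxtimes T_{\PP^1}$, and the point insertion on $\PP^1$ selects the top Chern class $c_g(\mathbb{E}^\vee)=(-1)^g\lambda_g$ (note the twist by $T_{\PP^1}$ is harmless precisely because of this insertion). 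The geometric heart is the same in both proofs -- the normal-direction obstruction is the dual Hodge bundle -- but the machinery invoked differs: your route avoids equivariant lifts and is conceptually more self-contained, at the price of the step you correctly flag as the main obstacle, namely a functorial comparison of the two perfect obstruction theories (e.g.\ via virtual pullback or a compatibility statement \`a la Behrend--Fantechi), which would need to be carried out or cited; the paper instead gets exactly this packaged into two proven black boxes (virtual localization plus the MPT computation), which is why its proof is a few lines. Your derivation of \eqref{eq_bps_S} from \eqref{eq_X_S}, \eqref{Eq:alternativeBPS}, and $c_1(X)\cdot(\beta,0)=m_\beta+1$ is identical to the paper's.
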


\begin{proof}
We calculate $GW^X_{g,(\beta,0),\gamma_\beta}$
by applying the localization formula in Gromov--Witten theory \cite{GP_localization} to the $\CC^\star$ action on $X=S\times \PP^1$ which scales $\PP^1$ with the fixed points $0$ and $\infty$. 
To do this, we lift the insertion $\pt_{\PP^1}$ to the equivariant point class $[0]$. Since the restriction of $[0]$ to $\infty$ is zero, 
the contribution of the fixed locus consisting of stable maps mapping to $S \times \infty$ vanishes. On the other hand, the restriction of $[0]$ to $0$ is equal to the equivariant parameter $t$. As in \cite[Lemma 7]{MPT}, the contribution of the inverse Euler class of the virtual normal bundle to the fixed locus consisting of stable maps mapping to $S \times 0$ is given by $\frac{1}{t} (-1)^g \lambda_g$. Therefore, the factors $t$ and $\frac{1}{t}$ cancel, and so \eqref{eq_X_S} follows.
Finally, \eqref{eq_bps_S} follows immediately from the combination of \eqref{eq_X_S} and \eqref{Eq:alternativeBPS}.
\end{proof}

\begin{remark}
    Although \eqref{eq_bps_S} provides an explicit expression of $BPS_\beta^S(q)$ in terms of higher genus Gromov--Witten theory of $S$, it is unclear how to directly prove the integrality and polynomiality of $BPS_\beta^S(q)$ using this formula.
\end{remark}

The following result shows that the BPS polynomials $BPS_\beta^S(q)$ specialize at $q=1$ to the genus zero Gromov--Witten invariants $GW_{0,\beta}^S$ introduced in \S \ref{section_complex}.

\begin{corollary}
\label{cor_q1}
Let $S$ be a smooth projective surface over $\CC$ and $\beta \in H_2(S,\ZZ)$ such that 
$m_\beta:=-1+c_1(S)\cdot \beta \geq 0$. Then,  $BPS_\beta^S(1)=GW_{0,\beta}^S$.
\end{corollary}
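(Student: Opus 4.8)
The plan is to prove $BPS_\beta^S(1) = GW_{0,\beta}^S$ by directly evaluating the explicit formula \eqref{eq_bps_S} established in Lemma \ref{lem_X_S}. The key observation is that the substitution $q = e^{iu}$ translates the limit $q \to 1$ into the limit $u \to 0$, and the factor $2\sin(u/2)$ behaves like $u$ to leading order as $u \to 0$. So the strategy is simply to extract the constant term (in $u$) of the right-hand side of \eqref{eq_bps_S} and verify that it equals $GW_{0,\beta}^S$, while simultaneously confirming that no negative powers of $u$ survive (which would obstruct taking $q=1$).

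Concretely, I would start from
\begin{equation*}
BPS_\beta^S(q) = \left(2\sin\left(\frac{u}{2}\right)\right)^{1-m_\beta} \sum_{g \geq 0} GW_{g,\beta}^S\, u^{2g-1+m_\beta}\,,
\end{equation*}
and use the expansion $2\sin(u/2) = u\bigl(1 + O(u^2)\bigr)$ as $u \to 0$. Raising this to the power $1-m_\beta$ gives $\left(2\sin(u/2)\right)^{1-m_\beta} = u^{1-m_\beta}\bigl(1 + O(u^2)\bigr)$. Multiplying by the genus $g$ term $u^{2g-1+m_\beta}$ yields a leading contribution of order $u^{1-m_\beta}\cdot u^{2g-1+m_\beta} = u^{2g}$. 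Thus the full sum is $\sum_{g \geq 0} GW_{g,\beta}^S\, u^{2g}\bigl(1 + O(u^2)\bigr)$, which is a power series in $u$ starting at $u^0$ with constant term $GW_{0,\beta}^S$, coming exactly from the $g=0$ summand. All higher-genus terms, together with the corrections from the sine expansion, contribute only strictly positive even powers of $u$. Since $BPS_\beta^S(q)$ is already known to be an honest Laurent polynomial in $q$ by Definition \ref{def_bps_3fold}, it is in particular regular at $q=1$, so the $u \to 0$ limit of the right-hand side equals $BPS_\beta^S(1)$, and we conclude $BPS_\beta^S(1) = GW_{0,\beta}^S$.

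The only subtlety to handle carefully is the bookkeeping of powers of $u$: one must check that the $m_\beta$-dependent exponents cancel precisely so that the series begins at $u^0$ rather than at some negative power, which is what guarantees finiteness of the limit and identifies the constant term with the $g=0$ invariant. This cancellation is exactly the content of the factor $\left(2\sin(u/2)\right)^{1-m_\beta}$ being designed to offset the shift $2g-1+m_\beta$ in the exponent. There is no real obstacle here — the result follows formally once the leading-order behavior of the sine factor is recorded — but I would note explicitly that polynomiality of $BPS_\beta^S(q)$ (already invoked from the $3$-fold definition) is what licenses interpreting the $u\to 0$ asymptotic expansion as a genuine evaluation at $q=1$.
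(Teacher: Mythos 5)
Your proposal is correct and follows essentially the same route as the paper, which also proves the corollary by taking the limit $q=e^{iu}\to 1$, i.e.\ $u \to 0$, in \eqref{eq_bps_S} of Lemma \ref{lem_X_S}. Your version simply spells out the power-counting in $u$ and the role of polynomiality that the paper leaves implicit.
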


\begin{proof}
This follows directly from taking the limit $q=e^{iu} \rightarrow 1$, that is, $u \rightarrow 0$, in \eqref{eq_bps_S} of Lemma \ref{lem_X_S}.
\end{proof}

\subsubsection{BPS polynomials and stable pairs}
\label{sec_stable_pairs}
For any $3$-fold $X$, the Gromov--Witten/pairs correspondence  \cite{MNOP1, MNOP2}\cite[Conjecture 3.28]{PT1}, established in many cases  
\cite{MOOP, PP, pardon2023universally}, predicts that the BPS invariants $BPS_{g,\beta,\gamma}^X$ of $X$ have an alternative description in terms of Pandharipande--Thomas invariants $PT_{\beta,\chi,\gamma_\beta}^X$ with insertions $\gamma_\beta$ defined using moduli spaces of stable pairs $(\cO_X \rightarrow F)$ with $[F]=\beta$ and $\chi(F)=\chi$ \cite{PT1}. 
When $X=S \times \PP^1$ and $m_\beta\geq 0$, the following result shows that the general Gromov--Witten/pairs correspondence simplifies.

\begin{lemma}\label{lemma_stable_pairs}
Let $S$ be a smooth projective surface over $\CC$ and $\beta \in H_2(S,\ZZ)$ such that 
$m_\beta:=-1+c_1(S)\cdot \beta \geq 0$. Then, the Gromov--Witten/pairs correspondence for $X=S \times \PP^1$ implies that
\[ \sum_{\chi \in \ZZ} PT_{\beta,\chi, \gamma_\beta}^{S \times \PP^1} (-q)^{\chi} =-q (1-q)^{m_\beta-1} BPS_\beta^S(q)\,.\]
\end{lemma}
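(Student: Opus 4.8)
The plan is to leverage the Gromov–Witten/pairs correspondence for the specific 3-fold $X = S \times \PP^1$ and transform the resulting identity into the claimed relation. The Gromov–Witten/pairs correspondence, in the form stated in \cite{PT1}, asserts that the generating functions of Gromov–Witten and Pandharipande–Thomas invariants match after the standard change of variables. Concretely, the correspondence states that
\[
\sum_{\chi \in \ZZ} PT^X_{\beta,\chi,\gamma_\beta}(-q)^{\chi}
= \left(\sum_{g \geq 0} GW^X_{g,(\beta,0),\gamma_\beta}\, u^{2g-2+c_1(X)\cdot (\beta,0)}\right)\Big|_{q=e^{iu}}
\]
up to the usual identification, so the first step is to write down this correspondence explicitly in our setting, using that $c_1(X)\cdot(\beta,0) = c_1(S)\cdot\beta = m_\beta + 1$.

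Next I would substitute the definition of the BPS polynomial. By \eqref{Eq:alternativeBPS}, the genus-summed Gromov--Witten generating series is exactly
\[
\sum_{g\geq 0} GW^X_{g,(\beta,0),\gamma_\beta}\, u^{2g-2+c_1(X)\cdot(\beta,0)}
= \left(2\sin\tfrac{u}{2}\right)^{c_1(X)\cdot(\beta,0)-2} BPS^X_{(\beta,0),\gamma_\beta}(q)
= \left(2\sin\tfrac{u}{2}\right)^{m_\beta-1} BPS^S_\beta(q),
\]
where the last equality uses Definition \ref{Defn:BPSS}. The remaining task is purely the change of variables: I must rewrite the prefactor $\left(2\sin\tfrac{u}{2}\right)^{m_\beta-1}$ in terms of $q=e^{iu}$. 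Using $2\sin\tfrac{u}{2} = -i(q^{1/2}-q^{-1/2})$, one computes $\left(2\sin\tfrac{u}{2}\right)^{m_\beta-1}$ and, combining it with the $(-q)$-normalization built into the PT side, the half-integer powers and signs must organize into the stated factor $-q(1-q)^{m_\beta-1}$. This is the one genuine computation, and I would carry it out carefully by expressing $q^{1/2}-q^{-1/2} = q^{-1/2}(q-1) = -q^{-1/2}(1-q)$ and tracking the powers of $-i$ and $q^{1/2}$ so that the result is manifestly a Laurent polynomial in $q$.

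\textbf{The main obstacle} I anticipate is not conceptual but bookkeeping: ensuring that the normalization conventions of the Gromov--Witten/pairs correspondence (the precise power of $q^{\pm 1/2}$ appearing in the standard statement, and the sign conventions in $(-q)^\chi$) match the conventions in Definition \ref{def_bps_3fold}, so that the half-integer powers cancel and the overall sign comes out to exactly $-q$. Because the BPS polynomial in \eqref{Eq:BPS} is symmetric under $q\mapsto q^{-1}$ while the PT generating series is not manifestly so, the asymmetric prefactor $-q(1-q)^{m_\beta-1}$ must absorb precisely this asymmetry, and verifying that the two sides transform compatibly under $q \mapsto q^{-1}$ provides a useful internal consistency check on the constants. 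Once the prefactor is correctly identified, the identity follows immediately from the correspondence together with \eqref{Eq:alternativeBPS} and Definition \ref{Defn:BPSS}.
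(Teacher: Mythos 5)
Your overall route is the same as the paper's: invoke the Gromov--Witten/pairs correspondence of \cite[Conjecture 3.28]{PT1} for $X=S\times\PP^1$, rewrite the Gromov--Witten series via \eqref{Eq:alternativeBPS} and Definition \ref{Defn:BPSS}, and carry out the change of variables $q=e^{iu}$. Your bookkeeping plan is sound: with the correct normalization of the correspondence (which, in the paper's conventions, contributes a prefactor $q^{\frac{1}{2}(m_\beta+1)}(-i)^{m_\beta+1}$), the factor $\left(2\sin\frac{u}{2}\right)^{m_\beta-1}=(-i)^{m_\beta-1}(q^{1/2}-q^{-1/2})^{m_\beta-1}$ combines with it to give exactly $(-1)^{m_\beta}q(q-1)^{m_\beta-1}=-q(1-q)^{m_\beta-1}$, so the computation you defer does close.

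There is, however, one genuine gap. The correspondence of \cite[Conjecture 3.28]{PT1} is stated for \emph{disconnected} Gromov--Witten invariants, whereas the series on the right-hand side of your first display --- and the invariants entering \eqref{Eq:alternativeBPS} and the definition of $BPS^S_\beta(q)$ --- are \emph{connected} invariants. Your proposal silently identifies the two, and this identification is not automatic: stable pairs can have disconnected support, so a priori the PT series is matched with a sum over all ways of distributing the class $(\beta,0)$, the genus, and the insertions among several connected components. The paper devotes the first part of its proof to exactly this point: since $\gamma_\beta$ contains a \emph{single} insertion pulled back from a point of $\PP^1$, and curves of class $(\beta,0)$ lie in fibers $S\times\{\mathrm{pt}\}$, the product formula of \cite{behrend_product} shows that any connected component not carrying the $\PP^1$-point insertion has an unconstrained $\PP^1$-factor and contributes zero; hence all disconnected configurations with more than one component vanish and the disconnected series reduces to the connected one. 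Without this reduction, the identity you start from is not what the Gromov--Witten/pairs correspondence asserts, so you should add this step before the rest of your argument goes through.
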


\begin{proof}
The Gromov--Witten/pairs correspondence in \cite[Conjecture 3.28]{PT1} involves disconnected Gromov--Witten invariants. However, since the definition of the Gromov--Witten invariants $GW_{g,(\beta,0),\gamma_\beta}$
contains a single insertion of the pullback of a point class from $\PP^1$, it follows from the product formula in Gromov--Witten theory \cite{behrend_product} that the only non-zero disconnected invariants are the connected ones. Thus, using that $c_1(S)\cdot \beta=m_\beta +1$, \cite[Conjecture 3.28]{PT1} becomes in this case
\[ \sum_{\chi \in \ZZ} PT_{\beta,\chi, \gamma_\beta}^{S \times \PP^1} (-q)^{\chi} = q^{\frac{1}{2}(m_\beta+1)}(-i)^{m_\beta+1} \sum_{g \geq 0} GW_{g,\beta,\gamma_\beta}^X u^{2g-1+m_\beta} \,.\]
By \eqref{Eq:alternativeBPS}, we have 
\[
\sum_{g \geq 0} GW_{g,\beta, \gamma_\beta}^X u^{2g-1+m_\beta}
= ((-i)(q^{\frac{1}{2}}-q^{-\frac{1}{2}}))^{m_\beta-1}BPS_{\beta}^S(q) \,,\]
and so we obtain
\[ \sum_{\chi \in \ZZ} PT_{\beta,\chi, \gamma_\beta}^{S \times \PP^1} (-q)^{\chi} = q^{\frac{1}{2}(m_\beta+1)}(-i)^{m_\beta+1} ((-i)(q^{\frac{1}{2}}-q^{-\frac{1}{2}}))^{m_\beta-1}BPS_{\beta}^S(q)
= -q (1-q)^{m_\beta-1} BPS_\beta^S(q)\,.\]

\end{proof}

In this paper, we will focus on the case where $S=S_n$ is the blow-up of $\PP^2$ at $n$ points in general position. 
Since $X=S_n \times \PP^1$ is deformation equivalent to a toric $3$-fold, the Gromov--Witten/pairs correspondence holds in this case by \cite{MOOP}.

\subsubsection{BPS polynomials and blow-ups}

The following result determines the BPS polynomials associated to classes $\beta$ of exceptional curves, that is, $\beta=[E]$ for a curve $E \simeq \PP^1 \subset S$ with $E^2=-1$. By the adjunction formula, we have $c_1(S) \cdot \beta =1$, and so $m_\beta=0$.

\begin{lemma}
\label{lem_ex}
Let $S$ be a smooth projective surface and $\beta=[E] \in H_2(S,\ZZ)$ the class of an exceptional curve $E \subset S$. Then, the corresponding BPS polynomial is equal to $1$: \[BPS_\beta^S(q)=1\,.\]
\end{lemma}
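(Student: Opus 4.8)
The plan is to reduce the statement to a local Gromov--Witten computation governed entirely by the single genus-zero curve $E$. First I would record that, by the adjunction formula applied to the $(-1)$-curve $E \cong \PP^1$, one has $2\cdot 0 - 2 = E^2 + K_S\cdot E = -1 + K_S\cdot E$, so $c_1(S)\cdot\beta = -K_S\cdot E = 1$ and hence $m_\beta = 0$. Thus $\gamma_\beta$ consists of the single insertion $\pi_{\PP^1}^*(\pt_{\PP^1})$ with no point insertions from $S$, and applying Lemma \ref{lem_X_S} with $m_\beta = 0$ reduces the claim to the identity
\[ \sum_{g\geq 0} GW_{g,\beta}^S\, u^{2g-1} = \frac{1}{2\sin(u/2)}, \]
after which multiplying by $2\sin(u/2)$ yields $BPS_\beta^S(q) = 1$.

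Next I would analyze the moduli spaces $\overline{M}_{g,0}(S,[E])$. Since $E^2 = -1$, the class $[E]$ contains a unique effective curve: any effective $C$ with $[C] = [E]$ satisfies $C\cdot E = -1 < 0$, which forces the irreducible curve $E$ to be a component of $C$, and then $C - E$ is effective of class $0$, so $C = E$. Consequently every stable map of class $[E]$ factors set-theoretically through $E$, so that $\overline{M}_{g,0}(S,[E])$ together with its perfect obstruction theory depends only on a formal neighborhood of $E$ in $S$. As all $(-1)$-curves have isomorphic formal neighborhoods, namely the total space of $\cO_{\PP^1}(-1)$, the invariants $GW_{g,\beta}^S$ are independent of the pair $(S,E)$, so it suffices to evaluate them once in the universal local model.

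I would then carry out that evaluation. Writing the virtual class as that of $\overline{M}_{g,0}(\PP^1,1)$ twisted by the Euler class of the obstruction bundle $R^1\pi_* f^*\cO_{\PP^1}(-1)$ coming from the normal direction $N_{E/S} = \cO_{\PP^1}(-1)$, one gets
\[ GW_{g,\beta}^S = \int_{[\overline{M}_{g,0}(\PP^1,1)]^{\vir}} (-1)^g\lambda_g\, e\big(R^1\pi_* f^*\cO_{\PP^1}(-1)\big), \]
which is a standard Hodge integral that I would evaluate by $\CC^\star$-localization on the target $\PP^1$ (or by citing the known local $(-1)$-curve computation); the resulting generating series is precisely $\frac{1}{2\sin(u/2)}$. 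Equivalently, in the language of the $3$-fold $X = S\times\PP^1$, this records that $E\times\{0\}$ is a rigid isolated rational curve contributing a single genus-zero Gopakumar--Vafa state, so that $BPS_{0,(\beta,0),\gamma_\beta}^X = 1$ and $BPS_{g,(\beta,0),\gamma_\beta}^X = 0$ for all $g\geq 1$, whence $BPS_\beta^S(q) = 1$.

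The main obstacle is this last step: pinning down the exact value $\frac{1}{2\sin(u/2)}$ of the Hodge-integral series, that is, the simultaneous normalization at genus zero and the vanishing of all higher-genus contributions. The genus-zero term is immediate, since $E$ is rigid and unobstructed in $S$, giving $GW_{0,\beta}^S = 1$ and recovering $BPS_\beta^S(1) = GW_{0,\beta}^S = 1$ via Corollary \ref{cor_q1}; the first correction $GW_{1,\beta}^S = \tfrac{1}{24}$ is consistent with the classical $\lambda_1$ integral over $\overline{M}_{1,1}$. The real content is to control every genus at once, for which the localization computation on the local model $\cO_{\PP^1}(-1)$ is the efficient route.
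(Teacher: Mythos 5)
Your proposal is correct and takes essentially the same approach as the paper: both arguments use $m_\beta=0$ and the fact that every stable map of class $[E]$ has image $E$ to reduce, via Lemma \ref{lem_X_S}, to the local $(-1)$-curve evaluation $\sum_{g\geq 0} u^{2g-1}\int_{[\overline{M}_{g,0}(\PP^1,1)]^{\vir}} (-1)^g\lambda_g\, e\bigl(R^1\pi_*f^*\cO_{\PP^1}(-1)\bigr) = \bigl(2\sin(u/2)\bigr)^{-1}$, and your Hodge-integral expression is precisely equivalent to the paper's identity $GW_{g,\beta}^S = t\, GW_{g,[\PP^1]}^{\cO_{\PP^1}\oplus\cO_{\PP^1}(-1)}$ obtained by localization as in \cite{MPT}. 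The only (minor) divergence is in how this known series is pinned down: the paper degenerates the local curve $\cO_{\PP^1}\oplus\cO_{\PP^1}(-1)$ into a normal crossing union of $\cO_{\PP^1}\oplus\cO_{\PP^1}$ and $\cO_{\PP^1}\oplus\cO_{\PP^1}(-1)$ and evaluates the relative pieces by \cite[Lemmas 6.2--6.3]{BP_local_curves}, whereas you propose direct target localization or citing the same local-curve literature.
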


\begin{proof}
Since the curve $E$ is rigid in $S$, it follows from the localization formula as in \cite[Lemma 7]{MPT} that we have 
\begin{equation} \label{eq_local_curve}
GW_{g,\beta}^S=t\, GW_{g,[\PP^1]}^{\cO_{\PP^1}\oplus \cO_{\PP^1}(-1)}\,,\end{equation}
where $GW_{g,[\PP^1]}^{\cO_{\PP^1}\oplus \cO_{\PP^1}(-1)}$ is the genus $g$ degree $1$ $\CC^\star$-equivariant Gromov--Witten invariant of the local curve $\cO_{\PP^1}\oplus \cO_{\PP^1}(-1)$, where $t$ is the equivariant parameter for the action of $\CC^\star$ scaling the fibers of the line bundle $\cO_{\PP^1}$. By degeneration of $\cO_{\PP^1}\oplus \cO_{\PP^1}(-1)$ into the normal crossing union of $\cO_{\PP^1}\oplus \cO_{\PP^1}$ and $\cO_{\PP^1}\oplus \cO_{\PP^1}(-1)$, and using \cite[Lemma 6.2]{BP_local_curves} and \cite[Lemma 6.3]{BP_local_curves} to evaluate the relative theories, we obtain that 
\[ \sum_{g \geq 0} GW_{g,[\PP^1]}^{\cO_{\PP^1}\oplus \cO_{\PP^1}(-1)} u^{2g-1}= \frac{1}{t}\frac{1}{2 \sin \left(\frac{u}{2}\right)}\,.\]
Hence, we have 
\[ \sum_{g \geq 0} GW_{g,\beta}^S u^{2g-1}=\frac{1}{2 \sin \left(\frac{u}{2}\right)}\,,\]
and so we obtain that $BPS_\beta^S(q)=1$ by \eqref{eq_bps_S} in Lemma \ref{lem_X_S}.
\end{proof}

Finally, the following result describes how the BPS polynomials change under blow-up. This will be used in the proof of Theorem \ref{thm_main} in \S \ref{section_bps_welschinger}.

\begin{lemma} \label{lem_blow_up}
Let $S$ be a smooth projective surface over $\CC$ and $\pi: \widetilde{S} \rightarrow S$ the blow-up of $S$ at a point. Then, for every $\beta \in H_2(S,\ZZ)$ and $g \in \ZZ_{\geq 0}$, we have $GW_{g, \pi^\star \beta}^{\widetilde{S}}=GW_{g,\beta}^S$. 
In particular, we have $BPS_{\pi^\star \beta}^{\widetilde{S}}(q)=BPS_{g,\beta}^S$.
\end{lemma}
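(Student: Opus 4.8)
The plan is to reduce everything to the genus-by-genus identity $GW_{g,\pi^\star\beta}^{\widetilde S} = GW_{g,\beta}^S$ for all $g \geq 0$; the assertion about BPS polynomials then follows formally from \eqref{eq_bps_S}, since the two expressions have identical exponents and identical Gromov--Witten coefficients once we know $m_{\pi^\star\beta} = m_\beta$. To verify this equality of $m$'s, recall that for the blow-up of a surface at a point $c_1(\widetilde S) = \pi^\star c_1(S) - [E]$, where $E$ is the exceptional curve, and that $[E]\cdot \pi^\star\beta = 0$ because $\pi^\star\beta$ is represented by the strict transform of a curve of class $\beta$ avoiding the blown-up point $p$, hence disjoint from $E$. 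By the projection formula $c_1(\widetilde S)\cdot \pi^\star\beta = c_1(S)\cdot\beta$, so $m_{\pi^\star\beta} = m_\beta$, the virtual dimensions $g + 2m_\beta$ of $\overline{M}_{g,m_\beta}(\widetilde S,\pi^\star\beta)$ and $\overline{M}_{g,m_\beta}(S,\beta)$ agree, and the insertions $(-1)^g\lambda_g\prod_{i}\ev_i^\star(\pt_S)$ have matching total codimension on both sides.

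Next I would introduce the comparison morphism. Composing a stable map $f\colon C \to \widetilde S$ of class $\pi^\star\beta$ with the contraction $\pi$ and stabilizing the domain yields a stable map $C \to S$ of class $\pi_\star\pi^\star\beta = \beta$, producing $b\colon \overline{M}_{g,m_\beta}(\widetilde S,\pi^\star\beta) \to \overline{M}_{g,m_\beta}(S,\beta)$. Over the open locus of maps whose image is disjoint from $E$ (resp.\ from $p$), the morphism $b$ is an isomorphism, since $\pi$ restricts to an isomorphism $\widetilde S \setminus E \cong S \setminus \{p\}$. Under this identification the evaluation maps are compatible with $\pi$, so point insertions placed at general points away from $p$ correspond exactly; moreover $b^\star\lambda_g = \lambda_g$, because stabilization contracts only genus $0$ components, which contribute nothing to the Hodge bundle $\mathbb{E}$. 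Thus, once the integrals are shown to localize on the $E$-avoiding locus, the two $\lambda_g$-weighted point integrals become literally identified and the equality of Gromov--Witten invariants follows.

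The crux, and the step I expect to be the main obstacle, is ruling out contributions from maps meeting $E$ at the level of virtual classes. Set-theoretically this is a general-position statement: only those curves through the $m_\beta$ general points (taken away from $p$) that additionally pass through the fixed point $p$ lift to maps meeting $E$, and for a generic configuration no contributing curve does so; in the enumerative del Pezzo range this is immediate, but in general it must be promoted to a statement about virtual fundamental classes. I would settle it in one of two ways: either by invoking the standard blow-up invariance of Gromov--Witten invariants for pulled-back curve classes and pulled-back insertions, or -- more in line with the degeneration arguments used elsewhere in the paper -- by deforming $S$ to the normal cone of $p$, whose special fiber is $\widetilde S \cup_E \PP^2$ with $\PP^2 = \PP(T_pS \oplus \CC)$, and applying the degeneration formula. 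Since $[E]\cdot\pi^\star\beta = 0$, the class $\pi^\star\beta$ does not spread into the $\PP^2$ component, the point insertions specialize into the $\widetilde S$ component, and the special-fiber contribution collapses to the absolute invariant of $\widetilde S$ in class $\pi^\star\beta$, matching the general-fiber invariant $GW_{g,\beta}^S$; here the identity $b^\star\lambda_g = \lambda_g$ guarantees that the $\lambda_g$ insertion is carried through the comparison unchanged.
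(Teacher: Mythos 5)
Your overall architecture matches the paper's: reduce everything to the genus-by-genus identity $GW_{g,\pi^\star\beta}^{\widetilde S}=GW_{g,\beta}^S$ and deduce the BPS statement from Lemma \ref{lem_X_S}. The paper does exactly this, quoting \cite[Theorem 1.2]{Hu_blow_up} for $g=0$ and observing that Hu's proof goes through for $g>0$ with the $(-1)^g\lambda_g$ insertion because of the splitting/gluing property of $\lambda_g$ (see \cite[Lemma 7]{Bou2019}). Your preliminary steps are fine: the computation $m_{\pi^\star\beta}=m_\beta$ via the projection formula, the comparison map $b$, and the identity $b^\star\lambda_g=\lambda_g$ (stabilization only contracts rational components, which do not affect the Hodge bundle) are all correct.

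However, at the step you yourself flag as the crux, both of your proposed resolutions have genuine gaps. In the degeneration route, the assertion that ``$[E]\cdot\pi^\star\beta=0$ implies the class does not spread into the $\PP^2$ component'' is a non sequitur: the degeneration formula for the central fiber $\widetilde S\cup_E\PP^2$ contains a term for every splitting $\beta_1=\pi^\star\beta-k[E]$ on $\widetilde S$ and $\beta_2=k[L]$ on $\PP^2$ ($L$ a line), with matching condition $\beta_1\cdot E=\beta_2\cdot L=k$, and such classes are effective for $k\geq 1$ (e.g.\ strict transforms of curves through $p$); vanishing of $[E]\cdot\pi^\star\beta$ only describes the $k=0$ term. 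Killing the $k\geq 1$ terms requires a dimension count after specializing the $m_\beta$ point constraints into $\widetilde S\setminus E$ (the count does close -- on the $\PP^2$ side the constraint degenerates to $3k=1$, which is impossible -- but it must actually be carried out, over all genus distributions and gluing graphs). Relatedly, $b^\star\lambda_g=\lambda_g$ is not the tool that carries the Hodge insertion through a degeneration argument: once the formula decomposes curves into pieces, what is needed is the splitting of $\lambda_g$ into products $\prod_v\lambda_{g_v}$ over tree-type gluings together with its vanishing on graphs with loops -- precisely the property the paper invokes. Finally, your citation route suffers the same hole: the standard blow-up invariance of \cite{Hu_blow_up} concerns insertions pulled back from the target, and $\lambda_g$ is not such a class, so the $g>0$ case is not covered by the citation alone; this is exactly why the paper supplements the reference with the $\lambda_g$ splitting/gluing remark rather than just quoting the theorem.
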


\begin{proof}
For $g=0$, this is proved in \cite[Theorem 1.2]{Hu_blow_up}. 
For $g>0$ with insertion of $(-1)^g \lambda_g$, 
the proof of \cite[Theorem 1.2]{Hu_blow_up} applies without change thanks to 
the splitting/gluing property of $\lambda_g$ -- see for example
\cite[Lemma 7]{Bou2019}.
Finally, the result for the BPS polynomials follows from the result for the Gromov--Witten invariants by Lemma \ref{lem_X_S}.
\end{proof}

\subsection{BPS and Block--G\"ottsche polynomials}
\label{section_BPS_BG}
In this section, we show that the BPS polynomials of toric del Pezzo surfaces agree with the Block--G\"ottsche polynomials reviewed in \S\ref{section_BG}. To do this, we will use the main result of \cite{Bou2019}, which provides a 
description of the Block-G\"ottsche invariants 
using higher genus log Gromov--Witten invariants of $S$ relatively to the toric boundary, with insertion of $(-1)^g \lambda_g$. 
We refer to \cite{logGWbyAC, logGW} for foundations of log Gromov--Witten theory.
On the other hand, by Lemma \ref{lem_X_S}, the BPS polynomials can be expressed in terms of higher genus Gromov--Witten invariants of $S$, also with insertion of $(-1)^g \lambda_g$. Consequently, we will first establish a comparison between log and non-log (absolute) higher genus Gromov--Witten invariants of $S$ in Theorem \ref{thm_absolute_relative_0}. The comparison bewteen BPS and Block-G\"ottsche polynomials will follow in Theorem \ref{thm_absolute_relative}.

\subsubsection{Log and absolute Gromov--Witten invariants of toric del Pezzo surfaces} \label{section_log_absolute}

Let $S$ be a toric del Pezzo surface, with toric boundary divisor
$D=\sum_{i=1}^\ell D_i$. For every $\beta \in H_2(S,\ZZ)$ such that $\beta \cdot D_i \geq 0$ for every $1\leq i \leq \ell$, we denote by $GW_{g,\beta}^{S/D}$ the genus $g$ log Gromov--Witten invariant of $(S,D)$ of class $\beta$, 
with $\beta \cdot D_i$ (unordered) marked points having contact order one along $D_i$ for all $1\leq i\leq \ell$,
and with insertion of $(-1)^g \lambda_g$ and $m_\beta
=-1+\beta \cdot D$ point classes.

\begin{theorem} \label{thm_absolute_relative_0}
    Let $S$ be a smooth toric del Pezzo surface, and $\beta \in H_2(S,\ZZ)$
such that $\beta \cdot D_j \geq 0$ for every toric divisor $D_j$ of $S$. Then, for every $g \geq 0$, we have
 \begin{equation} \label{eq_log_non_log}
GW_{g,\beta}^S = GW_{g,\beta}^{S/D}\,.\end{equation}

\end{theorem}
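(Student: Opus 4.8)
The plan is to compare the two invariants through the degeneration of $S$ to the normal cone of its toric boundary $D$, performed logarithmically since $D$ is only normal crossing. The essential structural input is that $(S,D)$ is log Calabi--Yau: because $S$ is toric del Pezzo we have $D=-K_S=c_1(S)$, so $c_1(T_S(-\log D))=0$ and $\beta\cdot D=c_1(S)\cdot\beta=m_\beta+1$. First I would record the dimension bookkeeping showing that both sides are honest numbers: the log moduli space carries $m_\beta+1$ tangency marked points together with $m_\beta$ interior marked points, so its virtual dimension is $g+2m_\beta$, exactly matching that of $\overline{M}_{g,m_\beta}(S,\beta)$ and the total codimension $g+2m_\beta$ of the $\lambda_g$ and point insertions.

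Next I would set up the degeneration $\mathcal S\to\A^1$ with general fiber $S$ and special fiber $S\cup\bigcup_i P_i$, where $P_i=\PP(\cO_{D_i}\oplus N_{D_i/S})$ is the $\PP^1$-bundle over $D_i$, the $P_i$ being glued to $S$ along $D_i$ and to one another over the nodes $D_i\cap D_j$. Pulling the $m_\beta$ point insertions back from general points of $S$, they specialize into the $S$-component away from $D$. Applying the logarithmic degeneration/decomposition formula \cite{logGW, logGWbyAC} then expresses $GW_{g,\beta}^S$ as a sum over decorated types: splittings of the class $\beta$, of the genus $g$, and of the contact data between a curve in $(S,D)$ and curves in the relative $\PP^1$-bundles $(P_i,D_{i,0}\cup D_{i,\infty})$, glued along matching contact orders on $D$.

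The heart of the argument is to show that all but one term in this sum vanishes, and here I would combine three inputs. First, deformation invariance lets me place the $m_\beta$ point constraints generically on $S$; since they pin down the class carried by the $S$-component, the full class $\beta$ and all $m_\beta$ constraints are forced onto $S$, leaving only fiber- or section-type curves in the $P_i$. Second, the insertion $\lambda_g$ vanishes on any stratum whose dual graph contains a loop, because the Hodge bundle then acquires a trivial summand $\cO^{b_1}$ with vanishing top Chern class; this is the splitting/vanishing property of \cite[Lemma 7]{Bou2019}, and it forces the dual graph of surviving maps to be a tree and concentrates the genus on the $S$-component. Third, the relative contributions of the $P_i$ reduce, by rubber/rigidity calculus exactly as in the local-curve computation underlying Lemma \ref{lem_ex} and \cite{BP_local_curves}, to a trivial factor of $1$ precisely when each contact order equals $1$, while higher contact orders are suppressed. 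Matching contact orders along $D$ over the $\beta\cdot D=m_\beta+1$ transverse intersections then isolates exactly the relative term with contact order one at each of the $m_\beta+1$ points, which is $GW_{g,\beta}^{S/D}$.

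The main obstacle is precisely this vanishing step, and in particular its interaction with the singular (normal crossing) locus of $D$: the logarithmic degeneration produces genuinely logarithmic strata supported over the corners $D_i\cap D_j$, where the naive relative picture breaks down, and one must verify that the $\lambda_g$-tree constraint together with the genericity of the point constraints still annihilates these corner contributions and that the surviving term is correctly identified with the log invariant as defined. Controlling these corner strata --- rather than the more routine $\PP^1$-bundle triviality away from the corners --- is where the argument requires the most care.
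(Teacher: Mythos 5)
Your overall strategy is the same as the paper's: degenerate to the normal cone of the toric boundary $D$ (the paper constructs this torically and runs the tropical decomposition formula of \cite{ACGS_decomposition}), use the loop-vanishing of $\lambda_g$ to force genus-zero dual graphs, and isolate a unique surviving term equal to $GW_{g,\beta}^{S/D}$. Your dimension bookkeeping and your input (b) match the paper exactly. The genuine gap is your input (c), and it propagates into the corner-stratum problem that you correctly flag at the end but explicitly defer. The claim that the relative calculus of the bundles $P_i$ ``suppresses'' higher contact orders is false: relative multiple covers of a fiber with tangency $w\geq 2$ contribute nontrivially in relative Gromov--Witten theory. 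Indeed, in the paper's own relative computation (Theorem \ref{thm_floor_gw}), edges of weight $w_e$ contribute the nonzero factor $[w_e]_q^2$, which does not vanish even at $q=1$. What the fiber-side calculus does give you---via Mumford's relation $\lambda_g^2=0$, as in \cite[Lemma 15]{Bou2019}---is that the genus must concentrate at the $S$-vertex; it cannot rule out higher tangency, nor can your input (a), since a configuration with all of $\beta$ on the $S$-component meeting $D_\rho$ with tangency $w\geq 2$, glued to a $w$-fold cover of a fiber of $P_\rho$, is perfectly consistent with generic point constraints on $S$.

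The missing idea, which is the actual heart of the paper's proof (Lemma \ref{lem_dim}), is a virtual dimension count summed over \emph{all} vertices of the dual graph combined with del Pezzo positivity: each pushed-forward class $\nu_\star\beta_v$ is effective, so $c_1(S)\cdot\nu_\star\beta_v\geq 0$ with equality if and only if the class vanishes. (Note that this Fano input is essential and is not implied by the log Calabi--Yau structure you single out; the latter only gives $\beta\cdot D=m_\beta+1$.) Summing the per-vertex dimension constraints produces an identity in which $1$ equals a sum of manifestly nonnegative terms, among them the number $|V_0(\Gamma)|\geq 1$ of vertices in the $S$-component, the quantities $c_1(S)\cdot\nu_\star\beta_v$ for the remaining vertices, and the terms $w_e-1$ and $\deg_\CC\sigma_{v,e}$ for edges at $S$-vertices. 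This forces, all at once: a single vertex on $S$ carrying all of $\beta$, all edge weights equal to $1$, trivial splitting insertions, fiber classes at the other vertices, and---together with rigidity and genus zero of the graph---the absence of any configuration touching the corners $D_i\cap D_j$, which is exactly the verification you leave open. So as written your proof does not go through, but the repair is local: replace (a)$+$(c) by this dimension/positivity argument, after which your outline (decomposition, $\lambda_g$ tree-vanishing, gluing, genus concentration at the $S$-vertex) completes the proof as in the paper.
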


The proof of Theorem \ref{thm_absolute_relative_0} takes the remainder of \S\ref{section_log_absolute}.
To do this, we consider the degeneration to the normal cone of $D$ in $S$. 
This degeneration is constructed explicitly using toric geometry as follows. 
For every ray $\rho$ of the fan $\Sigma_S$ of $S$ in $\RR^2$, denote by $u_\rho$ the primitive
integral point on $\rho$.
Let $\mathscr{P}_S$ be the polyhedral decomposition of $\RR^2$ obtained from $\Sigma_S$ by adding edges connecting the points $u_\rho$ together -- see Figure \ref{Fig9}. The polyhedral decomposition $\mathscr{P}_S$ determines as in 
\cite[\S 3]{NS}
a toric degeneration $\pi: \mathscr{S} \rightarrow \CC$, such that $\pi^{-1}(t)=S$ for $t \neq 0$ and the central fiber $\mathscr{S}_0 := \pi^{-1}(0)$ has dual intersection complex $\mathscr{P}_S$. In particular, the decomposition of $\mathscr{S}_0$ into irreducible components is given by 
\[ \mathscr{S}_0 =S \cup \bigcup_\rho \PP_\rho\,,\]
where $\PP_\rho$ are toric surfaces that are $\PP^1$-bundles over the divisors $D_\rho$ in $S$ corresponding to the rays $\rho$ of $\Sigma_S$.

\begin{figure}[htb]
\begin{center}
\includegraphics[height=1.6in,width=1.6in,angle=0]{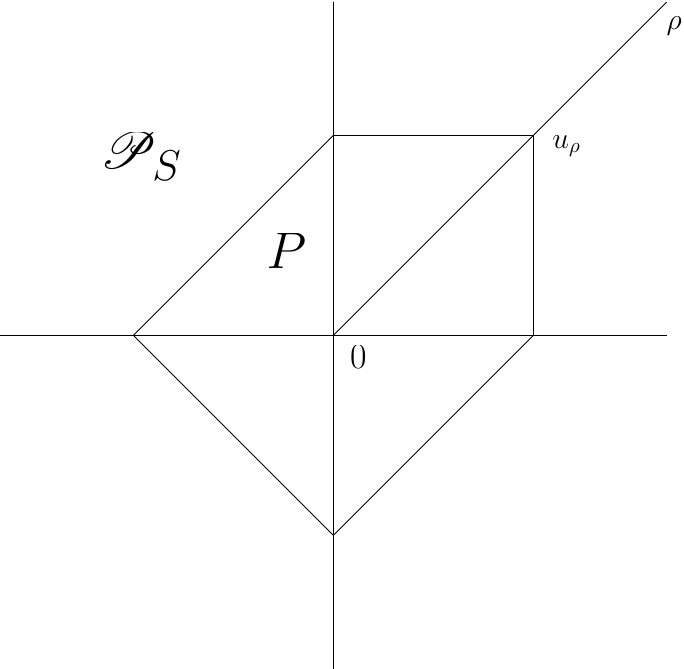}
\caption{The polyhedral decomposition $\mathscr{P}_S$.}
\label{Fig9}
\end{center}
\end{figure}


We equip the total space $\mathscr{S}$ of the degeneration with the divisorial log structure defined by the central fiber $\mathscr{S}_0$, and $\CC$ with the divisorial log structure defined by $\{0\} \subset \CC$, so that the morphism $\pi:\mathscr{S} \rightarrow \CC$ naturally lifts to a log smooth morphism. The tropicalization of $\mathscr{S}$ is the cone over the compact polygon $P \subset \RR^2$ with vertices the points $u_\rho$. 
Finally, we specialize the $m_\beta$ point constraints for $t \neq 0$ to $m_\beta$ point constraints lying entirely within the irreducible component of the central fiber isomorphic to $S$.

By the decomposition formula in log Gromov--Witten theory of \cite[Theorem 5.4]{ACGS_decomposition}, applied to the log smooth degeneration $\pi: \mathscr{S} \rightarrow \CC$, we have 
\begin{equation} \label{eq_decomposition}
GW_{g,\beta}^S = \sum_{h: \Gamma \rightarrow P} \frac{n_h}{|\mathrm{Aut}(h)|} GW_h^{\mathscr{S}_0}\,,\end{equation}
where the sum is over the rigid decorated tropical curves $h:
\Gamma \rightarrow P$, of total genus $g$, total curve class $\beta$, with $m_\beta$ legs. 
The term $GW_h^{\mathscr{S}_0}$ is the log Gromov--Witten invariant of the central fiber $\mathscr{S}_0$ endowed with the restricted log structure from $\mathscr{S}$, given by 
\[ GW_h^{\mathscr{S}_0} = \int_{[\overline{M}_h(\mathscr{S}_0)]^{\vir}} (-1)^g \lambda_g \,,\]
where $\overline{M}_h(\mathscr{S}_0)$ is the moduli space of $h$-marked stable log maps to $\mathscr{S}_0$ passing through the $m_\beta$ point constraints imposed at the marked points corresponding to the $m_\beta$ legs of $\Gamma$ -- see \cite[Definition 2.31]{ACGS_decomposition}. Moreover, $n_h \in \ZZ_{\geq 1}$ is the smallest positive integer such that $n_h \cdot h(\Gamma)$ has integral vertices, and $|\mathrm{Aut}(h)|$ is the order of the group of automorphisms of $h$.
By the vanishing property of $\lambda_g$ reviewed in \cite[Lemma 8]{Bou2019}, we obtain that $GW_h^{\mathscr{S}_0}=0$ unless the graph $\Gamma$ is of genus zero. 

To calculate $GW_h^{\mathscr{S}_0}$ when the graph $\Gamma$ is of genus zero, we refine the polyhedral decomposition of $P$ so that it contains 
$h(\Gamma)$, and we consider the central fiber $\widetilde{\mathscr{S}}_0$ of the corresponding log modification of $\pi: \mathscr{S} \rightarrow \CC$. We denote by $\widetilde{S}^w$ the irreducible components of 
$\widetilde{\mathscr{S}}_0$ labeled by the vertices $w$ 
of the refined polyhedral decomposition. In particular, the irreducible component $\widetilde{S}^0$ corresponding to the origin is a toric blow-up of the irreducible component $S$ of $\mathscr{S}_0$.
We endow every irreducible component $\widetilde{S}^w$
with the divisorial log structure defined by the 
intersection $\partial \widetilde{S}^w$ of $\widetilde{S}^w$ with the singular locus of $\mathscr{S}_0$.

For every vertex $v$ of $\Gamma$, denote by $h_v: \Gamma_v \rightarrow P$ the rigid decorated tropical curve with one vertex obtained as the star of $v$ in $h: \Gamma \rightarrow P$. 
The image $h(v)$ is a vertex of the refined polyhedral decomposition, and so corresponds to an irreducible component $\widetilde{S}^{h(v)}$ of $\widetilde{\mathscr{S}}_0$. 
Let $\overline{M}_{h_v}(\widetilde{S}^{h(v)})$ be the moduli space of $h_v$-marked stable log maps
to $\widetilde{S}^{h(v)}$ passing through the point constraints imposed at the marked points corresponding to the legs of $\Gamma$ adjacent to $v$.
For every edge $e$ of $\Gamma$, denote by $\widetilde{D}^{h(e)}$ the irreducible component of the singular locus of $\widetilde{\mathscr{S}}_0$ corresponding to the edge $h(e)$ of the refined polyhedral decomposition of $P$.
We denote by $1_{h(e)} \in H^0(\widetilde{D}^{h(e)})$ the unit in cohomology and by $\mathrm{pt}^{h(e)} \in H^2(\widetilde{D}^{h(e)})$ the class of a point in $\widetilde{D}^{h(e)}$.

A \emph{splitting data} $\sigma$ assigns to each half-edge $(v,e)$ of $\Gamma$ a cohomology class $\sigma_{v,e}\in H^\star(\widetilde{D}_e)$, such that, for every edge $e$ adjacent to vertices $v$ and $v'$, 
we have either $\sigma_{v,e}=1_{h(e)}$ and $\sigma_{v',e}=\mathrm{pt}_{h(e)}$, or $\sigma_{v,e}=\mathrm{pt}_{h(e)}$ and 
$\sigma_{v',e}=1_{h(e)}$.
For every vertex $v$ of $\Gamma$, with genus decoration $g_v$, and for every splitting data $\sigma$, we define a log Gromov--Witten invariant 
\[ GW_{h,v,\sigma}=\int_{[\overline{M}_{h_v}(\widetilde{S}^{h(v)})]^{\vir}}(-1)^{g_v} \lambda_{g_v} \prod_{v\in e} \mathrm{ev}_e^\star(\sigma_{v,e})\,,\]
where the product is over the edges of $\Gamma$ adjacent to $v$, which are viewed as legs of $\Gamma_v$, and $\mathrm{ev}_e: \overline{M}_{h_v}(\widetilde{S}^{h(v)}) \rightarrow \widetilde{D}^{h(e)}$ are the evaluation morphisms at the corresponding marked points.

As in the proof of \cite[Proposition 13]{Bou2019}, we obtain the following gluing formula
\begin{equation}\label{eq_gluing}
GW_h^{\mathscr{S}_0}
= \sum_{\sigma} \prod_e w_e \prod_{v} GW_{h,v,\sigma} \,,\end{equation}
where the sum is over the splitting data $\sigma$, and $w_e$ are the weights of the edges of $\Gamma$.
We will now restrict the splitting data with possibly non-vanishing contributions using the dimension constraint for the log Gromov--Witten invariants $GW_{h,v,\sigma}$. In order to have $GW_{h,v,\sigma} \neq 0$, the dimension of the virtual class
$[\overline{M}_{h_v}(\widetilde{S}^{h(v)})]^{\vir}$ 
should match with the degree of the insertions, that is, 
\begin{equation}\label{eq_dim_constraint}
c_1(\widetilde{S}^{h(v)}) \cdot \beta_v + g_v -1+\sum_{v\in e} (1-w_e) + m_v = g_v + \sum_{v\in e} \mathrm{deg}_\CC\, \sigma_{v,e}\,,\end{equation}
where $m_v$ is the number of legs of $\Gamma$ adjacent to $v$, and $\mathrm{deg}_\CC\, \sigma_{v,e}$ denotes the complex degree of the cohomology class $ \sigma_{v,e}$.

To state the following result, we denote by $V_0(\Gamma)$ the set of vertices $v$ of $\Gamma$ such that $h(v)=0 \in P$, that is, such that $\widetilde{S}^{h(v)}=\tilde{S}^0$. We also denote by $\nu: \widetilde{\mathscr{S}}_0 \rightarrow S$ the morphism given by the composition of $\widetilde{\mathscr{S}}_0 \rightarrow \mathscr{S}_0$ with the natural projection
$\mathscr{S}_0 \rightarrow S$.

\begin{lemma} \label{lem_dim}
If $GW_{h,v,\sigma}\neq 0$, then the following holds:
\begin{itemize}
    \item[i)] For every vertex $v \notin V_0(\Gamma)$, we have $\nu_\star \beta_v=0$.
    \item[ii)] For every edge $e$ adjacent to a vertex $v \in V_0(\Gamma)$, the divisor $\widetilde{D}^{h(e)}$ is not an exceptional divisor of $\widetilde{S}^0 \rightarrow S$. Moreover, we have $w_e=1$ and $\sigma_{v,e}=1\in H^0(\widetilde{D}^{h(e)})$.
    \item[iii)] The set $V_0(\Gamma)$ consists of a single vertex.
\end{itemize}
\end{lemma}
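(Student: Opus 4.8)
The plan is to exploit the dimension constraint \eqref{eq_dim_constraint} together with the non-negativity of each numerical quantity appearing in it, then propagate these local constraints around the tropical curve $\Gamma$ using the global structure of the degeneration. First I would rewrite \eqref{eq_dim_constraint}, cancelling $g_v$ from both sides, into the form
\[
c_1(\widetilde{S}^{h(v)}) \cdot \beta_v - 1 + \sum_{v\in e}\bigl(1-w_e - \mathrm{deg}_\CC\,\sigma_{v,e}\bigr) + m_v = 0\,,
\]
and analyze the components $\widetilde{S}^{h(v)}$ for $v \notin V_0(\Gamma)$. The key observation is that for such $v$, the component $\widetilde{S}^{h(v)}$ is (a toric blow-up of) one of the $\PP^1$-bundles $\PP_\rho$ or a component further out in the tropicalization, and hence is \emph{not} covered by any point constraint: since the $m_\beta$ point constraints were specialized to lie entirely within the component $S = \widetilde{S}^0$, no legs of $\Gamma$ are adjacent to $v \notin V_0(\Gamma)$, so $m_v = 0$ there. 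Using that $\nu$ contracts all components except $\widetilde{S}^0$, I would show that $\nu_\star \beta_v = 0$ is forced — this is part (i) — by observing that any nonzero $\nu_\star \beta_v$ would make $c_1(\widetilde{S}^{h(v)}) \cdot \beta_v$ strictly positive in a way incompatible with the dimension balance once the edge terms $(1-w_e) \leq 0$ and $\mathrm{deg}_\CC\,\sigma_{v,e} \geq 0$ are accounted for.

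For part (ii), I would focus on a vertex $v \in V_0(\Gamma)$ and an adjacent edge $e$. Here the relevant component is the central blow-up $\widetilde{S}^0 \to S$, which carries all $m_\beta = -1 + c_1(S)\cdot\beta$ point constraints. The strategy is a careful bookkeeping of the dimension balance: each term $(1-w_e)$ with $w_e \geq 1$ contributes non-positively, and each $\mathrm{deg}_\CC\,\sigma_{v,e} \in \{0,1\}$ likewise subtracts. I would argue that the point constraints force the curve class $\beta_v$ restricted to $\widetilde{S}^0$ to be essentially all of $\beta$ (its image under $\nu$), and then the exact balance in \eqref{eq_dim_constraint} leaves no slack: any edge meeting an exceptional divisor of $\widetilde{S}^0 \to S$, any edge weight $w_e > 1$, or any point-class insertion $\sigma_{v,e} = \mathrm{pt}$ would violate the equality. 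Concretely, summing the per-vertex dimension constraints over all vertices and comparing with the global dimension $c_1(S)\cdot\beta - 1 + m_\beta$ of the Gromov--Witten problem on $S$ produces a sum of non-negative ``defect'' terms that must vanish termwise, yielding $w_e = 1$, $\sigma_{v,e} = 1$, and the non-exceptionality of $\widetilde{D}^{h(e)}$ simultaneously.

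Part (iii) would then follow by a connectivity-plus-degree argument: since each $v \in V_0(\Gamma)$ sits over the origin and, by (ii), all its adjacent edges have weight one and trivial insertion with the curve class captured by $\nu_\star \beta_v$, while all vertices outside $V_0(\Gamma)$ carry $\nu_\star\beta_v = 0$ by (i), the total class $\beta = \sum_v \nu_\star\beta_v$ is supported on $V_0(\Gamma)$. The rigidity of the tropical curve $h$ together with the requirement that all $m_\beta$ point constraints lie on $\widetilde{S}^0$ forces the legs — hence the point constraints — to concentrate at a single vertex; were there two distinct vertices in $V_0(\Gamma)$, the tropical curve would have an edge connecting them lying along the origin, contradicting rigidity or the genus-zero and class constraints. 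I would make this precise by noting that $V_0(\Gamma)$ maps to the single point $0 \in P$, so any two such vertices are joined by a chain of edges all mapping to $0$, which cannot happen for a rigid tropical curve unless the vertices coincide.

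The main obstacle I expect is part (ii), specifically the simultaneous extraction of all three conclusions ($w_e = 1$, $\sigma_{v,e} = 1$, non-exceptionality) from a single dimension identity. The delicate point is that these are not independent: one must set up the global dimension count so that the total defect is a sum of manifestly non-negative contributions, each of which individually detects one of the three phenomena. Getting the exceptional-divisor intersection theory on the toric blow-up $\widetilde{S}^0 \to S$ to interact correctly with the contact-order conditions — ensuring that an edge hitting an exceptional divisor genuinely costs positive dimension — is where the argument is most technical and where I would need to invoke the explicit toric structure of $\mathscr{P}_S$ and the refinement, rather than purely formal non-negativity.
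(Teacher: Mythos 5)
Your overall strategy (the dimension constraint \eqref{eq_dim_constraint} plus non-negativity plus a global comparison) is the right one, and your part (ii) is close to what the paper actually does; but parts (i) and (iii) as you propose them rest on arguments that fail. For (i), your argument is local at each vertex $v \notin V_0(\Gamma)$, and the local constraint cannot force $\nu_\star \beta_v = 0$: after cancelling $g_v$ and using $m_v=0$, it reads
\[
c_1(\widetilde{S}^{h(v)}) \cdot \beta_v - 1 + \sum_{v \in e} (1 - w_e) \;=\; \sum_{v \in e} \deg_\CC \sigma_{v,e}\,,
\]
and a strictly positive left-hand side can always be absorbed by point-class insertions $\sigma_{v,e} = \mathrm{pt}$ on adjacent edges, each contributing $1$ to the right-hand side; your sign bookkeeping in fact only yields $c_1(\widetilde{S}^{h(v)})\cdot\beta_v \geq 1$, which is no contradiction. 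Moreover your premise that ``$\nu$ contracts all components except $\widetilde{S}^0$'' is false: $\nu$ maps $\PP_\rho$ \emph{onto} the divisor $D_\rho \subset S$, so a section-type class in $\PP_\rho$ has nonzero effective pushforward $\nu_\star\beta_v$ — which is exactly why (i) needs a proof. The paper's argument is genuinely global: it sums \eqref{eq_dim_constraint} only over $v \in V_0(\Gamma)$, rewrites $c_1(\widetilde{S}^0)\cdot\beta_v = c_1(S)\cdot\nu_\star\beta_v - \sum_{e \in Ex(v)} w_e$ using the toric blow-up structure, and then substitutes $\beta = \sum_v \nu_\star\beta_v$ and $c_1(S)\cdot\beta = m_\beta+1$ to obtain the single identity \eqref{eq_dim_3}, in which $1$ is expressed as a sum of manifestly non-negative terms: $\sum_{v \notin V_0(\Gamma)} c_1(S)\cdot\nu_\star\beta_v$ (non-negative by effectiveness and the del Pezzo hypothesis, with equality iff the class vanishes), $|V_0(\Gamma)|$, the defects $\deg_\CC\sigma_{v,e}+w_e-1$, and the exceptional weights. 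Since $m_\beta \geq 1$ forces at least one leg, hence at least one vertex, into $V_0(\Gamma)$, all other terms must vanish, and (i), (ii), (iii) all drop out of this one equation simultaneously.

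For (iii), your graph-theoretic argument is also incorrect: two vertices of $V_0(\Gamma)$ need not be joined by edges mapping to $0 \in P$. Since $\Gamma$ is a tree they are joined by a unique path, but that path can pass through vertices mapping to the points $u_\rho$, so neither rigidity nor the genus-zero condition is violated; configurations with $|V_0(\Gamma)| \geq 2$ are excluded only because $|V_0(\Gamma)|$ itself appears as one of the non-negative terms summing to $1$ in \eqref{eq_dim_3}. Finally, a caution on your version of (ii): summing the constraints over \emph{all} vertices, as you propose, requires comparing $c_1(\widetilde{S}^{h(v)})\cdot\beta_v$ on the outer components $\PP_\rho$ with intersection numbers on $S$, which has no clean formula; the paper's restriction of the sum to $V_0(\Gamma)$, pushing the outer vertices into the identity only through the effective classes $\nu_\star\beta_v$, is precisely what makes the bookkeeping work.
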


\begin{proof}
If $GW_{h,v,\sigma}\neq 0$, then, summing the equalities given by \eqref{eq_dim_constraint}, we obtain that 
\begin{equation} \label{eq_dim_1}
\sum_{v \in V_0(\Gamma)} c_1(\widetilde{S}^0) \cdot \beta_v  = |V_0(\Gamma)|+m_\beta +\sum_{\substack{v\in V_0(\Gamma)\\ v\in e}} (\deg_\CC \sigma_{v,e} +w_e -1)\,.\end{equation}
For every $v\in V_0(\Gamma)$, denote by $Ex(v)$ the set of edges $e$ adjacent to $v$ such that $\widetilde{D}^{h(e)}$ is an exceptional divisor of $\widetilde{S}^0 \rightarrow S$. 
Since the surface $\widetilde{S}^0$ is a toric blow-up of $S$, we have 
\[ c_1(\widetilde{S}^0) \cdot \beta_v = c_1(S) \cdot \nu_\star \beta_v - \sum_{e \in Ex(v)} w_e \,.\]
Hence, \eqref{eq_dim_1} can be rewritten as:
\begin{equation} \label{eq_dim_2}
\sum_{v\in V_0(\Gamma)} c_1(S) \cdot \nu_\star \beta_v = |V_0(\Gamma)|+m +\sum_{\substack{v\in V_0(\Gamma)\\ v\in e}} (\deg_\CC\, \sigma_{v,e} +w_e -1) +\sum_{\substack{v\in V_0(\Gamma)\\ e\in Ex(v)}} w_e\,.
\end{equation}
Using that $\beta = \sum_v \nu_\star \beta_v$, and that we have $c_1(S) \cdot \beta = m_\beta+1$, we obtain:
\begin{equation} \label{eq_dim_3}
1 = \sum_{v \notin V_0(\Gamma)}c_1(S) \cdot \nu_\star \beta_v + |V_0(\Gamma)|+\sum_{\substack{v\in V_0(\Gamma)\\ v\in e}} (\deg_\CC \sigma_{v,e} +w_e -1) +\sum_{\substack{v\in V_0(\Gamma)\\ e\in Ex(v)}} w_e\,. \end{equation}
Since every class $\nu_\star \beta_v$ is effective and $S$ is a del Pezzo surface, we obtain that $c_1(S) \cdot \nu_\star \beta_v \geq 0$, with equality if and only if $\nu_\star \beta_v=0$. 
By \eqref{eq_positive_m}, we have $m_\beta \geq 1$, and so we necessarily have $|V_0(\Gamma)|\geq 1$ since the $m_\beta$ point constraints are imposed in 
$\widetilde{S}^0$. 
Since all the other terms in the right-hand side of \eqref{eq_dim_3} are nonnegative, 
we obtain that the following holds: $\nu_\star \beta_v=0$ for every $v\notin V_0(\Gamma)$, $|V_0(\Gamma)|=1$, $\mathrm{deg}_\CC\, \sigma_{v,e}=0$, $w_e=1$, and $Ex(v)=\emptyset$ for all $v\in V_0(\Gamma)$ and $v\in e$. This concludes the proof of Lemma \ref{lem_dim}.
\end{proof}

If $GW_{h,v,\sigma} \neq 0$, then, since $h:\Gamma \rightarrow P$ is a rigid tropical curve, and $\Gamma$ is a graph of genus zero, we obtain from Lemma \ref{lem_dim} that the following holds:
\begin{itemize}
    \item[i)] There exists a unique vertex $v_0$ of $\Gamma$ such that $h(v_0)=0$.
    \item[ii)] For every ray $\rho$ of $\Sigma_S$, there exist exactly $\beta \cdot D_\rho$ vertices $v_{\rho, j}$, $1\leq j \leq \beta \cdot D_\rho$, such that $h(v_{\rho,j})=u_\rho$.
    \item[iii)] Vertices of $\Gamma$ are exactly $v_0$ and $(v_{\rho,j})_{\rho,1\leq j \leq \beta \cdot D_\rho}$.
    \item[iv)] For every ray $\rho$ of $\Sigma_S$ and $1\leq j \leq \beta \cdot D_\rho$, there exists a unique edge $e_{\rho,j}$ connecting $v_0$ and $v_{\rho,j}$. Moreover, we have $w_{e_{\rho,j}}=1$, and $\Gamma$ does not contain any other edges.
    \item[v)] $\beta_{v_0}=\beta$, and, for every ray $\rho$ of $\Sigma_S$ and $1\leq j \leq \beta \cdot D_\rho$, the class $\beta_{v_{\rho,j}}$ is the class of a $\PP^1$-fiber of $\mathbb{P}_\rho \rightarrow D_\rho$.
\end{itemize}
In particular, the decorated tropical curve $h:\Gamma \rightarrow P$ is uniquely determined up to the genus decorations. Moreover, we have $\sigma_{v_0, e_{\rho,j}}=1_{u_\rho}$
and $\sigma_{v_{\rho,j}, e_{\rho,j}}=\mathrm{pt}_{u_\rho}$, and so the splitting data $\sigma$ is also uniquely determined. 
By \cite[Lemma 15]{Bou2019}, we have $GW_{h,v_{\rho,j},
\sigma}=0$ if $g_{v_{\rho,j}}>0$ and $GW_{h,v_{\rho,j},
\sigma}=1$ if $g_{v_{\rho,j}}=0$. Hence, the genus decoration is uniquely 
determined to be $g_{v_0}=g$ and $g_{v_{\rho,j}}=0$.
Consequently, the gluing formula \eqref{eq_gluing}
and the decomposition formula \eqref{eq_decomposition} 
reduce to 
\[ GW_{g,\beta}^S = GW_{g,\beta}^{S/D}\,,\]
and this concludes the proof of Theorem \ref{thm_absolute_relative_0}.

\subsubsection{BPS and Block-G\"ottsche polynomials of toric del Pezzo surfaces}

The following result shows that BPS polynomials recover  
Block-G\"ottsche polynomial for toric del Pezzo surfaces.

\begin{theorem} \label{thm_absolute_relative}
Let $S$ be a smooth toric del Pezzo surface, and $\beta \in H_2(S,\ZZ)$
such that $\beta \cdot D_j \geq 0$ for every toric divisor $D_j$ of $S$.
Then, the BPS polynomial $BPS_\beta^S(q)$ is equal to the Block-G\"ottsche polynomial $BG_\beta^S(q)$:
\[ BPS_\beta^S(q) = BG_\beta^S(q) \,.\]
\end{theorem}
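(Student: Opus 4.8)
The plan is to reduce Theorem \ref{thm_absolute_relative} to the combination of two inputs already established in the excerpt: Theorem \ref{thm_absolute_relative_0}, which identifies absolute and log Gromov--Witten invariants of $S$ with $\lambda_g$-insertion, and the main result of \cite{Bou2019}, which expresses the Block--G\"ottsche polynomial $BG_\beta^S(q)$ in terms of exactly the log Gromov--Witten invariants $GW_{g,\beta}^{S/D}$ appearing in Theorem \ref{thm_absolute_relative_0}. On the other side, Lemma \ref{lem_X_S} expresses the BPS polynomial $BPS_\beta^S(q)$, via the change of variables $q=e^{iu}$, in terms of the absolute invariants $GW_{g,\beta}^S$. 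Since both polynomials are packaged from the same genus-$g$ numbers through the identical generating-series recipe, the equality should follow by matching the two generating functions term by term.

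Concretely, first I would recall the precise statement from \cite{Bou2019}: it gives
\[ BG_\beta^S(q) = \left(2\sin\!\left(\tfrac{u}{2}\right)\right)^{1-m_\beta}\sum_{g\geq 0} GW_{g,\beta}^{S/D}\, u^{2g-1+m_\beta}, \]
under $q=e^{iu}$, so that the log invariants assemble into $BG_\beta^S(q)$ through the very same substitution and prefactor that Lemma \ref{lem_X_S} uses for $BPS_\beta^S(q)$. I would state this as the key reference fact, being careful that the normalization of $m_\beta$, the number and contact structure of the marked points along the toric boundary $D$, and the $(-1)^g\lambda_g$-insertion all match the conventions fixed in \S\ref{section_log_absolute}; this compatibility is precisely why Theorem \ref{thm_absolute_relative_0} was set up with that insertion and those point constraints.

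The body of the proof is then short. By Theorem \ref{thm_absolute_relative_0} we have $GW_{g,\beta}^S = GW_{g,\beta}^{S/D}$ for every $g\geq 0$. Substituting this equality into the expression \eqref{eq_bps_S} from Lemma \ref{lem_X_S},
\[ BPS_\beta^S(q) = \left(2\sin\!\left(\tfrac{u}{2}\right)\right)^{1-m_\beta}\sum_{g\geq 0} GW_{g,\beta}^S\, u^{2g-1+m_\beta}, \]
turns the right-hand side into exactly the generating series that \cite{Bou2019} identifies with $BG_\beta^S(q)$. Hence $BPS_\beta^S(q)=BG_\beta^S(q)$, as claimed. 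One should also confirm the hypotheses align: the condition $\beta\cdot D_j\geq 0$ for all toric divisors guarantees, via \eqref{eq_positive_m}, that $m_\beta\geq 1$, which is needed both for the log invariants with boundary contacts to be defined and for the prefactor exponents to agree.

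The main obstacle, and the only real content beyond bookkeeping, is ensuring that the normalization conventions in \cite{Bou2019} match those used here exactly — in particular the ordered-versus-unordered marked points along each $D_i$, the factor $n_h/|\mathrm{Aut}(h)|$ absorbed in the degeneration, and the sign/power conventions in the $q=e^{iu}$ substitution. If \cite{Bou2019} states its result with a slightly different packaging (for instance in terms of $BG_\beta^S(q)$ directly rather than the generating series above), I would insert a brief lemma verifying that its normalization coincides with \eqref{eq_bps_S}; this is where any genuine care is required, since the rest is a direct substitution of Theorem \ref{thm_absolute_relative_0}.
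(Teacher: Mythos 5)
Your proposal is correct and follows essentially the same route as the paper: cite \cite[Theorem 1]{Bou2019} to express $BG_\beta^S(q)$ as the generating series of the log invariants $GW_{g,\beta}^{S/D}$ (your formula with exponent $1-m_\beta$ agrees with the paper's \eqref{eq_BG} since $m_\beta = -1+\beta\cdot D$), substitute the identity $GW_{g,\beta}^S = GW_{g,\beta}^{S/D}$ from Theorem \ref{thm_absolute_relative_0}, and match against \eqref{eq_bps_S} of Lemma \ref{lem_X_S}. Your extra care about normalization conventions is reasonable but not needed as a separate lemma; the conventions in \S\ref{section_log_absolute} were set up precisely so that the two generating series coincide.
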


\begin{proof}
By \cite[Theorem 1]{Bou2019}, we have
\begin{equation}\label{eq_BG}
BG_\beta^S(q)= \left( 2 \sin \left( \frac{u}{2} \right)\right)^{2-\beta \cdot D} \sum_{g \geq 0} GW_{g,\beta}^{S/D} u^{2g-2+\beta \cdot D}\,.\end{equation}
By Theorem \ref{thm_absolute_relative_0}, we have $GW_{g,\beta}^{S/D}=GW_{g,\beta}^S$, and so the result follows from
\eqref{eq_bps_S} in Lemma \ref{lem_X_S} describing the BPS polynomials $BPS_\beta^S(q)$ in terms of the Gromov--Witten invariants $GW_{g,\beta}^S$.
\end{proof}

\subsection{Towards Welschinger invariants from BPS polynomials at $q=-1$}
\label{section_conjecture}

Recall that, for every $n \in \ZZ_{\geq 0}$, we denote by $S_n$ a smooth projective surface over $\CC$ obtained by blowing up $n$ general points in $\PP^2$. 
We consider a curve class $\beta \in H_2(S_n,\ZZ)$ such that $m_\beta:=-1+c_1(S_n) \cdot \beta \geq 0$. 
In \S \ref{section_complex}, we reviewed
the definition of the Gromov--Witten count $GW_{0,\beta}^{S_n} \in \ZZ_{\geq 0}$ of complex rational curves in $S_n$ of class $\beta$ passing through $m_\beta$ points in general position.
When the $n$ blown-up points in $\PP^2$ are real,
$S_n$ is naturally a real surface and we described in 
\S \ref{section_real} the Welschinger count $W_\beta^{S_n} \in \ZZ$ of real rational curves in $S_n$ of class $\beta$ passing through $m_\beta$ real points in general position.

When $n \leq 3$, the surface $S_n$ is toric, allowing tropical geometry to be used as in \S \ref{section_BG} to define Block-G\"ottsche polynomials $BG_\beta^{S_n}(q) \in \ZZ_{\geq 0}[q^{\pm \frac{1}{2}}]$.
These polynomials have the remarkable property to interpolate between the complend the real Welschinger counts: we have 
$BG_\beta^{S_n}(1)=GW_{0,\beta}^{S_n}$ and $BG_\beta^{S_n}(-1)=W_\beta^{S_n}$ by \eqref{eq_BG_interpolation}. 

More generally, for any $n \in \ZZ_{\geq 0}$, Definition \ref{Defn:BPSS} introduces BPS polynomials $BPS_\beta^{S_n}(q) \in \ZZ[q^\pm]$, which by Theorem \ref{thm_absolute_relative} coincide with the Block-G\"ottsche polynomials $BG_\beta^{S_n}(q)$ when the latter are defined, that is for $n \leq 3$.
This naturally raises the question of whether the BPS polynomials continue to interpolate between the complex Gromov--Witten counts and the real Welschinger counts. By Corollary \ref{cor_q1}, we know that $BPS_\beta^{S_n}(1)=GW_{0,\beta}^{S_n}$. 
The relation with Welschinger invariants is more elusive since the BPS polynomials are defined using counts of higher genus complex curves with no reference to real geometry. Nevertheless, we conjecture that the specialization of the BPS polynomials at $q=-1$ recover the Welschinger invariants, thereby generalizing the interpolation property of Block--G\"ottsche polynomials:

\begin{conjecture} \label{conj_bps_w}
For every $n \in \ZZ_{\geq 0}$, let $S_n$ be the blow-up of $\PP^2$ at $n$ general real points, and $\beta \in H_2(S_n,\ZZ)$ be a curve class such that $m_\beta:=-1+c_1(S_n)\cdot \beta \geq 0$.
Then, the specialization at $q=-1$ of the BPS polynomial $BPS_\beta^{S_n}$ coincides with the Welschinger count $W_\beta^{S_n}$ of real rational curves in $S_n$ passing through $m_\beta$ real points in general position:
    \[ BPS_{\beta}^{S_n}(-1)=W_{\beta}^{S_n} \,.\]
\end{conjecture}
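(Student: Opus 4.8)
The plan is to deduce the absolute identity for \emph{every} $n$ from the relative statement of Theorem \ref{thm: relative bps welschinger intro}, which already holds for all $n \geq 0$, by upgrading the Abramovich--Bertram--Vakil degeneration to a formula valid in full generality on both the complex-refined and the real side, and then matching the two recursions by induction. The engine is Theorem \ref{thm: relative bps welschinger intro}: for all $n$ it secures the $q=-1$ coincidence $BPS_{\beta,(\mu,\nu)}^{\widetilde{S}_n/\widetilde{\mathfrak{C}}}(-1) = \big(\prod_j v_j/[\nu_j]_\RR\big)\, W_{\beta,(\mu,\nu),\mathbf{x}}^{\widetilde{S}_n/\widetilde{\mathfrak{C}}}$ of the refined complex and real counts in the relative geometry $(\widetilde{S}_n,\widetilde{\mathfrak{C}})$, via Brugall\'e's floor diagrams and \cite[Theorem 3.12]{BrugFloorconic}. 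The task is therefore to transport this equality from the relative to the absolute setting for all $n$, not merely for $n \leq 6$.

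First I would extend the refined ABV formula of Theorem \ref{thm_refined_ABV}, proved there only for $n=6$, to arbitrary $n$. Consider the degeneration of $\PP^2$ to the normal cone of the conic $\mathfrak{C}$, whose central fiber is $\PP^2 \cup_{\mathfrak{C}} \mathbb{F}_4$ --- the normal bundle of $\mathfrak{C}$ being $\cO(4)$ --- blown up at the $n$ points. Running the log/relative degeneration formula of \cite[Theorem 5.4]{ACGS_decomposition} in higher-genus Gromov--Witten theory with the $\lambda_g$ insertion, exactly as in the proof of Theorem \ref{thm_absolute_relative_0}, I would express $BPS_\beta^{S_n}(q)$ as an explicit finite sum of products of relative BPS polynomials $BPS_{\beta',(\mu,\nu)}^{\widetilde{S}_n/\widetilde{\mathfrak{C}}}(q)$ and vertex contributions of the $\mathbb{F}_4$-components. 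As in Lemma \ref{lem_dim}, the $\lambda_g$ class forces every contributing stratum to have genus zero and annihilates multiple-cover contributions, so the refined formula should retain the shape of the classical one, with the edge and vertex weights replaced by their $q$-deformations.

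On the real side I would establish the parallel real ABV recursion whose summands are the relative Welschinger counts $W_{\beta',(\mu,\nu),\mathbf{x}}^{\widetilde{S}_n/\widetilde{\mathfrak{C}}}$. For $n \leq 8$, where $S_n$ is del Pezzo and Welschinger invariants are algebraic, this is the content of \cite{IKS2, BP1, BP2}. For $n \geq 9$, where Welschinger invariants are genuinely symplectic, the degeneration must be carried out in the symplectic/exploded-manifold framework of \cite{parker} and its real refinement, producing a real symplectic-sum formula in which the Welschinger signs --- i.e.\ the isolated-node counts --- are correctly reproduced by the glued real curves. Setting $q=-1$ in the refined formula, invoking Theorem \ref{thm: relative bps welschinger intro} term by term, and comparing with the real recursion then yields $BPS_\beta^{S_n}(-1)=W_\beta^{S_n}$ by induction on $m_\beta$ (equivalently on the degree $d$), with the exceptional and small-degree cases pinned down by Lemma \ref{lem_ex}, Lemma \ref{lem_blow_up}, and the known base values of Examples \ref{ex_cubic_real} and \ref{ex_3240_real}.

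The hard part is the real degeneration formula for $n \geq 9$: in that range Welschinger invariants admit no algebraic model, so the entire real recursion must be built symplectically and the signs tracked through a real symplectic-sum formula --- precisely the symplectic input the paper flags as necessary. A secondary but substantial difficulty is ensuring that the refined ABV formula of the first step genuinely closes up into relative BPS polynomials for all $n$: for $n \geq 7$ the $\mathbb{F}_4$-vertex combinatorics grows rapidly \cite[\S6--\S7]{BrugFloorconic}, and one must show that no essentially new vertex contributions appear that fail to reorganize into the relative invariants governed by Theorem \ref{thm: relative bps welschinger intro}.
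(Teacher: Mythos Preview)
The statement is a \emph{conjecture}, and the paper does not claim a proof for general $n$: it establishes only the cases $n \leq 6$ in Theorem \ref{thm_main}, and explicitly flags $n\geq 9$ as requiring symplectic input. So there is no ``paper's own proof'' of the full statement to compare against. For the range $n\leq 6$, your outline --- refined ABV (Theorem \ref{thm_refined_ABV}) plus the real ABV of \cite{BP1,BP2,IKS2} plus Theorem \ref{thm: relative_BPS_W} term by term, then Lemma \ref{lem_blow_up} --- is exactly the paper's argument for Theorem \ref{thm_main}.

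Beyond $n=6$ your proposal has genuine gaps. First, you conflate two distinct degenerations. The one you describe --- the normal cone of $\mathfrak{C}$ blown up along $n$ sections --- is the floor-diagram degeneration of \S\ref{subsub: a degeneration}; its general fiber is $\widetilde{S}_n$, not $S_n$, so it computes the relative invariants of $(\widetilde{S}_n,\widetilde{\mathfrak{C}})$, not $BPS_\beta^{S_n}$. The ABV step is a \emph{different} degeneration, moving the $n$ general blown-up points onto a conic, and its output depends crucially on $\widetilde{\mathfrak{C}}^2=4-n$. Theorem \ref{thm_refined_ABV} with its clean binomial weights is specific to the $(-2)$-curve case $n=6$; for $n\geq 7$ the strict transform has $\widetilde{\mathfrak{C}}^2\leq -3$ and the degeneration formula acquires new strata (multiple covers of the negative curve, more complicated bubbling), so the formula does not extend verbatim. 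The paper's remark that ``the methods extend naturally to $n=7,8$'' refers to the more elaborate floor-diagram combinatorics in \cite[\S6--\S7]{BrugFloorconic}, not to the single degeneration you sketch. Second, the ``induction on $m_\beta$'' is not how the argument runs: the ABV formula expresses $BPS_\beta^{S_6}$ directly as a finite sum of relative invariants, not in terms of absolute invariants of smaller $m_\beta$, so there is no inductive step on degree. Third, as you already note, for $n\geq 9$ the real recursion would require a real symplectic-sum formula tracking Welschinger signs through a non-algebraic deformation; this is not available in the literature and constitutes the essential obstruction the paper itself identifies.
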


Conjecture \ref{conj_bps_w} holds when $n \leq 3$, that is when $S_n$ is toric, since the BPS polynomials coincide with the Block-G\"ottsche polynomials in this situation by Theorem \ref{thm_absolute_relative}. 
In Theorem \ref{thm_main}, we prove Conjecture \ref{conj_bps_w} for all $n \leq 6$, thereby providing strong evidence for the validity of the conjecture beyond the toric case in general. 

We also show below that Conjecture \ref{conj_bps_w} holds for any $n$ when $\beta$ is the class of a real exceptional curve, that is, $\beta=[E]$, where $E$ is a real curve in $S_n$ isomorphic to $\PP^1$ with its standard real structure, meaning $\PP^1(\RR)=\RR \PP^1$, and 
satisfying $E^2=-1$. 
In this case, by the adjunction formula, we have $c_1(S_n)\cdot \beta=1$  and so $m_\beta=0$.

\begin{theorem}
    For every $n \in \ZZ_{\geq 0}$, let $S_n$ be the blow-up of $\PP^2$ at $n$ general real points, 
    and $\beta \in H_2(S_n,\ZZ)$ a real exceptional curve class. Then, we have $GW_{0,\beta}^{S_n}=W_\beta^{S_n}=1$ and $BPS_\beta^{S_n}(q)$ is the constant polynomial equal to $1$. In particular, Conjecture \ref{conj_bps_w} holds for all real exceptional curve classes.
\end{theorem}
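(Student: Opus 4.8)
The plan is to establish the three claimed equalities separately, exploiting that the hypothesis $m_\beta=0$ means no point constraints are imposed, and that a real exceptional curve class $\beta=[E]$ is represented by a unique rigid rational curve.

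First I would handle the complex count $GW_{0,\beta}^{S_n}=1$. Since $E\simeq\PP^1$ satisfies $E^2=-1$ and $c_1(S_n)\cdot\beta=1$ by adjunction, the curve $E$ is rigid: by the standard deformation theory of $(-1)$-curves, $E$ is the unique effective curve in its class, and the associated stable map $\PP^1\xrightarrow{\sim}E$ is an isolated, unobstructed point of the moduli space $\overline{M}_{0,0}(S_n,\beta)$ contributing with multiplicity one. This gives $GW_{0,\beta}^{S_n}=1$. Next, the BPS polynomial: I would invoke Lemma \ref{lem_ex}, which applies verbatim since $\beta=[E]$ is an exceptional curve class on the smooth projective surface $S_n$, yielding $BPS_\beta^{S_n}(q)=1$ as a constant polynomial. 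Evaluating at $q=1$ then recovers $BPS_\beta^{S_n}(1)=1=GW_{0,\beta}^{S_n}$ consistently with Corollary \ref{cor_q1}, and specializing at $q=-1$ gives $BPS_\beta^{S_n}(-1)=1$.

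For the real count $W_\beta^{S_n}=1$, I would use that $E$ is a \emph{real} exceptional curve, so the unique curve in class $\beta$ is defined over $\RR$ and isomorphic to $\PP^1$ with its standard real structure $\PP^1(\RR)=\RR\PP^1$. With $m_\beta=0$ there are no incidence conditions, so the set $M_{J,x}(\RR)$ of real rational curves of class $\beta$ consists precisely of this single smooth curve. A smooth real rational curve has no nodes, hence no isolated real nodes, so its Welschinger sign is $w(f)=(-1)^0=+1$. Therefore $W_\beta^{S_n}=\sum_{f}w(f)=1$. Since the Welschinger invariant depends only on $(r,s)$ and not on the configuration (which is empty here) or on $J$, this count is unambiguous.

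Combining the three computations gives $GW_{0,\beta}^{S_n}=W_\beta^{S_n}=1$ and $BPS_\beta^{S_n}(q)=1$, so in particular $BPS_\beta^{S_n}(-1)=1=W_\beta^{S_n}$, which is exactly the assertion of Conjecture \ref{conj_bps_w} for real exceptional curve classes. I do not anticipate a serious obstacle, as the heavy lifting for the BPS side is already done in Lemma \ref{lem_ex}; the only point requiring a little care is confirming that the \emph{real} structure on $E$ forces the unique curve to be nodeless and hence to carry Welschinger sign $+1$, which follows immediately from smoothness of $E$.
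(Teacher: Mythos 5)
Your proposal is correct and follows essentially the same route as the paper's own proof: uniqueness and rigidity of the $(-1)$-curve gives $GW_{0,\beta}^{S_n}=1$, smoothness of the real curve $E$ (hence absence of isolated real nodes) gives Welschinger sign $+1$ and $W_\beta^{S_n}=1$, and Lemma \ref{lem_ex} supplies $BPS_\beta^{S_n}(q)=1$. The only difference is that you spell out the deformation-theoretic unobstructedness behind the multiplicity-one contribution, which the paper leaves implicit.
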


\begin{proof}
Since $\beta$ is a real exceptional curve class, there exists a unique rigid curve $E \simeq \PP^1$ of class $\beta$, and so $GW_{0,\beta}^{S_n}=1$. Moreover, the real locus of $E$ is isomorphic to $\RR \PP^1$, which does not contain any node. Thus, the Welschinger sign of $E$ is positive, and so we obtain $W_\beta^{S_n}=1$. Finally, we have $BPS_\beta^{S_n}(q)=1$ by Lemma \ref{lem_ex}.
\end{proof}

\section{Relative BPS polynomials and floor diagrams}
\label{sec: relative BPS}
Let $\mathfrak{C}$ be a smooth conic in $\PP^2$. For every 
$n \in \ZZ_{\geq 0}$, we denote by $\widetilde{S}_n$ the blow-up of $\PP^2$ at $n$ general points on $\mathfrak{C}$, and 
by $\widetilde{\mathfrak{C}}$ the strict transform of $\mathfrak{C}$ in $\widetilde{S}_n$. 

In this section, we study the enumerative geometry of curves in the pair $(\widetilde{S}_n, \widetilde{\mathfrak{C}})$. In \S \ref{sec:relative_bps}, we introduce higher genus relative Gromov--Witten invariants and relative BPS polynomials of $(\widetilde{S}_n, \widetilde{\mathfrak{C}})$. 
In \S \ref{sec_floor}, following \cite{BrugFloorconic}, we review the combinatorics of floor diagrams that describe
curves in a degeneration of $(\widetilde{S}_n, \widetilde{\mathfrak{C}})$. 
In \S \ref{sec_relative_floor}, we show that the higher genus relative Gromov--Witten invariants and relative BPS polynomials of $(\widetilde{S}_n, \widetilde{\mathfrak{C}})$
can be computed using refined counts of floor diagrams. Using this result, we prove in \S\ref{sec_relative_bps_w} that the specialization at $q=-1$ of the relative BPS polynomials is given by Welschinger counts of real rational curves in $(\widetilde{S}_n, \widetilde{\mathfrak{C}})$. Finally, explicit examples are presented in \S \ref{sec_examples}.

\subsection{Relative BPS polynomials}
\label{sec:relative_bps}

We define relative higher genus Gromov--Witten invariants of the pair $(\widetilde{S}_n,\widetilde{\mathfrak{C}})$.
Fix $\beta \in H_2(\widetilde{S}_n,\ZZ)$. 
Let $\mu = (\mu_j)_{1\leq j \leq \ell(\mu)}$
and $\nu=(\nu_j)_{1\leq j \leq \ell(\nu)}$
be two ordered partitions such that 
\begin{equation} \label{eq_mu_nu}
\sum_{i=1}^{\ell(\mu)} \mu_i + \sum_{j=1}^{\ell(\nu)} \nu_j =\beta \cdot \widetilde{\mathfrak{C}}\,. \end{equation}
Let 
\begin{equation}
\label{eq: m}
m_{\beta,(\mu,\nu)} = H \cdot \beta - 1 + \ell(\nu)    
\end{equation}
where $H$ is the pull-back in $\widetilde{S}_n$ of the class of a line in $\PP^2$, and denote by 
\[ \overline{M}_{g,m_{\beta,(\mu,\nu)}}(\widetilde{S}_n/\widetilde{\mathfrak{C}}, \beta,\mu,\nu) \,, \]
the moduli space of genus $g$ stable maps to $\widetilde{S}_n$ relative to $\widetilde{\mathfrak{C}}$, having
$m_{\beta,(\mu,\nu)}$ marked point with contact order zero along  
$\widetilde{\mathfrak{C}}$, $\ell(\mu)$ marked points with contact orders $(\mu_j)_{1\leq j \leq \ell(\mu)}$ along $\widetilde{\mathfrak{C}}$, and 
$\ell(\nu)$ marked points with contact orders $(\nu_j)_{1\leq j \leq \ell(\nu)}$ along $\widetilde{\mathfrak{C}}$.
We will impose point constraints on the $m$ points with contact order zero and the $\ell(\mu)$ points with contact orders $(\mu_j)_{1\leq j \leq \ell(\mu)}$.
For this, let 
\begin{align*}
ev_i^{\widetilde{S}_n} : \overline{M}_{g,m_{\beta,(\mu,\nu)}}(\widetilde{S}_n/\widetilde{\mathfrak{C}}, \beta,\mu,\nu) & \longrightarrow \widetilde{S}_n, \,\ \mathrm{for} \,1 \leq i \leq m\,,
\end{align*}
and 
\begin{align*}
ev_j^{\widetilde{\mathfrak{C}}} : \overline{M}_{g,m_{\beta,(\mu,\nu)}}(\widetilde{S}_n/\widetilde{\mathfrak{C}}, \beta,\mu,\nu) & \longrightarrow \widetilde{\mathfrak{C}}, \,\ \mathrm{for} \,1 \leq j \leq \ell(\mu)\,,
\end{align*}
be the natural evaluation maps. Denote by  $\mathrm{pt}_{\widetilde{S}_n} \in H^4(\widetilde{S}_n, \ZZ)$ 
(resp.\ 
$\mathrm{pt}_{\widetilde{\mathfrak{C}}} \in H^2(\widetilde{\mathfrak{C}},\ZZ)$)
be the Poincare dual of the class of a point in $\widetilde{S}_n$ (resp.\ $\widetilde{\mathfrak{C}}$). 
We consider the genus $g$ relative Gromov--Witten invariants of $(\widetilde{S}_n,\widetilde{\mathfrak{C}})$, defined by 
\begin{equation}
\label{def: gw}
GW_{g,\beta,(\mu,\nu)}^{\widetilde{S}_n/\widetilde{\mathfrak{C}}} := 
\frac{1}{|\mathrm{Aut}(\mu)|}
\frac{1}{|\mathrm{Aut}(\nu)|}
\int_{[\overline{M}_{g,m_{\beta,(\mu,\nu)}}(\widetilde{S}_n/\widetilde{\mathfrak{C}}, \beta,\mu,\nu)]^{\mathrm{vir}}} (-1)^g \lambda_g  \prod_{i=1}^{m_{\beta,(\mu,\nu)}} (ev_i^{\widetilde{S}_n})^*(\mathrm{pt}_{\widetilde{S}_n}) 
 \prod_{j=1}^{\ell(\mu)} (ev_j^{\widetilde{\mathfrak{C}}})^*(\mathrm{pt}_{\widetilde{\mathfrak{C}}}) \,,
\end{equation}
where $|\mathrm{Aut}(\mu)|$
(resp.\ $|\mathrm{Aut}(\nu)|$) is the order of the group of automorphisms of the ordered partition $\mu$ (resp.\ $\nu$).
When $g=0$, the Gromov--Witten invariants \eqref{def: gw} agree with the ones defined in \cite[\S 2.2]{BrugFloorconic}. 

By the following dimension calculation,
the degree of the integrand in \eqref{def: gw} equals the virtual dimension of the moduli space $\overline{M}_{g,m}(\widetilde{S}_n/\widetilde{\mathfrak{C}}, \beta,\mu,\nu)$.
Denoting by $E_1, \cdots, E_n$ the classes of the exceptional curves of the blow-up $\widetilde{S}_n \rightarrow \PP^2$, we have \[ c_1(\widetilde{S}_n)=3H -\sum_{i=1}^n E_i\,\,\, \text{and}\,\,\, \widetilde{\mathfrak{C}}=2H - \sum_{i=1}^n E_i\,.\]
Therefore, using \eqref{eq_mu_nu}, we obtain
\begin{equation} \label{eq_c1} 
c_1(\widetilde{S}_n)\cdot \beta -\sum_{i=1}^{\ell(\mu)} \mu_i - \sum_{j=1}^{\ell(\nu)}\nu_j
= c_1(\widetilde{S}_n)\cdot \beta -\widetilde{\mathfrak{C}}\cdot \beta = H \cdot \beta \,.\end{equation}

\begin{lemma}
The virtual dimension of the moduli space $\overline{M}_{g,m_{\beta,(\mu,\nu)}}(\widetilde{S}_n/\widetilde{\mathfrak{C}}, \beta,\mu,\nu)$ satisfies
\[ \mathrm{virdim}(\overline{M}_{g,m_{\beta,(\mu,\nu)}}(\widetilde{S}_n/\widetilde{\mathfrak{C}}, \beta,\mu,\nu))=g+2m_{\beta,(\mu,\nu)}+\ell(\mu)\,. \]
\end{lemma}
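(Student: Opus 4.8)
The plan is to reduce the statement to the standard virtual dimension formula for moduli spaces of relative (equivalently, log) stable maps to a surface, and then to simplify using the numerical identities \eqref{eq_c1} and \eqref{eq: m} already recorded above.

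First I would recall that for a smooth projective surface $S$ relative to a smooth divisor $D$, the moduli space of genus $g$ relative stable maps of class $\beta$ carries a virtual class whose dimension is governed by the log tangent bundle $T_S(-\log D)$, with $c_1(T_S(-\log D)) = c_1(S) - [D]$. Writing $N$ for the \emph{total} number of marked points (interior and relative together), the virtual dimension is
\[ \mathrm{virdim} = c_1(T_S(-\log D))\cdot\beta + (\dim S - 3)(1-g) + N = c_1(S)\cdot\beta - D\cdot\beta + (g-1) + N\,. \]
The essential point, which makes the bookkeeping work, is that the individual contact orders do not enter this formula beyond being fixed combinatorial data: the dimension depends only on $g$, on the numerical pairings $c_1(S)\cdot\beta$ and $D\cdot\beta$ (with $D\cdot\beta$ fixed by \eqref{eq_mu_nu}), and on $N$. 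As a sanity check I would specialize against the absolute formula $\mathrm{virdim}\,\overline{M}_{g,m_\beta}(S,\beta) = c_1(S)\cdot\beta + g - 1 + m_\beta$ recalled earlier, by taking $D\cdot\beta$ transverse order-one relative points: the $+D\cdot\beta$ relative marked points exactly cancel the $-D\cdot\beta$ coming from the log term.

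Next I would count marked points for the case at hand: there are $m_{\beta,(\mu,\nu)}$ interior marked points of contact order zero, $\ell(\mu)$ relative marked points carrying the orders $\mu_j$, and $\ell(\nu)$ relative marked points carrying the orders $\nu_j$, so $N = m_{\beta,(\mu,\nu)} + \ell(\mu) + \ell(\nu)$. Substituting, and using \eqref{eq_c1} in the form $c_1(\widetilde{S}_n)\cdot\beta - \widetilde{\mathfrak{C}}\cdot\beta = H\cdot\beta$, gives
\[ \mathrm{virdim} = H\cdot\beta + (g-1) + m_{\beta,(\mu,\nu)} + \ell(\mu) + \ell(\nu)\,. \]
Finally I would eliminate $H\cdot\beta$ using the definition \eqref{eq: m}, that is $H\cdot\beta = m_{\beta,(\mu,\nu)} + 1 - \ell(\nu)$. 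The two occurrences of $\ell(\nu)$ cancel, as do the $+1$ and the $-1$, leaving $\mathrm{virdim} = g + 2m_{\beta,(\mu,\nu)} + \ell(\mu)$, which is the claim.

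There is no genuine obstacle here: the argument is a routine dimension count once the correct relative/log dimension formula is invoked. The only points requiring care are the marked-point bookkeeping—in particular that the order-$\nu_j$ relative points still contribute to $N$ even though \eqref{def: gw} imposes no point constraint on them—and the observation that the definition \eqref{eq: m} of $m_{\beta,(\mu,\nu)}$ has been arranged precisely so that the $\ell(\nu)$ contribution of these points cancels, rendering the virtual dimension independent of $\nu$ beyond the constraint \eqref{eq_mu_nu}. The computation also furnishes the consistency check promised just before the lemma: the complex degree of the integrand in \eqref{def: gw}—namely $g$ from $\lambda_g$, $2m_{\beta,(\mu,\nu)}$ from the $\mathrm{pt}_{\widetilde{S}_n}$ insertions, and $\ell(\mu)$ from the $\mathrm{pt}_{\widetilde{\mathfrak{C}}}$ insertions—matches this virtual dimension exactly.
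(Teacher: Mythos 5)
Your proof is correct and follows essentially the same route as the paper: the paper quotes Li's relative virtual dimension formula with the explicit terms $\sum_i(1-\mu_i)+\sum_j(1-\nu_j)$, which is precisely your log-tangent-bundle formula $c_1(\widetilde{S}_n)\cdot\beta - \widetilde{\mathfrak{C}}\cdot\beta + (g-1) + N$ once one uses \eqref{eq_mu_nu}, and then simplifies exactly as you do via \eqref{eq_c1} and \eqref{eq: m}. The repackaging of the contact orders into $c_1(T_{\widetilde{S}_n}(-\log \widetilde{\mathfrak{C}}))$ is a cosmetic difference only.
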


\begin{proof}
By \cite{li2002degeneration}, the virtual dimension of the moduli space of relative stable maps is given by
\[ \mathrm{virdim}(\overline{M}_{g,m}(\widetilde{S}_n/\widetilde{\mathfrak{C}}, \beta,\mu,\nu))=g-1+c_1(\widetilde{S}_n) \cdot \beta +\sum_{i=1}^{\ell(\mu)} (1-\mu_i)+ 
\sum_{j=1}^{\ell(\nu)}(1-\nu_j)+
m_{\beta,(\mu,\nu)}\,. \]
By \eqref{eq_c1}, this can be rewritten as
\[ \mathrm{virdim}(\overline{M}_{g,k+m}(\widetilde{S}_n/\widetilde{\mathfrak{C}}, \beta,\alpha))=g-1+H \cdot \beta +\ell(\mu)+\ell(\nu)+m_{\beta,(\mu,\nu)}\,,\]
which is equal to $g+2m_{\beta,(\mu,\nu)}+\ell(\mu)$ since $H \cdot \beta-1+\ell(\nu)=m_{\beta,(\mu,\nu)}$.
\end{proof}

We now define a relative version of the BPS invariants.

\begin{definition} \label{def_relative_bps_inv}
Let $\beta \in H_2(\widetilde{S}_n,\ZZ)$ such that $H \cdot \beta \geq 1$, and $\mu$, $\nu$ as in \eqref{eq_mu_nu}. 
For every $g \geq 0$, we define the \emph{relative BPS invariants} $BPS_{g,\beta,(\mu,\nu)}^{\widetilde{S}_n/\widetilde{\mathfrak{C}}} \in \QQ$ by the formula
\begin{align} \label{eq_relative_bps_inv}
&\sum_{g \geq 0} GW_{g,\beta,(\mu,\nu)}^{\widetilde{S}_n/\widetilde{\mathfrak{C}}} u^{2g-2+H \cdot \beta + \ell(\mu)+\ell(\nu)} \\
&= \left(\prod_{j=1}^{\ell(\mu)} \frac{1}{\mu_j} 2 \sin\left(\frac{\mu_j u}{2}\right)\right)
\left(\prod_{j=1}^{\ell(\nu)} \frac{1}{\nu_j} 2 \sin\left(\frac{\nu_j u}{2}\right)\right)
\sum_{g \geq 0} BPS_{g,\beta,(\mu,\nu)}^{\widetilde{S}_n/\widetilde{\mathfrak{C}}}  
\left( 2 \sin \left( \frac{u}{2} \right)\right)^{2g-2+H \cdot \beta} \,. \nonumber\end{align}
\end{definition}

\begin{definition} \label{def_relative_BPS}
Let $\beta \in H_2(\widetilde{S}_n,\ZZ)$ such that $H \cdot \beta \geq 1$, and $\mu$, $\nu$ as in \eqref{eq_mu_nu}. 
We define the \emph{relative BPS polynomial} $BPS_{\beta,(\mu,\nu)}^{\widetilde{S}_n/\widetilde{\mathfrak{C}}}(q)$
by 
\begin{align} \label{eq_relative_bps_polyn}
BPS_{\beta,(\mu,\nu)}^{\widetilde{S}_n/\widetilde{\mathfrak{C}}}(q)&:= 
\sum_{g \geq 0} BPS_{g,\beta,(\mu,\nu)}^{\widetilde{S}_n/\widetilde{\mathfrak{C}}}  
\left( 2 \sin \left( \frac{u}{2} \right)\right)^{2g}
= \sum_{g \geq 0} BPS_{g,\beta,(\mu,\nu)}^{\widetilde{S}_n/\widetilde{\mathfrak{C}}}  
(-1)^g (q^{\frac{1}{2}}-q^{-\frac{1}{2}})^{2g}\\
&=\sum_{g \geq 0} BPS_{g,\beta,(\mu,\nu)}^{\widetilde{S}_n/\widetilde{\mathfrak{C}}}  
(-1)^g (q-2+q^{-1})^g \,, \nonumber
\end{align}
where $q=e^{iu}$.
\end{definition}

Despite the name given in Definition \ref{def_relative_BPS}, it is not clear at this point that $BPS_{\beta,(\mu,\nu)}^{\widetilde{S}_n/\widetilde{\mathfrak{C}}}(q)$ is a Laurent polynomial in $q$. Nevertheless, we will show in Corollary \ref{cor_bps_floor} that $BPS_{\beta,(\mu,\nu)}^{\widetilde{S}_n/\widetilde{\mathfrak{C}}}(q)$ is indeed a Laurent polynomial in $q$ with integer coefficients.

\begin{remark}
    Arguing as in the proof of Lemma \ref{lem_X_S}, the relative Gromov--Witten invariants $GW_{g,\beta,(\mu,\nu)}^{\widetilde{S}_n/\widetilde{\mathfrak{C}}}$ of the surface $(\widetilde{S}_n,\widetilde{\mathfrak{C}})$ with insertion of $(-1)^g \lambda_g$ can be viewed as relative Gromov--Witten invariants of the 3-fold $(\widetilde{S}_n \times \PP^1, \widetilde{\mathfrak{C}} \times \PP^1)$.
    The structure of the formula \eqref{eq_relative_bps_inv} defining the relative BPS invariants, including the factors $\frac{1}{\mu_j} 2 \sin\left(\frac{\mu_j u}{2}\right)$ and $\frac{1}{\nu_j} 2 \sin\left(\frac{\nu_j u}{2}\right)$, agrees with the general BPS integrality prediction in the string theory literature for open Gromov--Witten invariants of 3-folds -- see for example \cite[Eq. (2.10)]{openbps}. Other examples of this higher genus relative/open integrality are discussed in 
    \cite[\S 1.5.1]{bousseau_tak}, \cite[\S 8]{bousseau_looijenga}, \cite{open_bps_toric} and \cite{bousseau2023all, guo2025poincar}.
\end{remark}

\subsection{Floor diagrams}
\label{sec_floor}

We begin by reviewing the definition of marked floor diagrams for $(\widetilde{S}_n, \widetilde{\mathfrak{C}})$, following \cite[\S3.1]{BrugFloorconic}, and discussing the enumeration of such diagrams with refined multiplicities. We then introduce the degenerations that will be utilized in the subsequent section, where we prove that refined counts of floor diagrams correspond to the higher
genus Gromov--Witten invariants introduced in \S\ref{sec:relative_bps}.

\subsubsection{A degeneration of $\widetilde{S}_n$}
\label{subsub: a degeneration}
Recall that we denote by $\widetilde{S}_n$ the blow-up of $\PP^2$ at $n$ points on a smooth conic $\mathfrak{C}$.
Since the normal bundle to $\mathfrak{C} \simeq \PP^1$ in $\PP^2$ is isomorphic to $\cO_{\PP^1}(4)$, we obtain by $(m+1)$ successive applications of the degeneration to the normal cone of $\mathfrak{C}$ a degeneration of $\PP^2$: 
\[ \epsilon: \mathcal{F} \to \mathbb{C} \,,\]
with central fiber 
\[\epsilon^{-1}(0) = \PP^2 \cup \mathbb{F}_4^{(m+1)} \cup \mathbb{F}_4^{(m)}  \cup  \ldots \cup \mathbb{F}_4^{(2)} \cup \mathbb{F}_4^{(1)} \,, \]
given by a union of $\PP^2$ with $m+1$ copies of Hirzebruch surfaces $\mathbb{F}_4^{(i)} \simeq \PP(\cO_{\PP^1} \oplus \cO_{\PP^1}(4))$, for $1\leq i \leq m+1$, glued pairwise along copies of $\mathfrak{C}$. 
By blowing-up $n$ sections of $\epsilon$ defining degenerations of the $n$ points in $\mathfrak{C}$ that we blow up to obtain $\widetilde{S}_n$, similarly as in \cite[\S5.3]{GPS}, we obtain a degeneration
\begin{equation}
    \label{eq:deg to normal}
    \widetilde{\epsilon}: \widetilde{\mathcal{F}} \to \mathbb{C} \,,
\end{equation}
with general fiber $\widetilde{S}_n$, and central fiber 
\[ \widetilde{\epsilon}^{-1}(0) = \PP^2 \cup\mathbb{F}_4^{(m)}  \cup  \ldots \cup \mathbb{F}_4^{(2)} \cup \mathbb{F}_4^{(1)}  \cup  \mathrm{Bl}_{n}\mathbb{F}_4 \,, \]
the union of $\PP^2$ with $m$ copies of $ \mathbb{F}_4^{(i)} $ and a final irreducible component given by the blow up of $ \mathbb{F}_4 $ along the limits of the $n$ points on $\mathfrak{C}$, as illustrated in Figure \ref{Fig6}.

In \S \ref{subsec_relative_floor}, we will calculate 
the Gromow--Witten invariants $GW_{g,\beta}^{\widetilde{S}_n/\widetilde{\mathfrak{C}}}$, defined as in \eqref{def: gw} with $m_{\beta,(\mu,\nu)}$ point insertions, using a degeneration of $\widetilde{S}_n$ as in \eqref{eq:deg to normal} with $m=m_{\beta,(\mu,\nu)}$, such that the $i$'th point insertion degenerates into the $i$'th copy of the Hirzebruch surface $\mathbb{F}_4$ in $\widetilde{\epsilon}^{-1}(0)$, as illustrated in Figure \ref{Fig6}.
We first describe the combinatorics of the curves in $\widetilde{\epsilon}^{-1}(0)$, obtained as degenerations of curves in $\widetilde{S}_n$ contributing to such counts, in terms of marked floor diagrams in the following section.

\subsubsection{Marked floor diagrams}
\label{subsec: marked floor}
We briefly review below the definition of floor diagrams, and marked floor diagrams following \cite[\S3.1]{BrugFloorconic}.

A \emph{weighted oriented graph} $\Gamma$ is a connected graph with finitely many vertices, edges adjacent to two vertices, and legs adjacent to a single vertex, with an orientation and a choice of positive integers $w_e$ and $w_l$, for each edge $e$ and each leg $l$, called the weight of $e$ and $l$ respectively. We use the notations $V(\Gamma)$, $E(\Gamma)$, and $L(\Gamma)$ to denote the sets of vertices, edges and legs of a graph $\Gamma$. Note that the set 
\[ V(\Gamma) \cup E(\Gamma) \cup L(\Gamma) \]
admits a natural \emph{partial ordering} of its elements, generated by the relations $e \leq v$ if $e \in E(\Gamma)$ is an edge (resp. $l \leq v$ if $l \in L(\Gamma)$ is a leg) adjacent to $v$ and oriented towards $v$, and $e \geq v$ if $e$ is an edge (resp. $l \geq v$ if $l \in L(\Gamma)$ is a leg) adjacent to $v$ and oriented away from $v$.

Furthermore, for each vertex $v \in V(\Gamma)$, we denote by $\mathrm{div}(v)$ the \emph{divergence of $v$}, defined as the sum of all the weights on all edges and legs oriented towards $v$ minus the sum of all the weights on all edges and legs oriented outwards from $v$. 

\begin{definition}
\label{Def:floor diagram}
A \emph{floor diagram} of genus $g_0 \in \ZZ_{\geq 0}$ and degree $d \in \ZZ_{\geq 1}$ is  a weighted oriented graph $\Gamma$ satisfying the following conditions:
\begin{itemize}
    \item[i)] The oriented graph $\Gamma$ is acyclic, that is, it does not contain any oriented cycles,
    \item[ii)] The first Betti number $\Gamma$ equals $g_0$,
\item[iii)] The weights on the legs of $\Gamma$ satisfy the equation
\[  \sum_{l \in L(\Gamma)} w_l = 2d \,\]
\item[iv)] All legs are oriented towards the vertex they are adjacent to,
\item[v)] For each vertex $v \in V(\Gamma)$, either $\mathrm{div}(v)=2$ or $\mathrm{div}(v)=4$. Moreover, if $\mathrm{div}(v)=2$, then all edges and legs adjacent to $v$ are oriented towards $v$.
\end{itemize}
\end{definition}
Throughout this paper all floor diagrams we work with will be of genus $g_0=0$.
We will use the notation 
\[ V_2(\Gamma)=\{v \in V(\Gamma)\,|\, \mathrm{div}(v)=2\}\,\,\, \text{and} \,\,\,V_4(\Gamma)=\{v \in V(\Gamma)\,|\, \mathrm{div}(v)=4\},\]
so that $V(\Gamma)=V_2(\Gamma) \amalg V_4(\Gamma)$.

\begin{remark}
\label{rem: div 2 vertices}
When drawing floor diagrams in the following sections, following \cite{BrugFloorconic} we plot vertices $v \in V_2(\Gamma)$ in gray disks and vertices $v \in V_2(\Gamma)$ in white disks. 
If $v \in V_2(\Gamma)$, then since by item v) of Definition \ref{Def:floor diagram} all edges of $v$ are oriented towards $v$, locally around the vertex of $v$, we can have either a single edge or leg with weight $2$, or two edges or legs each with weight $1$ -- see Figure \ref{Fig1}. We denote by $V_2^{(2)}(\Gamma)$ the set of $v\in V_2(\Gamma)$ adjacent to a single edge of weight 2, as on the left of Figure \ref{Fig1}, and by $V_2^{(1,1)}(\Gamma)$ the set of $v\in V_2(\Gamma)$ adjacent to two edges of weight one, as on the right of Figure \ref{Fig1}, so that 
\[ V_2(\Gamma)=V_2^{(2)}(\Gamma) \amalg V_2^{(1,1)}(\Gamma) \,.\]
By convention, when plotting floor diagrams, we label each edge and leg with its weight if that weight is strictly greater than $1$. An unlabeled edge or leg implies a weight of $1$.
\end{remark}


\begin{figure}[htb]
\begin{center}
\includegraphics[height=0.8in,width=2in,angle=0]{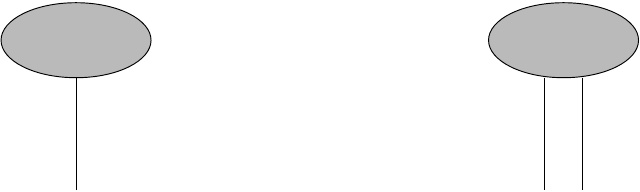}
\caption{Two floor diagrams of degree $2$}
\label{Fig1}
\end{center}
\end{figure}

We will later make use of the following elementary result on floor diagrams in the proof of Theorem \ref{thm_floor_gw}:

\begin{lemma} \label{lem_floor}
A floor diagram $\Gamma$ of degree $d$ satisfies
$|V_2(\Gamma)|+2|V_4(\Gamma)|=d$.
\end{lemma}

\begin{proof}
We evaluate $\sum_{v\in V(\Gamma)} \mathrm{div}(v)$ in two different ways. Since $V(\Gamma)=V_2(\Gamma)\amalg V_4(\Gamma)$, we obtain $\sum_{v\in V(\Gamma)} \mathrm{div}(v)=2|V_2(\Gamma)|+4|V_4(\Gamma)|$. On the other hand, expressing $\mathrm{div}(v)$ as a difference of weights, all terms cancel except the contributions of the legs. 
This yields $\sum_{v\in V(\Gamma)} \mathrm{div}(v)= \sum_{l \in L(\Gamma)} w_l$. 
By Definition \ref{Def:floor diagram}iii), this sum equates $2d$, competing the proof.
\end{proof}

Recall that we denote by $\widetilde{S}_n$ the blow-up of $\PP^2$ along $n$ distinct points on a conic $\mathfrak{C}$, and by $\widetilde{\mathfrak{C}}$ the strict transform of $\mathfrak{C}$. 
We let $E_1, \ldots E_n$ denote the exceptional curves, and $H$ the pull-back in $\widetilde{S}_n$ of the class of a line in $\PP^2$.

\begin{definition}
\label{def: marking on floor}
Let $\beta \in H_2(\widetilde{S}_n,\ZZ)$ such that $H \cdot \beta \geq 1$, and $\mu$, $\nu$ as in \eqref{eq_mu_nu}.
A \emph{marking of class $\beta$ and type $(\mu,\nu)$} of a floor diagram $\Gamma$ of genus $g_0$ and degree $d=H \cdot \beta$, is given by the following data:
\begin{itemize}
    \item[i)] A decomposition of the set of legs
    \[ L(\Gamma) = L_\mu \amalg L_\nu \amalg A_1 \amalg \ldots \amalg A_n \,,  \]
    where $L_\mu$ (resp.\ $L_\nu$) is a set of $\ell(\mu)$ (resp.\ $\ell(\nu)$) legs with weights $(\mu_i)_{1\leq i \leq \ell(\mu)}$
    (resp.\ $(\nu_i)_{1\leq i \leq \ell(\nu)}$), and for any $1\leq j \leq n$,  $A_j$ is a set of $\beta \cdot E_j$ legs of weight $1$, which are adjacent to distinct vertices of $\Gamma$.
 \item[ii)] A bijection 
 \begin{equation}
   \label{eq:phi}
   \phi:  \{ 1,\ldots m_{\beta,(\mu,\nu)} \} \xrightarrow{\sim }  V_4(\Gamma) \cup E(\Gamma) \cup L_\nu,
 \end{equation}
 where $m_{\beta,(\mu,\nu)}:=H \cdot \beta -1 +\ell(\nu)$, which is \emph{increasing}, that is,
 \[  \phi(i) > \phi(j) \implies i>j \, \]
with respect to the partial ordering on $V_4(\Gamma) \cup E(\Gamma) \cup L_\nu$, as described at the beginning of \S\ref{subsec: marked floor}.
\end{itemize}
A \emph{marked floor diagram} is a floor diagram equipped with a marking.
\end{definition}

To emphasize the decomposition of the legs in Definition \ref{def: marking on floor}, when drawing a floor diagram $\Gamma$ with $\mu=\emptyset$, we plot the legs in $L_{\nu}$ with black, and the ones in $A_1 \amalg \ldots \amalg A_n$ in red. 
The images of the numbers ${1,\ldots, m_{\beta,(\mu,\nu)}}$ under the bijection $\phi$ in Definition \ref{def: marking on floor} are displayed in red, to distinguish them from the weights on the edges. 

\begin{definition}
\label{def: marked floor isomorphic}
    Two marked floor diagram are \emph{isomorphic} if there exists an isomorphism between the underlying weighted oriented graphs, which preserves the decompositions of the sets of legs and commutes with the increasing bijections.
\end{definition}

\begin{example}
\label{Ex: d=2 simple}
In the floor diagrams illustrated in Figure \ref{Fig2}, there is a unique marking 
up to isomorphism. On both figures, there is 
up to isomorphism  a unique decompposition of the legs, 
\[ L(\Gamma) = L_{(1,1)} \amalg A_1 \amalg \ldots A_6 \,,  \]
where each of the $6$ red legs corresponds to an element in $A_i$, for $1\leq i \leq 6$. We have $m_{\beta,(\mu,\nu)}=4-1+1+1=5$, and the increasing bijection $\phi: \{ 1,\ldots, 5 \} \to V_4(\Gamma) \cup E(\Gamma) \cup L_\nu$ is also uniquely determined, up to isomorphism.

\begin{figure}[htb]
\begin{center}
\includegraphics[height=1.2in,width=4in,angle=0]{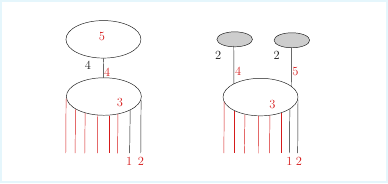}
\caption{Two floor diagrams of degree $4$ of class $\beta = 4H - \sum_{i=1}^6 E_i \in H_2(\widetilde{S}_6,\ZZ)$ and type $(\emptyset, (1,1))$.}
\label{Fig2}
\end{center}
\end{figure}

\end{example}
We provide examples of floor diagrams on which there are several choices of markings in the following section.

\subsubsection{Curves in the degeneration of $\widetilde{S}_n$ and floor diagrams}
\label{subsec: curves and floors}
In this section we provide examples of marked floor diagrams and briefly explain how they encode topological information about curves in the
central fiber 
\[ \widetilde{\epsilon}^{-1}(0) = \PP^2 \cup \mathbb{F}_4^{(m_{\beta,(\mu,\nu)})} \cup \ldots \mathbb{F}_4^{(1)} \cup \mathrm{Bl}_n(\mathbb{F}_4) \, \]
of the
degeneration $\widetilde{\epsilon}$ 
of $\widetilde{S}_n$ described in \S\ref{subsub: a degeneration}. We refer to \cite{BrugFloorconic} for details.

\begin{figure}[htb]
\begin{center}
\includegraphics[height=1.2in,width=5in,angle=0]{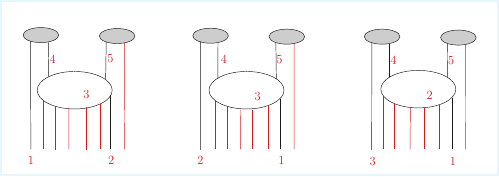}
\caption{Different markings on a floor diagram}
\label{Fig3}
\end{center}
\end{figure}


There exists a natural correspondence between the topology of irreducible components of curves in the central fiber, and vertices, edges, and legs of floor diagrams. 
White vertices marked by $i$, for $1\leq i \leq m_{\beta,(\mu,\nu)}$, correspond to curves in $\mathbb{F}_4^{(i)}$ of class $C_{-4} + bF$ with $b\in \ZZ_{\geq 0}$, where $F$ is the fiber class of $\mathbb{F}_4$ and $C_{-4}$ is the class of the $(-4)$-curve. 
Grey vertices represent lines in $\PP^2$. Bounded edges of weight $w$ correspond to chains of $\PP^1$ fibers of class $w F$. If a bounded edge which is marked by $k$ connects vertices marked by $i$ and $j$, for $i\leq k \leq j$, then the corresponding curve is a chain of $j-i$ copies of $\PP^1$'s.

\begin{figure}[htb]
\begin{center}
\includegraphics[height=1.5in,width=4.0in,angle=0]{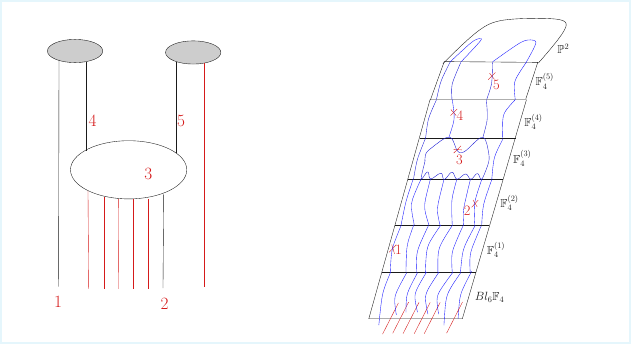}
\caption{A floor diagram with $m_{\beta,(\mu,\nu)}=5$, and the corresponding components of a curve in the central fiber $\widetilde{\epsilon}^{-1}(0)$. }
\label{Fig6}
\end{center}
\end{figure}


\begin{example}
The floor diagram illustrated in Figure \ref{Fig3} admits $60$ different markings, since there are $6$ different choices, up to permutation, of the decomposition of the red legs, each corresponding to an element of $A_1, \ldots, A_6$. For each decomposition, there are $10$ possible choices of markings $\phi: \{ 1,\ldots , 5 \} \to V_4(\Gamma) \cup E(\Gamma) \cup L_\nu$: in the first floor diagram in Figure \ref{Fig3}, swapping $\phi(4)$ and $\phi(5)$ provides us with two different markings, similarly in the middle diagram we have two different markings obtained by swapping $\phi(4)$ and $\phi(5)$. Finally, in the right hand diagram, permuting $\phi(3),\phi(4)$, and $\phi(5)$ 
produces $6$ different markings. 
In Figure \ref{Fig6}, we illustrate the irreducible components of a curve in the central fiber corresponding to one of these marked floor diagrams. 
\end{example}

\subsection{Relative BPS polynomials from refined counts of floor diagrams}
\label{sec_relative_floor}

In this section, we prove in Theorem \ref{thm_floor_gw} and Corollary \ref{cor_bps_floor} 
that the higher genus relative Gromov--Witten invariants and relative BPS polynomials of $(\widetilde{S}_n, \widetilde{\mathfrak{C}})$
can be computed using refined counts of floor diagrams.
To do this, we first calculate the relative BPS polynomials
explicitly in two simple cases in \S \ref{sec_lines_2} and \S\ref{sec_line_tangent}. These results are used in the proof of the general case in \S\ref{subsec_relative_floor}.

\subsubsection{Lines intersecting the conic in two points}
\label{sec_lines_2}
In this section, we compute in a particular case the relative Gromov--Witten invariants $GW_{g,\beta, (\mu,\nu)}^{\widetilde{S}_n/\widetilde{\mathfrak{C}}}$ defined in \eqref{def: gw}. We assume that $n=0$, that is $\widetilde{S}_n=\PP^2$ and $\widetilde{\mathfrak{C}}=\mathfrak{C}$. Moreover, we assume that $\beta=H$, $\mu=(1,1)$ and $\nu=\emptyset$, that is, we are considering stable maps to $
\PP^2$ whose image is the line passing through two given distinct points on the conic $\mathfrak{C}$. In this case, 
there are no marked point with contact order zero since 
$m_{\beta,(\mu,\nu)}=H \cdot \beta -1+\ell(\nu)=1-1+0=0$.
The corresponding relative Gromov--Witten invariants $GW_{g,H, ((1,1),\emptyset)}^{\PP^2/\mathfrak{C}}$ are calculated by the result below.

\begin{lemma} \label{lem_gw_11}
We have 
\[ \sum_{g \geq 0} GW_{g,H,((1,1),\emptyset)}^{\PP^2/\mathfrak{C}} u^{2g} 
=u^{-1 }2 \sin \left( \frac{u}{2}\right) =u^{-1} (-i)(q^{\frac{1}{2}}-q^{-\frac{1}{2}})\,,\]
where $q=e^{iu}$. In particular, we have $BPS_{H,((1,1),\emptyset)}^{\PP^2/\mathfrak{C}}(q)=1$.
\end{lemma}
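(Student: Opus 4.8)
The plan is to exploit that $\beta=H$ is the class of a line, so that the enumerative problem localizes to a single rigid rational curve, and then to evaluate the higher-genus contributions by a local-curve computation of the same type as in the proof of Lemma~\ref{lem_ex}.

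First I would fix two general points $p_1,p_2\in\mathfrak{C}$. Since the insertions $(ev_1^{\mathfrak{C}})^\ast(\mathrm{pt}_{\mathfrak{C}})$ and $(ev_2^{\mathfrak{C}})^\ast(\mathrm{pt}_{\mathfrak{C}})$ force the two order-one contact points to lie at $p_1$ and $p_2$, every stable map contributing to $GW_{g,H,((1,1),\emptyset)}^{\PP^2/\mathfrak{C}}$ has image the unique line $L=\overline{p_1p_2}$, with one component mapping isomorphically onto $L$ and all other components contracted. As $L$ meets $\mathfrak{C}$ transversally exactly at $p_1,p_2$, the relative marked points automatically carry contact order one, the genus-zero moduli space is a single reduced point, and matching the labelling of the two equal contact points against the factor $1/|\mathrm{Aut}((1,1))|=1/2$ in \eqref{def: gw} yields the leading coefficient $GW_{0,H,((1,1),\emptyset)}^{\PP^2/\mathfrak{C}}=1$.

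Next I would treat the higher-genus terms. Using the normal bundle $N_{L/\PP^2}=\cO_{\PP^1}(1)$ (equivalently, viewing $L\times\{0\}\subset\PP^2\times\PP^1$ as in the Remark following Definition~\ref{def_relative_BPS}), the virtual class splits into the $\cO_{\PP^1}(1)$-contribution and the contribution of the contracted genus-$g$ components; inserting $(-1)^g\lambda_g$ and using that $\lambda_g$ vanishes away from the compact-type locus reduces the series $\sum_{g\ge0}GW_{g,H,((1,1),\emptyset)}^{\PP^2/\mathfrak{C}}\,u^{2g}$ to a generating series of $\lambda_g$ Hodge integrals over the spaces $\overline{M}_{g,1}$. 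I would evaluate this series by a local-curve computation analogous to Lemma~\ref{lem_ex}, via \cite[Lemmas~6.2--6.3]{BP_local_curves} together with the standard $\lambda_g$ Hodge-integral evaluation, obtaining the reciprocal of the Faber--Pandharipande series $\sum_{g}u^{2g}\int_{\overline{M}_{g,1}}\psi_1^{2g-2}\lambda_g=(u/2)/\sin(u/2)$, that is $\sum_{g\ge0}GW_{g,H,((1,1),\emptyset)}^{\PP^2/\mathfrak{C}}\,u^{2g}=\sin(u/2)/(u/2)=u^{-1}\,2\sin(u/2)$. Writing $2\sin(u/2)=-i\,(q^{1/2}-q^{-1/2})$ with $q=e^{iu}$ gives the displayed identity, and feeding it into \eqref{eq_relative_bps_inv} with $\mu=(1,1)$, $\nu=\emptyset$, $H\cdot\beta=1$ (so the left side becomes $2\sin(u/2)$ and the right side becomes $\sum_g BPS_g\,(2\sin(u/2))^{2g+1}$) forces $BPS_0=1$ and $BPS_g=0$ for $g\ge1$, hence $BPS_{H,((1,1),\emptyset)}^{\PP^2/\mathfrak{C}}(q)=1$.

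The main obstacle is the precise identification and evaluation of the local model in the second step: one must set up the obstruction theory for relative stable maps to $(\PP^2,\mathfrak{C})$ with the twist by $\cO_{\PP^1}(1)$, control the contracted tails that attach over $p_1,p_2$ (where expanded/rubber geometry enters), and match the resulting Hodge-integral series to the reciprocal-sine evaluation. The genus-zero normalization also requires care, since the interplay between the ordering of the two equal contact points and the factor $1/|\mathrm{Aut}((1,1))|$ must be arranged so that the constant term comes out to $1$ rather than $1/2$.
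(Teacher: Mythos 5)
Your route is genuinely different from the paper's: you propose a direct excess-intersection/local-curve analysis at the rigid line $L$, whereas the paper never analyzes $L$ locally at all. The paper (i) identifies the absolute two-pointed $\lambda_g$-invariants $GW_{g,H}^{\PP^2}$ with the log invariants of $\PP^2$ relative to its \emph{toric boundary} (Theorem \ref{thm_absolute_relative_0}), evaluates them as $u^{-1}\,2\sin(u/2)$ via the unique tropical line of multiplicity one and \cite[Theorem 1]{Bou2019}, and then (ii) degenerates $\PP^2$ to the normal cone of $\mathfrak{C}$ with both point insertions pushed into the $\mathbb{F}_4$ bubble, so that Mumford's relation $\lambda_g^2=0$ gives $GW_{g,F}^{\mathbb{F}_4/\mathfrak{C}_{-4}}=\delta_{g,0}$ and the degeneration formula collapses to the termwise identity $GW_{g,H}^{\PP^2}=GW_{g,H,((1,1),\emptyset)}^{\PP^2/\mathfrak{C}}$. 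The entire point of this maneuver is to route around the computation you are attempting.

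The genuine gap is your higher-genus step: the assertion that the contracted-tail analysis ``yields the reciprocal of the Faber--Pandharipande series'' \emph{is} the content of the lemma, and nothing you cite produces it. \cite[Lemmas 6.2--6.3]{BP_local_curves} evaluate caps of the absolute local curves $\cO\oplus\cO$ and $\cO\oplus\cO(-1)$; combined with the standard $\lambda_g$ evaluation they yield series of the type $\frac{u/2}{\sin(u/2)}$ --- this is exactly what happens for the $(-1)$-curve in Lemma \ref{lem_ex} --- not the reciprocal $\frac{\sin(u/2)}{u/2}$ needed here. The inversion is carried entirely by the relative structure at $p_1,p_2$, and it is delicate: for transverse (order-one) contact the relevant log normal bundle is $N_{L/\PP^2}\cong\cO_L(1)$ itself, \emph{not} the twist $N_{L/\PP^2}(-p_1-p_2)\cong\cO_L(-1)$, and the sign of this twist flips the answer. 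Concretely, at genus one the non-expanded stratum is $\overline{M}_{1,1}\times L$ with reduced obstruction bundle $\mathbb{E}^\vee\boxtimes Q$; with $Q=\cO_L(1)$ one gets $\int(-\lambda_1)(-\lambda_1+h)=-\tfrac{1}{24}$, matching $\frac{\sin(u/2)}{u/2}=1-\tfrac{u^2}{24}+\cdots$, while with $Q=\cO_L(-1)$ one gets $+\tfrac{1}{24}$, the genus-one coefficient of $\frac{u/2}{\sin(u/2)}$. On top of identifying the correct bundle, you must show that all strata involving expanded degenerations (tails attached at the contact points, or components falling into the rubber over $\mathfrak{C}$) contribute zero at every genus. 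You defer precisely these two items to your ``main obstacle'' paragraph, but they are the theorem; once they are done you will have reproved the lemma by hand, with none of the cited references doing the work for you.

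A smaller wrinkle: your genus-zero bookkeeping (two labellings of the contact points times $1/|\mathrm{Aut}((1,1))|=1/2$) is not right as stated, since with generic distinct representatives of the two point classes on $\mathfrak{C}$ only one labelling of the ordered relative markings contributes to the integral in \eqref{def: gw}; the constant term $1$ has to be pinned down by whatever convention makes \eqref{def: gw} agree with the enumerative count of \cite{BrugFloorconic}, a normalization issue your argument inherits rather than resolves.
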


\begin{proof}
Let $GW_{g,H}^{\PP^2}$ be the $2$-pointed genus $g$ Gromov--Witten invariant of $\PP^2$ of class $H$, with insertion of $(-1)^g \lambda_g$ and of two point constraints.
By Theorem 3.4, $GW_{g,H}^{\PP^2}$ equals the corresponding log Gromov--Witten invariant of $\PP^2$ endowed with its toric boundary. 
There exists a unique tropical line passing through two distinct point in $\RR^2$, with a single 3-valent vertex of multiplicity $1$. Thus, the Block-G\"ottsche refined count of lines in $\PP^2$ passing by two distinct points is the constant polynomial $1$. Therefore, by \cite[Theorem 1]{Bou2019}, we obtain that 
\begin{equation} \label{eq_gw_11}
\sum_{g \geq 0} GW_{g,H}^{\PP^2} u^{2g-1} = 2 \sin \left( \frac{u}{2}\right) = (-i)(q^{\frac{1}{2}}-q^{-\frac{1}{2}})\,.\end{equation}
On the other hand, we obtain a different expression for $GW_{g,H}^{\PP^2}$ using the degeneration of $\PP^2$ to the normal cone of the conic $\mathfrak{C}$, with central fiber $\PP^2 \cup \mathbb{F}_4$. 
By degenerating the two point insertions to points in $\mathbb{F}_4$, 
the degeneration formula in relative Gromov--Witten theory \cite{li2002degeneration} 
implies that 
\[ \sum_{g \geq 0} GW_{g,H}^{\PP^2} u^{2g-2}=  \left(\sum_{g \geq 0} GW_{g,H, ((1,1),\emptyset)}^{\PP^2/\mathfrak{C}} u^{2g} \right) 
\left( \sum_{g\geq 0} GW_{g,F}^{\mathbb{F}_4/\mathfrak{C}_{-4}} u^{2g-1}\right)^2\,.\]
Here, $GW_{g,F}^{\mathbb{F}_4/\mathfrak{C}_{-4}}$ is the $1$-pointed genus $g$ Gromov--Witten invariant of $\mathbb{F}_4$ relative to $\mathfrak{C}_{-4}$, of class $F$ and with insertion of $(-1)^g \lambda_g$ and of a point constraint, see Figure \ref{Fig4}. 
As in \cite[Lemma 14]{Bou2019}, the Mumford's relation $\lambda_g^2=0$ when $g>0$ implies that $GW_{g,F}^{\mathbb{F}_4/\mathfrak{C}_{-4}}=1$ when $g=0$ and  $GW_{g,F}^{\mathbb{F}_4/\mathfrak{C}_{-4}}=0$ when $g>0$.
Consequently, we obtain
\[ \sum_{g \geq 0} GW_{g,H}^{\PP^2} u^{2g-2}=  \sum_{g \geq 0} GW_{g,H,((1,1),\emptyset)}^{\PP^2/\mathfrak{C}} u^{2g-2} \]
and so Lemma \ref{lem_gw_11} follows from Equation \eqref{eq_gw_11}.
\end{proof}

\begin{figure}[htb]
\begin{center}
\includegraphics[height=1.2in,width=5in,angle=0]{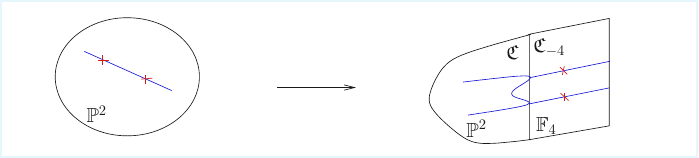}
\caption{The degeneration used in the proof of Lemma \ref{lem_gw_11}.}
\label{Fig4}
\end{center}
\end{figure}


\subsubsection{Lines tangent to the conic}
\label{sec_line_tangent}

In this section, we compute another particular case of the relative Gromov--Witten invariants $GW_{g,\beta,(\mu,\nu)}^{\widetilde{S}_n/\widetilde{\mathfrak{C}}}$ defined in \eqref{def: gw}. 
We assume that $n=0$, that is $\widetilde{S}_n=\PP^2$ and $\widetilde{\mathfrak{C}}=\mathfrak{C}$. Moreover, we assume that $\beta=H$, $\mu=(2)$ and $\nu=\emptyset$, that is, we are considering stable maps to $
\PP^2$ whose image in is the line tangent at a given point to the conic $\mathfrak{C}$. 
In this case, there no marked point with contact order zero since 
$m_{\beta,(\mu,\nu)}=H \cdot \beta -1+\ell(\nu)=1-1+0=0$.
The corresponding relative Gromov--Witten invariants $GW_{g,H, ((2),\emptyset)}^{\PP^2/\mathfrak{C}}$ are calculated by the result below.

\begin{lemma}
\label{lem_gw_2}
We have 
\[ \sum_{g \geq 0} GW_{g,H, ((2),\emptyset)}^{\PP^2/\mathfrak{C}} u^{2g-1} 
=u^{-1}\cos \left( \frac{u}{2}\right) =u^{-1} \frac{1}{2}(q^{\frac{1}{2}}+q^{-\frac{1}{2}})=u^{-1}\frac{[2]_q}{2}\,,\]
where $q=e^{iu}$ and 
\[[2]_q :=
\frac{q^{\frac{2}{2}}-q^{-\frac{2}{2}}}{q^{\frac{1}{2}}-q^{-\frac{1}{2}}}
=q^{\frac{1}{2}}+q^{-\frac{1}{2}}\] 
is the $q$-integer version of $2$ -- see \eqref{eq_q_integer}. In particular, we have $BPS_{H, ((2),\emptyset)}^{\PP^2/\mathfrak{C}}(q)=1$.
\end{lemma}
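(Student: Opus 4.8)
The plan is to compute the generating series $\sum_{g\ge 0} GW_{g,H,((2),\emptyset)}^{\PP^2/\mathfrak{C}}\,u^{2g}$ by degeneration, following the strategy of Lemma \ref{lem_gw_11}; the only new feature is that the two weight-$1$ contacts of the $(1,1)$-profile are replaced by a single weight-$2$ tangential contact along $\mathfrak{C}$. Concretely, I would use the degeneration of $\PP^2$ to the normal cone of $\mathfrak{C}$, with central fibre $\PP^2\cup\mathbb{F}_4$, and specialize the single point insertion -- imposed on the weight-$2$ contact point lying on $\mathfrak{C}$ -- into the Hirzebruch component. The matching condition along the double curve $\mathfrak{C}$ then forces the $\PP^2$-side to carry the genus-$0$ tangent line and the $\mathbb{F}_4$-side to carry a class $2F$ meeting $\mathfrak{C}_{-4}$ with contact order $2$, so that the degeneration formula expresses the desired series as the product of the $\PP^2$-contribution with a weight-$2$ factor on $\mathbb{F}_4$ in which the higher-genus information is concentrated.

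The heart of the argument is the evaluation of this weight-$2$ factor, which is the exact analogue of the weight-$1$ fibre factor $GW_{g,F}^{\mathbb{F}_4/\mathfrak{C}_{-4}}$ appearing in Lemma \ref{lem_gw_11}. Since $\mathbb{F}_4$ is toric, I would compute it via Theorem \ref{thm_absolute_relative_0} together with \cite[Theorem 1]{Bou2019}: the relevant tropical configuration carries a single vertex of multiplicity $2$, whose Block--G\"ottsche refined multiplicity is the quantum integer $[2]_q=q^{1/2}+q^{-1/2}$, while the extra factor $\tfrac12$ is produced by the weight-$2$ normalization $\tfrac{1}{\mu_1}2\sin(\mu_1 u/2)$ of Definition \ref{def_relative_bps_inv} (equivalently, the order-$2$ automorphism of the double contact). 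As in \cite[Lemma 14]{Bou2019}, the Mumford relation $\lambda_g^2=0$ forces all auxiliary components to contribute only in genus $0$, so that the series collapses to the single factor $\tfrac{[2]_q}{2}=\cos(u/2)$; assembling the degeneration formula then yields $\sum_{g\ge 0} GW_{g,H,((2),\emptyset)}^{\PP^2/\mathfrak{C}}\,u^{2g-1}=u^{-1}\cos(u/2)$.

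Finally, the statement $BPS_{H,((2),\emptyset)}^{\PP^2/\mathfrak{C}}(q)=1$ follows formally from Definition \ref{def_relative_bps_inv}. Substituting the series above into \eqref{eq_relative_bps_inv} with $\beta=H$, $\mu=(2)$, $\nu=\emptyset$, the $\mu$-factor $\tfrac12\,2\sin(u)=2\sin(u/2)\cos(u/2)$ cancels the $\cos(u/2)$ on the left against the $(2\sin(u/2))^{-1}$ on the right, leaving $\sum_{g\ge 0}BPS_{g,H,((2),\emptyset)}^{\PP^2/\mathfrak{C}}\,(2\sin(u/2))^{2g}=1$, which is the claim. The main obstacle is the weight-$2$ computation on $\mathbb{F}_4$: one must keep careful track of the factor $\tfrac12$ and of the $u$-powers attached to a higher-weight contact, and confirm that the $\lambda_g^2=0$ vanishing leaves precisely the quantum integer $[2]_q$ and no genuine higher-genus correction.
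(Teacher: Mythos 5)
Your proposal has a genuine gap, and it is structural rather than technical. You propose to compute $GW_{g,H,((2),\emptyset)}^{\PP^2/\mathfrak{C}}$ by degenerating the pair $(\PP^2,\mathfrak{C})$ itself to the normal cone of $\mathfrak{C}$. But in that degeneration the relative divisor limits to $\mathfrak{C}_4\subset\mathbb{F}_4$, and the $\PP^2$-component of the central fiber is again $\PP^2$ relative to the double curve $\mathfrak{C}$, carrying the class $H$ with contact profile $(2)$; a dimension count on the cohomology splitting along the double curve forces the point class onto the $\PP^2$-side contact point. So the degeneration formula reads $A(u)=A(u)\cdot 2B(u)$, where $A(u)$ is exactly the unknown series and $B(u)$ is the weight-$2$ factor on $\mathbb{F}_4$: the unknown cancels, the identity only pins down $B(u)$, and nothing determines $A(u)$. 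Moreover, your claim that the higher-genus information is concentrated in the $\mathbb{F}_4$ factor is exactly backwards: Mumford's relation $\lambda_g^2=0$ (as in \cite{Bou2019}, Lemma 14) applies to the fiber-class $2F$ component and kills its positive-genus contributions, while the class $H$ on the $\PP^2$ side admits no such vanishing and carries all the genus --- indeed the lemma itself shows $GW_{g,H,((2),\emptyset)}^{\PP^2/\mathfrak{C}}\neq 0$ for every $g$. Finally, the tools you invoke for the $\mathbb{F}_4$ factor do not apply: Theorem \ref{thm_absolute_relative_0} concerns toric \emph{del Pezzo} surfaces and absolute invariants versus log invariants with contact order one, whereas $\mathbb{F}_4$ is not del Pezzo and the required invariant has a contact order $2$ condition with a point constraint \emph{on} the boundary divisor.

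The missing idea, which is how the paper breaks the circularity, is to introduce an independent auxiliary invariant: the absolute \emph{descendant} invariant $GW_{g,H}^{\PP^2,\psi}$ of $\PP^2$ (one point constraint together with a $\psi$-class, plus $(-1)^g\lambda_g$). This is computed without any reference to $\mathfrak{C}$, by comparison with log invariants of the toric boundary and the refined descendant correspondence of \cite{refined_descendants}, giving $\sum_g GW_{g,H}^{\PP^2,\psi}u^{2g}=\cos(u/2)$. Then the degeneration formula of \cite{li2002degeneration}, with the point-plus-$\psi$ insertion specialized into $\mathbb{F}_4$, expresses this \emph{known} series as
\[
2\Bigl(\textstyle\sum_{g} GW_{g,H,((2),\emptyset)}^{\PP^2/\mathfrak{C}}u^{2g-1}\Bigr)\Bigl(\textstyle\sum_{g} GW_{g,2F}^{\mathbb{F}_4/\mathfrak{C}_{-4},\psi}u^{2g-1}\Bigr),
\]
where the $\mathbb{F}_4$ factor vanishes in positive genus by Mumford's relation and its genus-zero value $\tfrac12$ is fixed by the elementary counts $GW_{0,H}^{\PP^2,\psi}=GW_{0,H,((2),\emptyset)}^{\PP^2/\mathfrak{C}}=1$. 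This yields the stated series, after which your final paragraph (the formal derivation of $BPS_{H,((2),\emptyset)}^{\PP^2/\mathfrak{C}}(q)=1$ from Definition \ref{def_relative_bps_inv}) is correct as written.
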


\begin{proof}
Let $GW_{g,H}^{\PP^2,\psi}$ be the $1$-pointed genus $g$ Gromov--Witten invariant of $\PP^2$ of class $H$, with insertion of $(-1)^g \lambda_g$, a point constraint and a psi-class. Arguing as in the proof of Theorem \ref{thm_absolute_relative}, we obtain that 
$GW_{g,H}^{\PP^2,\psi}$ equals the corresponding log Gromov--Witten invariant of $\PP^2$ endowed with its toric boundary.
Hence, it follows from \cite[Theorem A]{refined_descendants} that
\begin{equation} \label{eq_gw_2}
\sum_{g \geq 0} GW_{g,H}^{\PP^2,\psi}u^{2g} = \cos\left( \frac{u}{2}\right)=\frac{1}{2}(q^{\frac{1}{2}}+q^{-\frac{1}{2}})\,.\end{equation}
On the other hand, we obtain a different expression for $GW_{g,H}^{\PP^2, \psi}$ using the degeneration of $\PP^2$ to the normal cone of the conic $\mathfrak{C}$, with central fiber $\PP^2 \cup \mathbb{F}_4$. 
By degenerating the point insertion to a point in $\mathbb{F}_4$, the degeneration formula in relative Gromov--Witten theory \cite{li2002degeneration}
implies that 
\begin{equation}\label{eq_gw_2_2}
\sum_{g \geq 0} GW_{g,H}^{\PP^2, \psi} u^{2g-2}=  2\left(\sum_{g \geq 0} GW_{g,H, ((2),\emptyset)}^{\PP^2/\mathfrak{C}} u^{2g-1} \right) 
\left( \sum_{g\geq 0} GW_{g,2F}^{\mathbb{F}_4/\mathfrak{C}_{-4}, \psi} u^{2g-1}\right)\,.\end{equation}
Here, $GW_{g,2F}^{\mathbb{F}_4/\mathfrak{C}_{-4},\psi}$ is the $1$-pointed genus $g$ Gromov--Witten invariant of $\mathbb{F}_4$ of class $2F$, relative to $\mathfrak{C}_{-4}$ at one point of contact order $2$, and with insertion of $(-1)^g \lambda_g$, a point constraint and a psi class, see Figure \ref{Fig5}. 
As in \cite[Lemma 14]{Bou2019}, the Mumford's relation $\lambda_g^2=0$ when $g>0$ implies that $GW_{g,2F}^{\mathbb{F}_4/\mathfrak{C}_{-4},\psi}=0$ when $g>0$.
On the other hand, taking the coefficient of $u^{-2}$ in \eqref{eq_gw_2_2}, we obtain
\[ GW_{0,H}^{\PP^2, \psi} = 2 \,GW_{0,H, ((2),\emptyset)}^{\PP^2/\mathfrak{C}}\,  GW_{0,2F}^{\mathbb{F}_4/\mathfrak{C}_{-4}, \psi}\,. \]
Since there is a unique line passing though a given point with given tangent line, we have $GW_{0,H}^{\PP^2, \psi}=1$. Moreover, since there is a unique line tangent to a given conic at a given point, we also have $GW_{0,H, ((2),\emptyset)}^{\PP^2/\mathfrak{C}}=1$. 
Consequently, we have $GW_{0,2F}^{\mathbb{F}_4/\mathfrak{C}_{-4}, \psi}=\frac{1}{2}$, and so Equation \eqref{eq_gw_2_2} can be rewritten as
\[\sum_{g \geq 0} GW_{g,H}^{\PP^2, \psi} u^{2g-2}=  \sum_{g \geq 0} GW_{g,H, ((2),\emptyset)}^{\PP^2/\mathfrak{C}} u^{2g-2} \,.\]
Therefore, Lemma \ref{lem_gw_2} follows from Equation \eqref{eq_gw_2}.
\end{proof}

\begin{figure}[htb]
\begin{center}
\includegraphics[height=1.2in,width=5in,angle=0]{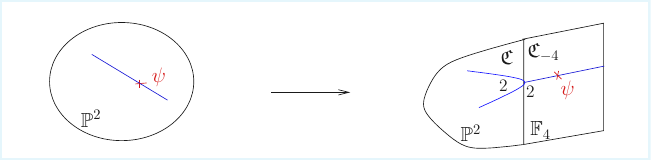}
\caption{The degeneration used in the proof of Lemma \ref{lem_gw_2}.}
\label{Fig5}
\end{center}
\end{figure}


\subsubsection{Relative BPS polynomials from refined counts of floor diagrams}
\label{subsec_relative_floor}

In this section, we prove that the relative Gromov--Witten invariants $GW_{g,\beta, (\mu,\nu)}^{\widetilde{S}_n/\widetilde{\mathfrak{C}}}$ and the 
corresponding BPS polynomials $BPS_{\beta, (\mu,\nu)}^{\widetilde{S}_n/\widetilde{\mathfrak{C}}}(q)$ defined \S\ref{sec:relative_bps} can be computed in terms of marked floor diagrams as in Definition \ref{def: marking on floor}, counted with refined multiplicities defined as follows.
Recall that every nonnegative integer $N$, we denote by $[N]_q$ the corresponding $q$-integer defined by \eqref{eq_q_integer}.

\begin{definition} \label{def_q_mult}
The \emph{refined multiplicity} of a marked floor diagram $\Gamma$ of type $(\mu,\nu)$ is
\[ m_\Gamma(q):= 
\left( \prod_{j=1}^{\ell(\nu)} \nu_j\right) 
\prod_{e\in E(\Gamma)} [w_e]_q^2 \in \ZZ[q^\pm]\,.\]
\end{definition}

\begin{remark}
    In the limit $q \rightarrow 1$, the refined multiplicity of a marked floor diagram reduces to the multiplicity $\prod_{j=1}^{\ell(\nu)} \nu_j\prod_{e\in E(\Gamma)} w_e^2$ considered in \cite[Definition 3.5]{BrugFloorconic}
\end{remark}

\begin{definition} \label{def_count_floor}
The \emph{refined count with multiplicity} of marked floor diagrams of genus zero, class $\beta$, and type $(\mu,\nu)$
\[ N_{\beta, (\mu,\nu)}^{\mathrm{floor}}(q)=\sum_\Gamma m_\Gamma(q) \in \ZZ[q^\pm]\]
where the sum is over the isomorphism classes of marked floor diagrams of class $\beta$ and type $(\mu,\nu)$.
\end{definition}

\begin{theorem}\label{thm_floor_gw}
Let $\beta \in H_2(\widetilde{S}_n,\ZZ)$ such that $H \cdot \beta \geq 1$, and $\mu$, $\nu$ as in \eqref{eq_mu_nu}. Then, the relative Gromov--Witten invariants $GW_{g,\beta, (\mu,\nu)}^{\widetilde{S}_n/\widetilde{\mathfrak{C}}}$ satisfy
\begin{align*}
&\sum_{g \geq 0}GW_{g,\beta, (\mu,\nu)}^{\widetilde{S}_n/\widetilde{\mathfrak{C}}} u^{2g-2+H \cdot \beta+\ell(\mu)+\ell(\nu)} \\
&= 
\left(\prod_{j=1}^{\ell(\mu)} \frac{[\mu_j]_q}{\mu_j} \right) 
\left(\prod_{j=1}^{\ell(\nu)} \frac{[\nu_j]_q}{\nu_j} \right)
((-i)(q^{\frac{1}{2}}-q^{-\frac{1}{2}}))^{-2+H\cdot \beta+\ell(\mu)+\ell(\nu)} N_{\beta, (\mu,\nu)}^{\mathrm{floor}}(q) \,,
\end{align*}
where $q=e^{iu}$ and $N_{\beta, (\mu,\nu)}^{\mathrm{floor}}(q)$ is the refine
count with multiplicity of marked floor diagrams 
of genus zero, class $\beta$, and type $(\mu,\nu)$.
\end{theorem}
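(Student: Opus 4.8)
The plan is to compute the left-hand side by degenerating $\widetilde{S}_n$ via the degeneration $\widetilde{\epsilon}\colon \widetilde{\mathcal{F}} \to \CC$ constructed in \S\ref{subsub: a degeneration} and applying the degeneration formula in relative Gromov--Witten theory \cite{li2002degeneration}. First I would specialize the $m_{\beta,(\mu,\nu)}$ point constraints of contact order zero so that the $i$-th point degenerates into the component $\mathbb{F}_4^{(i)}$, exactly as prescribed in \S\ref{subsub: a degeneration}. The degeneration formula then expresses $\sum_{g \geq 0} GW_{g,\beta,(\mu,\nu)}^{\widetilde{S}_n/\widetilde{\mathfrak{C}}}\, u^{2g-2+H\cdot\beta+\ell(\mu)+\ell(\nu)}$ as a sum over configurations of stable maps to the central fiber $\widetilde{\epsilon}^{-1}(0)=\PP^2 \cup \mathbb{F}_4^{(m)} \cup \cdots \cup \mathbb{F}_4^{(1)} \cup \mathrm{Bl}_n\mathbb{F}_4$, glued along the chain of conics, with the $(-1)^g\lambda_g$ insertion distributed according to its splitting property as in \cite{Bou2019}.

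The second step is to cut down the surviving configurations using the $\lambda_g$ insertion. As in the proof of Theorem~\ref{thm_absolute_relative_0}, the vanishing of $\lambda_g$ on the boundary strata whose dual graph carries a loop forces the dual graph to be a tree (total genus-zero), while Mumford's relation $\lambda_g^2=0$, used exactly as in the proofs of Lemmas~\ref{lem_gw_11} and \ref{lem_gw_2}, forces the fiber-class pieces to have genus zero. I would then identify these surviving configurations with the marked floor diagrams of class $\beta$ and type $(\mu,\nu)$ of Definition~\ref{def: marking on floor}, following the dictionary of \S\ref{subsec: curves and floors} and \cite{BrugFloorconic}: grey vertices correspond to lines in $\PP^2$, white vertices marked by $i$ to floors of class $C_{-4}+bF$ in $\mathbb{F}_4^{(i)}$, internal edges of weight $w$ to chains of fibers of class $wF$, and the three families of legs $L_\mu, L_\nu, A_j$ to contact points with $\widetilde{\mathfrak{C}}$ and with the exceptional divisors. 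The increasing-bijection condition on $\phi$ records the linear ordering of the floors along the chain of components, and the factors $\tfrac{1}{|\mathrm{Aut}(\mu)|}\tfrac{1}{|\mathrm{Aut}(\nu)|}$ in \eqref{def: gw} account for the overcounting of isomorphic diagrams.

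The third step is to evaluate each local (vertex) contribution as a generating series in $u$, so that the global series factors as a product over the floor diagram. The grey-vertex contributions are supplied directly by the building-block computations: a $V_2^{(1,1)}$ vertex contributes $u^{-1}(-i)(q^{1/2}-q^{-1/2})$ by Lemma~\ref{lem_gw_11}, and the tangency computation of Lemma~\ref{lem_gw_2} produces the leg factor $\tfrac{[\mu_j]_q}{\mu_j}$ attached to a contact condition of weight $\mu_j$, with the analogous factor $\tfrac{[\nu_j]_q}{\nu_j}$ for the $\nu$-legs. The white-vertex ($\mathbb{F}_4$-floor) contributions and the gluing factors along the internal edges are computed by the same degeneration-plus-$\lambda_g$ technique, refining to the level of generating series the $q=1$ computation of \cite{BrugFloorconic}, and yield the edge factors $[w_e]_q$. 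Collecting all local factors, converting each occurrence of $u^{-1}\,2\sin(u/2)$ into $(-i)(q^{1/2}-q^{-1/2})$, and recalling Definitions~\ref{def_q_mult} and \ref{def_count_floor}, reproduces the refined multiplicity $m_\Gamma(q)=\big(\prod_j \nu_j\big)\prod_e [w_e]_q^2$ together with the stated leg factors.

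I expect the main obstacle to be precisely this last step: verifying that the $\mathbb{F}_4$-floor vertices together with the node gluing produce exactly $[w_e]_q^2$ per edge rather than some other $q$-integer, and that the powers of $u$ and of $(-i)(q^{1/2}-q^{-1/2})$ bookkeep into the single global exponent $-2+H\cdot\beta+\ell(\mu)+\ell(\nu)$. Controlling this requires an Euler-characteristic count over the floor diagram, using the degree identity $|V_2(\Gamma)|+2|V_4(\Gamma)|=d$ of Lemma~\ref{lem_floor} to track the numbers of vertices, edges and legs, so that the accumulated normalization factors assemble into the exact monomial prefactor appearing on the right-hand side.
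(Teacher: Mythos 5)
Your overall strategy coincides with the paper's: degenerate to the central fiber $\PP^2 \cup \mathbb{F}_4^{(m)} \cup \cdots \cup \mathbb{F}_4^{(1)} \cup \mathrm{Bl}_n\mathbb{F}_4$, use the vanishing properties of $\lambda_g$ to reduce to configurations indexed by genus-zero marked floor diagrams, write the contribution of each diagram as a product of local vertex/edge/leg factors, and assemble the exponents via an Euler-characteristic count together with Lemma \ref{lem_floor}. (The paper runs the degeneration through the log-geometric decomposition formula of \cite{ACGS_decomposition} rather than iterated applications of \cite{li2002degeneration}, but that difference is cosmetic.)

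There is, however, a genuine gap at the step you yourself flag as the main obstacle, and it is not just bookkeeping. The white-vertex contributions --- the all-genus, $\lambda_g$-decorated relative invariants $GW_{g,\mathfrak{C}_{4}+|m_v|F,(m_v,n_v)}^{\mathbb{F}_4/\mathfrak{C}_{-4}\cup \mathfrak{C}_4}$ of a floor, with arbitrary tangency partitions $m_v$, $n_v$ along the two disjoint divisors $\mathfrak{C}_{-4}$ and $\mathfrak{C}_4$ --- cannot be obtained ``by the same degeneration-plus-$\lambda_g$ technique'' as Lemmas \ref{lem_gw_11} and \ref{lem_gw_2}: those lemmas treat only lines in $\PP^2$ with contact orders $(1,1)$ or $(2)$, and their proofs lean on the toric correspondence results of \cite{Bou2019} and \cite{refined_descendants}, which are not available for the floor geometry. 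The paper does not rederive these contributions; it imports them from \cite[Theorem 4.4]{bousseau_floor}, and it is exactly there that each half-edge of weight $w$ acquires its factor $[w]_q/w$, so that combined with the multiplicity $w_e^2$ from the gluing each edge yields $[w_e]_q^2$. Relatedly, you attribute the leg factors $[\mu_j]_q/\mu_j$ and $[\nu_j]_q/\nu_j$ to Lemma \ref{lem_gw_2}; in the paper these also come out of the $V_4$-vertex contributions of \cite[Theorem 4.4]{bousseau_floor}, Lemma \ref{lem_gw_2} being used only for the gray vertices in $V_2^{(2)}(\Gamma)$. Finally, you omit the factor $((-i)(q^{\frac{1}{2}}-q^{-\frac{1}{2}}))^{-1}$ contributed by each leg in $A_1 \amalg \cdots \amalg A_n$, i.e.\ by the maps to the $(-1)$-curves in $\mathrm{Bl}_n\mathbb{F}_4$ meeting the exceptional curves $E_i$, which the paper evaluates by \cite[Lemma 6.3]{BP_local_curves}. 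This omission is fatal to the exponent count: the identity \eqref{eq_combinatorics} matching the accumulated power of $(-i)(q^{\frac{1}{2}}-q^{-\frac{1}{2}})$ with $-2+H\cdot\beta+\ell(\mu)+\ell(\nu)$ contains the term $-\sum_{i=1}^n|A_i|$ coming precisely from these legs, and without it the stated global prefactor is off by $\sum_{i=1}^n \beta\cdot E_i$, which is nonzero in general.
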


\begin{proof}
We calculate the relative Gromov--Witten invariants
$GW_{g,\beta, (\mu,\nu)}^{\widetilde{S}_n/\widetilde{\mathfrak{C}}}$ using the degeneration
$\widetilde{\epsilon}: \widetilde{\mathcal{F}} \rightarrow \CC$ constructed in 
\S\ref{subsub: a degeneration}, with central fiber $\widetilde{\epsilon}^{-1}(0)=\PP^2 \cup \mathbb{F}_4^{(m)} \cup \cdots \cup \mathbb{F}_4^{(1)} \cup \mathrm{Bl}_n \mathbb{F}_4$.
To do this, we apply the general degeneration formalism in log Gromov--Witten theory. We endow $\CC$ with the divisorial log structure defined by the divisor $\{0\}\subset \CC$, and $\widetilde{\mathcal{F}} \rightarrow \CC$ with the divisorial log structure defined by the normal crossing divisor $\overline{\widetilde{\mathfrak{C}}\times \CC^\star} \cup \widetilde{\epsilon}^{-1}(0)$, where 
$\overline{\widetilde{\mathfrak{C}}\times \CC^\star}$ denote the closure of $\widetilde{\mathfrak{C}}\times \CC^\star$ in $\widetilde{\mathcal{F}}$. Then, $\widetilde{\epsilon}: \widetilde{\mathcal{F}} \rightarrow \CC$ is a log smooth morphism. 

By the decomposition formula of \cite{ACGS_decomposition}, the relative Gromov--Witten invariants $GW_{g,\beta}^{\widetilde{S}_n/\widetilde{\mathfrak{C}}}(\mu,\nu)$ can be expressed as a sum
of $h$-marked log Gromov--Witten invariants of the central fiber $\widetilde{\epsilon}^{-1}(0)$ endowed with the restricted log structure, where $h$ are rigid tropical maps to the tropicalization of $\widetilde{\epsilon}^{-1}(0)$. Using the vanishing properties of lambda class, we show as in \cite[\S 5.3]{bousseau_floor} that the only contributing rigid tropical maps are in natural one-to-one correspondence with the marked floor diagrams $\Gamma$ of genus zero, class $\beta$, and type $(\mu,\nu)$. The contribution of each floor diagrams can then be decomposed as a product of vertex and edge contributions as in \cite[\S 5.3]{bousseau_floor}. We refer to \S \ref{subsec: marked floor} for the notation $V(\Gamma)=V_2(\Gamma)\amalg V_4(\Gamma)$, $V_2(\Gamma)=V_2^{(2)}(\Gamma) \amalg V_2^{(1,1)}(\Gamma)$, $E(\Gamma)$, and $L(\Gamma) = L_\mu \amalg L_\nu \amalg A_1 \amalg \ldots \amalg A_n$ describing the sets of vertices, edges, and legs of a marked floor diagram $\Gamma$.

Explicitly, the contribution of a marked floor diagram $\Gamma$ to the generating series 
\[\sum_{g \geq 0}GW_{g,\beta, (\mu,\nu)}^{\widetilde{S}_n/\widetilde{\mathfrak{C}}} u^{2g-2+\beta \cdot \widetilde{\mathfrak{C}}}\] 
is the product of the following factors:
\begin{itemize}
    \item[i)] A factor $w_e^2$ for each edge $e \in E(\Gamma)$.
    \item[ii)] A factor $w_l$ for each leg $l \in L_\nu$.
    \item[iii)] By Lemma \ref{lem_gw_11}, a factor \[\sum_{g \geq 0} GW_{g,H, ((1,1),\emptyset)}^{\PP^2/\mathfrak{C}} u^{2g}=u^{-1 }2 \sin \left( \frac{u}{2}\right) =u^{-1} (-i)(q^{\frac{1}{2}}-q^{-\frac{1}{2}}),\] for each vertex $v \in V_2^{(1,1)}(\Gamma)$.
    \item[iv)] By Lemma \ref{lem_gw_2}, a factor
    \[ \sum_{g \geq 0} GW_{g,H, ((2),\emptyset)}^{\PP^2/\mathfrak{C}} u^{2g-1}=u^{-1}\cos \left( \frac{u}{2}\right) =u^{-1} \frac{1}{2}(q^{\frac{1}{2}}+q^{-\frac{1}{2}})=u^{-1}\frac{[2]_q}{2}\,\] 
    for each vertex $v\in V_2^{(2)}(\Gamma)$
\end{itemize}

\begin{itemize}
\item[v)] A factor \[ \sum_{g \geq 0} GW_{g, \mathfrak{C}_{4}+|m_v|F, (m_v, n_v)}^{\mathbb{F}_4/\mathfrak{C}_{-4}\cup \mathfrak{C}_4} u^{2g-2+\ell(m_v)+\ell(n_v)}\] for each vertex $v \in V_4(\Gamma)$, where
$m_v=(m_{v,j})_{1\leq j \leq \ell(m_v)}$ is the partition formed by the edge weights adjacent to $v$ oriented away from $v$, $|m_v|=\sum_{j=1}^{\ell(m_v)} m_{v,j}$, and
$n_v=(n_{v,j})_{1\leq j \leq \ell(n_v)}$ is the partition formed by the edge or leg weights adjacent to $v$ oriented towards $v$. 
Moreover,
\[ GW_{g, \mathfrak{C}_{4}+|m_v|F, (m_v, n_v)}^{\mathbb{F}_4/\mathfrak{C}_{-4}\cup \mathfrak{C}_4}\]
is the genus $g$ Gromov--Witten invariant of $\mathbb{F}_4$, relative to $\mathfrak{C}_{-4}\cup \mathfrak{C}_4$, of class $\mathfrak{C}_{4}+|m_v|F$, with the following insertions -- see Figure \ref{Fig10}: $(-1)^g \lambda_g$, one point constraint in $\mathbb{F}_4$ away from
$\mathfrak{C}_{-4}\cup \mathfrak{C}_4$, $\ell(m_v)$ point constraints at marked points with contact orders $(m_v)_{1\leq l \leq\ell(m_v)}$ along $\mathfrak{C}_{-4}$, and $\ell(n_v)$ point constraints at marked points with contact orders $(n_v)_{1\leq l \leq\ell(n_v)}$ along $\mathfrak{C}_{4}$.
By \cite[Theorem 4.4]{bousseau_floor}, this factor equals 
\[ u^{-2} \left((-i)(q^{\frac{1}{2}}-q^{-\frac{1}{2}})\right)^{\ell(m_v)+\ell(n_v)} \prod_{j=1}^{\ell(m_v)} \frac{[m_{v,j}]_q}{m_{v,j}} \prod_{j=1}^{\ell(n_v)} \frac{[n_{v,j}]_q}{n_{v,j}} \,.\]
\item[vi)] By \cite[Lemma 6.3]{BP_local_curves}, a factor 
\[\frac{1}{2 \sin \left(\frac{u}{2}\right)}=((-i)(q^{\frac{1}{2}}-q^{-\frac{1}{2}}))^{-1}\] 
for each leg $l \in \amalg_{i=1}^n A_i$ corresponding to maps to the $(-1)$-curve in $\mathrm{Bl}_n(\mathbb{F}_4)$ intersecting the exceptional curve $E_i$.
\end{itemize}

Consequently, the contribution of the marked floor diagram $\Gamma$ is given by
\[ \left( \prod_{e\in E(\Gamma)} w_e^2 \prod_{l \in L_\nu}w_l\right)
u^{-|V_2(\Gamma)|-2|V_4(\Gamma)|} 
((-i)(q^{\frac{1}{2}}-q^{-\frac{1}{2}}))^{|V_2^{(1,1)}(\Gamma)|+\sum_{v\in V_4(\Gamma)}(\ell(m_v)+\ell(n_v)) -\sum_{i=1}^n|A_i|} \]
\[ \times \left(\frac{[2]_q}{2}\right)^{|V_2^{(2)}(\Gamma)|}
\prod_{j=1}^{\ell(m_v)} \frac{[m_{v,j}]_q}{m_{v,j}} \prod_{j=1}^{\ell(n_v)} \frac{[n_{v,j}]_q}{n_{v,j}} \,.\]
Every edge $e$ of $\Gamma$ with weight $w_e\neq 1$ is adjacent to either two vertices in $V_4(\Gamma)$ or to one vertex in $V_4(\Gamma)$ and one vertex in $V_2^{(2)}(\Gamma)$. Similarly, every leg $l$ of $\Gamma$ with $w_l \neq 1$ is adjacent to either a vertex $v \in V_4(\Gamma)$ or to a vertex $v\in V_2^{(2)}(\Gamma)$. 
Thus, we obtain
\begin{align*}
   & \left( \prod_{e\in E(\Gamma)} w_e^2 \prod_{l \in L_\nu}w_l\right)\left(\frac{[2]_q}{2}\right)^{|V_2^{(2)}(\Gamma)|}
\prod_{j=1}^{\ell(m_v)} \frac{[m_{v,j}]_q}{m_{v,j}} \prod_{j=1}^{\ell(n_v)} \frac{[n_{v,j}]_q}{n_{v,j}}
\\=& \left(\prod_{j=1}^{\ell(\mu)} \frac{[\mu_j]_q}{\mu_j} \right) 
\left(\prod_{j=1}^{\ell(\nu)} [\nu_j]_q \right) \prod_{e\in E(\Gamma)} [w_e]_q^2\\=& m_\Gamma(q) \,.
\end{align*}
By Lemma \ref{lem_floor}, we have $|V_2(\Gamma)|+2|V_4(\Gamma)|=H \cdot \beta$, and so the 
contribution of
$\Gamma$ simplifies to 
\[ u^{-H \cdot \beta}\left(\prod_{j=1}^{\ell(\mu)} \frac{[\mu_j]_q}{\mu_j} \right) 
\left(\prod_{j=1}^{\ell(\nu)} \frac{[\nu_j]_q}{\nu_j} \right)((-i)(q^{\frac{1}{2}}-q^{-\frac{1}{2}}))^{|V_2^{(1,1)}(\Gamma)|+\sum_{v\in V_4(\Gamma)}(\ell(m_v)+\ell(n_v)) -\sum_{i=1}^n|A_i|} m_\Gamma(q)\,,\]
where we used the Definition \ref{def_q_mult} of $m_\Gamma(q)$.

Therefore, to prove Theorem \ref{thm_floor_gw}, it remains only to show that 
\begin{equation}\label{eq_combinatorics}
|V_2^{(1,1)}(\Gamma)|+\sum_{v\in V_4(\Gamma)}(\ell(m_v)+\ell(n_v)) -\sum_{i=1}^n|A_i|= -2+H \cdot \beta +\ell(\mu)+\ell(\nu)\,.
\end{equation}
We first observe that $\sum_{v\in V_4(\Gamma)}(\ell(m_v)+\ell(n_v))$ equals the total number of half-edges or legs adjacent to vertices in $V_4(\Gamma)$, that is, all half-edges of legs except the ones adjacent to vertices in $V_2(\Gamma)$. Hence, we have 
\[ \sum_{v\in V_4(\Gamma)}(\ell(m_v)+\ell(n_v)) = 2|E(\Gamma)|+|L(\Gamma)|-2 |V_2^{(1,1)}(\Gamma)|-|V_2^{(2)}(\Gamma)|\,.\]
Thus, we have 
\begin{align*}
   & |V_2^{(1,1)}(\Gamma)|+\sum_{v\in V_4(\Gamma)}(\ell(m_v)+\ell(n_v)) -\sum_{i=1}^n|A_i| \\
   & = 2|E(\Gamma)|+|L(\Gamma)|-|V_2^{(1,1)}(\Gamma)|-|V_2^{(2)}(\Gamma)|-\sum_{i=1}^n|A_i|
    \\&=2|E(\Gamma)|
+|L(\Gamma)|-|V_2(\Gamma)|-\sum_{i=1}^n|A_i|\,.
\end{align*}
Since the graph $\Gamma$ is of genus zero, we have $|E(\Gamma)|=|V(\Gamma)|-1$, so we obtain
\[|V_2^{(1,1)}(\Gamma)|+\sum_{v\in V_4(\Gamma)}(\ell(m_v)+\ell(n_v))-\sum_{i=1}^n|A_i|=2|V(\Gamma)|-2+|L(\Gamma)|-|V_2(\Gamma)|-\sum_{i=1}^n|A_i|\,.\]
Using that  $|V(\Gamma)|=|V_2(\Gamma)|+|V_4(\Gamma)|$, 
$|L(\Gamma)|-\sum_{i=1}^n|A_i|=\ell(\mu)+\ell(\nu)$, and $|V_2(\Gamma)|+2|V_4(\Gamma)|=H \cdot \beta$ by Lemma \ref{lem_floor}, we finally
obtain Equation \eqref{eq_combinatorics}, and this concludes the proof of Theorem \ref{thm_floor_gw}.
\end{proof}

\begin{figure}[htb]
\begin{center}
\includegraphics[height=0.8in,width=2in,angle=0]{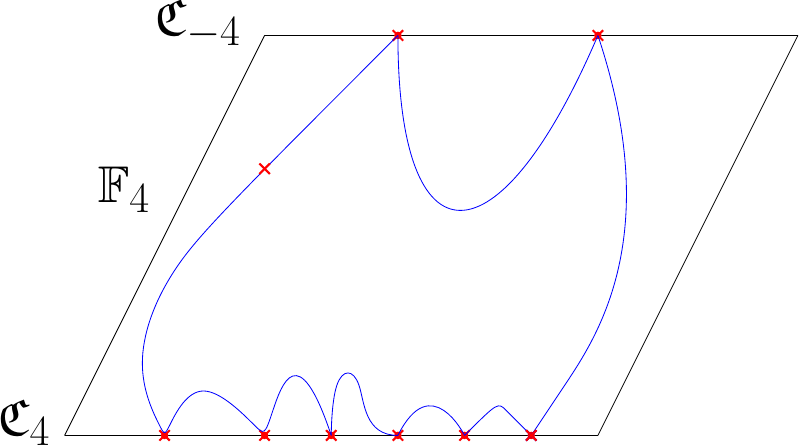}
 \caption{Curve contributing to  $GW_{g, \mathfrak{C}_{4}+|m_v|F, (m_v, n_v)}^{\mathbb{F}_4/\mathfrak{C}_{-4}\cup \mathfrak{C}_4}$}
    \label{Fig10}
\end{center}
\end{figure}


\begin{corollary} \label{cor_bps_floor}
    Let $\beta \in H_2(\widetilde{S}_n,\ZZ)$ such that $H \cdot \beta \geq 1$, and $\mu$, $\nu$ as in \eqref{eq_mu_nu}.  Then, the relative BPS polynomial $BPS_{\beta,(\mu,\nu)}^{\widetilde{S}_n/\mathfrak{C}}(q)$ 
    is equal to the refined count with multiplicity $N_{\beta, (\mu,\nu)}^{\mathrm{floor}}(q)$ of marked floor diagrams:
\[ BPS_{\beta}^{\widetilde{S}_n/\widetilde{\mathfrak{C}}}(q) =N_{\beta, (\mu,\nu)}^{\mathrm{floor}}(q) \,. \]
In particular, $BPS_{g,\beta}^{\widetilde{S}_n/\widetilde{\mathfrak{C}}}(q)$ is a Laurent polynomial with integer coefficients:
\[ BPS_{g,\beta}^{\widetilde{S}_n/\widetilde{\mathfrak{C}}}(q) \in \ZZ[q^\pm] \,.\]
\end{corollary}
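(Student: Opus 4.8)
The plan is to derive the corollary purely formally by matching the two generating-function identities already available: the floor-diagram formula of Theorem \ref{thm_floor_gw} and the defining relation \eqref{eq_relative_bps_inv} of the relative BPS invariants. Both express the \emph{same} series
\[
\sum_{g \geq 0} GW_{g,\beta,(\mu,\nu)}^{\widetilde{S}_n/\widetilde{\mathfrak{C}}}\, u^{2g-2+H\cdot\beta+\ell(\mu)+\ell(\nu)},
\]
so equating their right-hand sides will relate $\sum_{g} BPS_{g,\beta,(\mu,\nu)}^{\widetilde{S}_n/\widetilde{\mathfrak{C}}}(2\sin(u/2))^{2g-2+H\cdot\beta}$ directly to $N_{\beta,(\mu,\nu)}^{\mathrm{floor}}(q)$, with no further geometric input.

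First I would record the elementary $q$-integer identity: under $q=e^{iu}$ one has, for every positive integer $k$,
\[
2\sin\!\left(\frac{ku}{2}\right) = (-i)(q^{k/2}-q^{-k/2}) = [k]_q\,(-i)(q^{1/2}-q^{-1/2}) = [k]_q\,\cdot 2\sin\!\left(\frac{u}{2}\right),
\]
by the definition \eqref{eq_q_integer} of $[k]_q$, hence $\tfrac{1}{k}2\sin(ku/2)=\tfrac{[k]_q}{k}\,2\sin(u/2)$. Applying this to every factor $\mu_j$ and $\nu_j$ in \eqref{eq_relative_bps_inv} rewrites its right-hand side as
\[
\left(\prod_{j=1}^{\ell(\mu)} \frac{[\mu_j]_q}{\mu_j}\right)\!\left(\prod_{j=1}^{\ell(\nu)} \frac{[\nu_j]_q}{\nu_j}\right)\!\left(2\sin\tfrac{u}{2}\right)^{\ell(\mu)+\ell(\nu)}\!\sum_{g \geq 0} BPS_{g,\beta,(\mu,\nu)}^{\widetilde{S}_n/\widetilde{\mathfrak{C}}}\left(2\sin\tfrac{u}{2}\right)^{2g-2+H\cdot\beta}.
\]

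Next I would set this equal to the right-hand side of Theorem \ref{thm_floor_gw}, in which I replace $(-i)(q^{1/2}-q^{-1/2})$ by $2\sin(u/2)$. The two sides share the common factor $\prod_j \frac{[\mu_j]_q}{\mu_j}\prod_j \frac{[\nu_j]_q}{\nu_j}$, which I cancel, and dividing through by the common power $(2\sin(u/2))^{-2+H\cdot\beta+\ell(\mu)+\ell(\nu)}$ leaves exactly
\[
\sum_{g \geq 0} BPS_{g,\beta,(\mu,\nu)}^{\widetilde{S}_n/\widetilde{\mathfrak{C}}}\left(2\sin\tfrac{u}{2}\right)^{2g} = N_{\beta,(\mu,\nu)}^{\mathrm{floor}}(q).
\]
By Definition \ref{def_relative_BPS} the left-hand side is $BPS_{\beta,(\mu,\nu)}^{\widetilde{S}_n/\widetilde{\mathfrak{C}}}(q)$, yielding the claimed equality $BPS_{\beta,(\mu,\nu)}^{\widetilde{S}_n/\widetilde{\mathfrak{C}}}(q)=N_{\beta,(\mu,\nu)}^{\mathrm{floor}}(q)$. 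The integrality and polynomiality are then immediate: by Definition \ref{def_q_mult} each refined multiplicity $m_\Gamma(q)$ lies in $\ZZ[q^\pm]$, and $N_{\beta,(\mu,\nu)}^{\mathrm{floor}}(q)$ is a finite sum of such, so $BPS_{\beta,(\mu,\nu)}^{\widetilde{S}_n/\widetilde{\mathfrak{C}}}(q)\in\ZZ[q^\pm]$.

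Since the substantial work—the degeneration $\widetilde{\epsilon}$ and the vertex/edge contribution analysis—has already been done in Theorem \ref{thm_floor_gw}, the present statement is essentially bookkeeping. The one point requiring care, and the closest thing to an obstacle, is justifying the cancellation: I would work with Laurent series in $u$, so that $2\sin(u/2)$ is a unit times a power series with nonzero leading term and the factor $\prod_j \frac{[\mu_j]_q}{\mu_j}\prod_j \frac{[\nu_j]_q}{\nu_j}$ has nonzero value at $u=0$ and is therefore invertible; this makes the division and the consistent application of $q=e^{iu}$ on both sides legitimate, reducing the claim to an identity of Laurent series.
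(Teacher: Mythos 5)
Your proof is correct and follows exactly the paper's approach: the paper's own proof is precisely the comparison of Theorem \ref{thm_floor_gw} with Definitions \ref{def_relative_bps_inv}--\ref{def_relative_BPS}, which you have simply carried out explicitly, including the $q$-integer identity $2\sin(ku/2)=[k]_q\cdot 2\sin(u/2)$ and the cancellation of common factors. The extra care you take in justifying the division as an identity of Laurent series in $u$ is a sound elaboration of the same bookkeeping argument.
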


\begin{proof}
The result follows from comparing Theorem \ref{thm_floor_gw} with Definitions \ref{def_relative_bps_inv}-\ref{def_relative_BPS}, which define the relative BPS invariants and relative BPS polynomials.
\end{proof}

\subsection{Relative BPS polynomials and relative Welschinger counts}
\label{sec_relative_bps_w}

In this section, we first define relative Welschinger counts of real rational curves in $(\widetilde{S}_n, \widetilde{\mathfrak{C}})$. 
Then, we prove in Theorem \ref{thm: relative_BPS_W} that the specialization at $q=-1$ of the relative BPS polynomials of $(\widetilde{S}_n,\widetilde{\mathfrak{C}})$ can be expressed in terms of these relative Welschinger counts.

\subsubsection{Relative Welschinger counts}
\label{subsec:relative Welschinger counts}
In this section, we equip $\PP^2$ with its standard real structure whose real locus is $\RR\PP^2$, and we fix $\mathfrak{C}$ a smooth real conic in $\PP^2$ with non-empty real locus.
We denote by $\widetilde{S}_n$ the real surface obtained by blowing up $\PP^2$ at $n$ general real points of $\mathfrak{C}$, and we denote by 
$\widetilde{\mathfrak{C}}$ the strict transform of $\mathfrak{C}$ in $\widetilde{S}_n$.

\begin{definition}
\label{def: real points}
Let $\beta \in H_2(\widetilde{S}_n,\ZZ)$ such that $H \cdot \beta \geq 1$, and $\mu$, $\nu$ as in \eqref{eq_mu_nu}. 
A \emph{configuration of real points} of type $(\beta,\mu,\nu)$ is given by a disjoint union 
\[   \mathbf{x} =  \mathbf{x}^0 \amalg  \mathbf{x}^{\widetilde{\mathfrak{C}}} \,,\]
where $ \mathbf{x}^0=(x_1^0,\ldots, x^0_{m_{\beta,(\mu,\nu)}})$ is a set of $m_{\beta,(\mu,\nu)}:=H \cdot \beta -1+\ell(\nu)$ real points in $\widetilde{S}_n \setminus \widetilde{\mathfrak{C}}$, and $\mathbf{x}^{\widetilde{\mathfrak{C}}} = (x_1^{\widetilde{\mathfrak{C}}},\ldots, x_{\ell(\nu)}^{\widetilde{\mathfrak{C}}} )$ is a set of $\ell(\mu)$ real points on $\widetilde{\mathfrak{C}}$.
\end{definition}

Given a configuration of real points $\mathbf{x}$ of type $(\beta,\mu,\nu)$, we consider the set $M^{\RR}_{0,\beta}(\widetilde{S}_n/\widetilde{\mathfrak{C}}, \mu,\nu,\mathbf{x})$ of real genus zero stable maps 
\begin{equation}
\label{eq: f}
f: (C,(p_i^0)_{1\leq i \leq m_{\beta,(\mu,\nu)}} , (p_j^{\widetilde{\mathfrak{C}}})_{1\leq j \leq \ell(\mu)} , (p_k)_{1\leq k \leq \ell(\nu)} ) \to \widetilde{S}_n  \,,    
\end{equation}
of class $\beta$ satisfying the following:
\begin{itemize}
    \item[1)] $f(p_i^0)  = x_i^0$
\item[2)] $f(p_j^{\widetilde{\mathfrak{C}}})  = x_j^{\widetilde{\mathfrak{C}}} ~ \text{and the contact order of} ~ f(p_j^{\widetilde{\mathfrak{C}}}) ~ \text{with} ~ \widetilde{\mathfrak{C}} ~ \text{is} ~ \mu_j,$
\item[3)] $f(p_k)$ has contact order $\nu_k$ with $\widetilde{\mathfrak{C}}$,
\item[4)] $\widetilde{\mathfrak{C}}$ is not a component of $f(C)$: In particular, $f(C)$ intersects $\widetilde{\mathfrak{C}}$ only at the union of points $f(p_j^{\widetilde{\mathfrak{C}}})$ for $1\leq j \leq \ell(\mu)$ and $f(p_k)$ for $1\leq k \leq \ell(\nu)$.
\end{itemize}
If $\mathbf{x}$ is a generic configuration of real points of type $(\beta,\mu,\nu)$, then, by \cite[Proposition 2.1]{SS}, then the domain curve $C$ of such a stable map is smooth and irreducible, hence $C \cong \PP^1$. Moreover, $f$ is an immersion, birational onto its image, $f(C)$ intersects $\widetilde{\mathfrak{C}}$ only at non-singular points, and the set $M^{\RR}_{0,\beta}(\widetilde{S}_n/\widetilde{\mathfrak{C}}, \mu,\nu,\mathbf{x})$ is finite. Hence, as in \cite[\S2.3]{BrugFloorconic}, we can define a relative Welschinger count
$W_{\beta, (\mu,\nu), \mathbf{x}}^{\widetilde{S}_n/\widetilde{\mathfrak{C}}}$
by 
\[W_{\beta,(\mu,\nu),\mathbf{x}}^{\widetilde{S}_n/\widetilde{\mathfrak{C}}} 
= \sum_{f \in M^{\RR}_{0,\beta}(\widetilde{S}_n/\widetilde{\mathfrak{C}}, \mu,\nu,\mathbf{x})} w(f)\,, \]
Here, $w(f) \in \{\pm 1\}$ is the Weslchinger sign
defined as in \cite[\S 3.6]{IKS2} by  
\[w(f)=(-1)^{s(f)}\,,\]
where
\begin{equation}
s(f)= \sum_{z \in \text{Sing(f(C))}}     s(f,z) \,,
\end{equation}
where the sum runs over all points $z$ in the singular locus $\text{Sing}(f(C))$ of $f(C)$, and $s(f,z)$ denotes the number of pairs of imaginary complex conjugate local branches of $f(C)$ at $z$, each pair being counted with the weight equal to the intersection number of the branches.

As noted in \cite[\S 2.3]{BrugFloorconic}, the relative Welschinger counts $W_{\beta, (\mu,\nu), \mathbf{x}}^{\widetilde{S}_n/\widetilde{\mathfrak{C}}}$ may vary with the choice of $\mathbf{x}$.

\subsubsection{Relative Welschinger counts from relative BPS polynomials at $q=-1$}

In the following result, we relate the specialization at $q=-1$ of the relative BPS polynomial $BPS_{\beta,(\mu,\nu)}^{\widetilde{S}_n/\widetilde{\mathfrak{C}}}(q)$ with the relative Welschinger counts 
$W_{\beta, (\mu,\nu),\mathbf{x}}^{\widetilde{S}_n/\widetilde{\mathfrak{C}}}$.
We use the following notation: for every positive integer $k$, we set $[k]_\RR=1$ if $k$ is odd, and $[k]_\RR=2$ if $k$ is even.

\begin{theorem}
\label{thm: relative_BPS_W}
 Let $\beta \in H_2(\widetilde{S}_n,\ZZ)$ such that $H \cdot \beta \geq 1$, and $\mu$, $\nu$ as in \eqref{eq_mu_nu}.  
 There exists a generic configuration of real points $\mathbf{x}$ of type $(\beta,\mu,\nu)$, such that the specialization at $q=-1$ of the relative BPS polynomial $BPS_{\beta,(\mu,\nu)}^{\widetilde{S}_n/\widetilde{\mathfrak{C}}}(q)$ and the relative Welschinger counts of $W_{\beta, (\mu,\nu),\mathbf{x}}^{\widetilde{S}_n/\widetilde{\mathfrak{C}}}$ of real rational curves passing through $\mathbf{x}$ are related by:
    \[ BPS_{\beta, (\mu,\nu)} ^{\widetilde{S}_n/\widetilde{\mathfrak{C}}}(-1)=
    \left( \prod_{j=1}^{\ell(\nu)} \frac{v_j}{[\nu_j]_\RR}\right)
    W_{\beta, (\mu,\nu), \mathbf{x}}^{\widetilde{S}_n/\widetilde{\mathfrak{C}}} \]
\end{theorem}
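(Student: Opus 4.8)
The plan is to combine the floor-diagram description of the relative BPS polynomial from Corollary \ref{cor_bps_floor} with Brugall\'e's floor-diagram computation of relative Welschinger counts in \cite[Theorem 3.12]{BrugFloorconic}, comparing the two multiplicities of a marked floor diagram after specializing at $q=-1$. First I would apply Corollary \ref{cor_bps_floor} to write
\[ BPS_{\beta,(\mu,\nu)}^{\widetilde{S}_n/\widetilde{\mathfrak{C}}}(-1) = N_{\beta,(\mu,\nu)}^{\mathrm{floor}}(-1) = \sum_\Gamma m_\Gamma(-1)\,, \]
where the sum is over isomorphism classes of marked floor diagrams of class $\beta$ and type $(\mu,\nu)$, and $m_\Gamma(q) = \left(\prod_{j=1}^{\ell(\nu)}\nu_j\right)\prod_{e\in E(\Gamma)}[w_e]_q^2$ as in Definition \ref{def_q_mult}.

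The crux is the elementary evaluation of the $q$-integer at $q=-1$: writing $[w]_q = q^{-\frac{w-1}{2}}\sum_{j=0}^{w-1}q^j$ and specializing, one finds $[w]_{-1}=\pm 1$ if $w$ is odd and $[w]_{-1}=0$ if $w$ is even, so that $[w]_{-1}^2 = 1$ for $w$ odd and $[w]_{-1}^2 = 0$ for $w$ even. Hence the edge factor $\prod_{e}[w_e]_{-1}^2$ acts as an indicator selecting exactly those marked floor diagrams all of whose internal edges have odd weight, and equals $1$ on them, while the leg factor $\prod_j \nu_j$ is unaffected by the specialization. This gives the term-by-term value
\[ m_\Gamma(-1) = \begin{cases} \prod_{j=1}^{\ell(\nu)}\nu_j & \text{if every } e\in E(\Gamma) \text{ has odd weight},\\ 0 & \text{otherwise}. \end{cases} \]

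Next I would invoke \cite[Theorem 3.12]{BrugFloorconic}, which provides a generic configuration $\mathbf{x}$ of real points of type $(\beta,\mu,\nu)$ — adapted to the degeneration $\widetilde{\epsilon}$ of \S\ref{subsub: a degeneration}, so that the real floor-diagram correspondence and the genericity of \cite[Proposition 2.1]{SS} hold simultaneously — for which the relative Welschinger count is computed by the same marked floor diagrams weighted by Brugall\'e's real multiplicity $m_\Gamma^{\RR}$, that is $W_{\beta,(\mu,\nu),\mathbf{x}}^{\widetilde{S}_n/\widetilde{\mathfrak{C}}} = \sum_\Gamma m_\Gamma^{\RR}$. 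The internal edges contribute the same odd-weight indicator on both sides, whereas each free tangency leg of weight $\nu_j$ carries the complex factor $\nu_j$ in $m_\Gamma(q)$ but the real factor $[\nu_j]_\RR$ in $m_\Gamma^{\RR}$. One thus obtains, for every $\Gamma$, the identity
\[ m_\Gamma(-1) = \left(\prod_{j=1}^{\ell(\nu)}\frac{\nu_j}{[\nu_j]_\RR}\right) m_\Gamma^{\RR}\,, \]
and summing over $\Gamma$ yields the claimed equality (here the $v_j$ in the statement is read as $\nu_j$).

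The main obstacle is precisely this term-by-term matching of our refined multiplicity at $q=-1$ with Brugall\'e's real multiplicity, i.e. verifying that $m_\Gamma^{\RR} = \prod_{j}[\nu_j]_\RR$ for diagrams with all internal edges odd and $m_\Gamma^{\RR}=0$ otherwise. This requires unwinding the conventions of \cite[\S 3]{BrugFloorconic}: that an even-weight internal edge forces a vanishing real contribution (consistent with $[w_e]_{-1}^2 = 0$, with no surviving sign because the edge factor is squared), and that a free tangency point of order $\nu_j$ with the real conic $\widetilde{\mathfrak{C}}$ contributes $[\nu_j]_\RR$ — one real configuration when $\nu_j$ is odd, two when $\nu_j$ is even — in place of the complex factor $\nu_j$. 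The existential quantifier on $\mathbf{x}$ in the statement (rather than a universal one) is exactly what allows us to fix a configuration for which both the finiteness in \cite[Proposition 2.1]{SS} and the real correspondence in \cite[Theorem 3.12]{BrugFloorconic} are valid.
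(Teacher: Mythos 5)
Your proposal is correct and takes essentially the same route as the paper's proof: both combine Corollary \ref{cor_bps_floor} with \cite[Theorem 3.12]{BrugFloorconic}, evaluate the edge factors $[w_e]_q^2$ at $q=-1$ to get the odd-weight indicator, and match multiplicities diagram by diagram. The one verification you flag as the "main obstacle" is handled in the paper exactly as you anticipate, by specializing \cite[Definition 3.10]{BrugFloorconic} to the case $(s,\kappa)=(0,0)$ of purely real constraints and the standard real structure, which gives $m_{\Gamma,\RR}=0$ when some edge weight is even and $m_{\Gamma,\RR}=\prod_{j=1}^{\ell(\nu)}[\nu_j]_\RR$ otherwise (and you are right that $v_j$ in the statement is a typo for $\nu_j$).
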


\begin{proof}
By \cite[Theorem 3.12]{BrugFloorconic}, there exists a generic configuration of real points $\mathbf{x}$ of type $(\beta,\mu,\nu)$, such that the relative Welschinger counts of $W_{\beta, (\mu,\nu), \mathbf{x}}^{\widetilde{S}_n/\widetilde{\mathfrak{C}}}$ is given by 
\[ W_{\beta, (\mu,\nu), \mathbf{x}}^{\widetilde{S}_n/\widetilde{\mathfrak{C}}} = \sum_\Gamma m_{\Gamma,\RR}\,,\]
where the sum is over the marked floor diagrams of genus zero, class $\beta$, and type $(\mu,\nu)$.
The real multiplicity $m_{\Gamma,\RR}$ is determined as described in \cite[Definition 3.10]{BrugFloorconic}. 
Using the notation of \cite[Definition 3.10]{BrugFloorconic}, we only consider the case $(s,\kappa)=(0,0)$, corresponding to purely real constraints and to the standard real structure. This leads to the simple result that
$m_{\Gamma,\RR}=0$ if $\Gamma$ contains an edge of even weight, and $m_{\Gamma,\RR}=\prod_{j=1}^{\ell(\nu)} [\nu_j]_\RR$ else.

On the other hand, by Corollary \ref{cor_bps_floor}, we have $BPS_{\beta, (\mu,\nu)} ^{\widetilde{S}_n/\widetilde{\mathfrak{C}}}(-1)=N_{\beta, (\mu,\nu)}^{\mathrm{floor}}(-1)$. Applying Definition \ref{def_q_mult}, we then deduce 
\[BPS_{\beta, (\mu,\nu)} ^{\widetilde{S}_n/\widetilde{\mathfrak{C}}}(-1)= \sum_\Gamma m_\Gamma(-1) \,,\]
where $m_\Gamma(-1)=\left( \prod_{j=1}^{\ell(\nu)} \nu_j \right) \prod_{e \in E(\Gamma)} [w_e]^2_{q=-1}$.
By \eqref{eq_q_integer}, we have $[w_e]^2_{q=-1}=0$ if $w_e$ is even, and $[w_e]^2_{q=-1}=1$ if $w_e$ is odd. Therefore, for every marked floor diagram $\Gamma$, we obtain
\[ m_\Gamma(-1)= \left( \prod_{j=1}^{\ell(\nu)} \frac{v_j}{[\nu_j]_\RR}\right) m_{\Gamma, \RR}\,,\]
and so \[ BPS_{\beta, (\mu,\nu)} ^{\widetilde{S}_n/\widetilde{\mathfrak{C}}}(-1)=
  \left( \prod_{j=1}^{\ell(\nu)} \frac{v_j}{[\nu_j]_\RR}\right)
    W_{\beta, (\mu,\nu),\mathbf{x}}^{\widetilde{S}_n/\widetilde{\mathfrak{C}}}\,. \]
\end{proof}

\begin{figure}[hbt!]
\center{\includegraphics[height=3in,width=6in,angle=0]{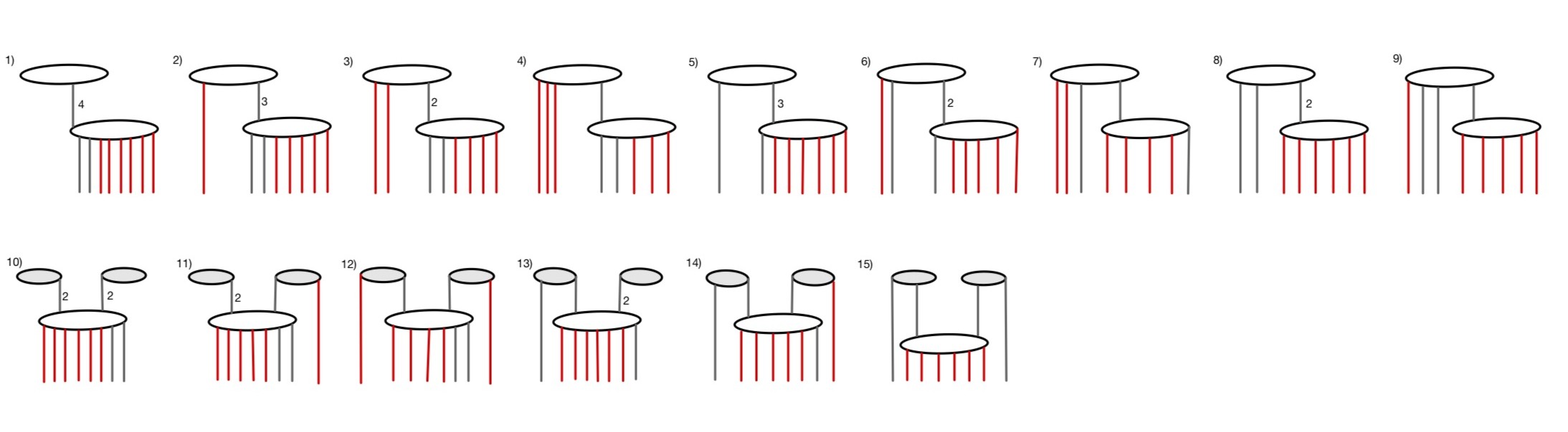}}
\caption{All floor diagrams of degree $4$ of class $\beta = 4H  - \sum_{i=1}^6 E_i \in H_2(\widetilde{S}_6,\ZZ)$ and type $(\emptyset, (1,1))$.}
\label{Fig7}
\end{figure}

\begin{table}[]
    \centering
    \begin{tabular}{l|l|l|l} 
    \hline
Diagram & $\CC$-count & $\RR$-count & Refined count  \\ 
\hline
$1)$ & $16$ & $0$ & $q^{-3} + 2 q^{-2} + 3 q^{-1} + 4 +3q + 2q^2 +q^3 $ \\
\hline
$2)$ & $54$ & $6$ & $6 q^{-2} + 12 q^{-1} + 18 +12 q + 6q^2 $ \\
\hline
$3)$ & $60$ & $0$ & $ 15 q^{-1} + 30 + 15 q $ \\
\hline
$4)$ & $20$ & $20$ & $20 $ \\
\hline
$5)$ & $36$ & $4$ & $4 q^{-2} + 8 q^{-1} + 12 +8 q + 4q^2 $ \\
\hline
$6)$ & $96$ & $0$ & $ 24 q^{-1} + 48 + 24 q $ \\
\hline
$7)$ & $60$ & $60$ & $ 60 $ \\
\hline
$8)$ & $24$ & $0$ & $ 6 q^{-1} + 12 + 6 q$ \\
\hline
$9)$ & $36$ & $36$ & $ 36$ \\
\hline
$10)$ & $16$ & $0$ & $ 6 q^{-1} + 12 + 6 q$ \\
\hline
$11)$ & $48$ & $0$ & $q^{-2} + 4 q^{-1} + 6 + 4 q + q^2$ \\
\hline
$12)$ & $30$ & $30$ & $ 30 $ \\
\hline
$13)$ & $40$ & $0$ & $ 10 q^{-1} + 20 + 10 q$ \\
\hline
$14)$ & $60$ & $60$ & $ 60 $ \\
\hline
$15)$ & $20$ & $20$ & $ 20 $ \\
\hline
  \end{tabular}
    \caption{The complex, real, and refined counts of floor diagrams in Figure \ref{Fig7}.}
    \label{Table1}
\end{table}

\subsection{Examples of relative BPS polynomials}
\label{sec_examples}

\begin{example} \label{ex_relative_4}
    Let $n=6$,  $\beta = 4H - \sum_{i=1}^6 E_i \in H_2(\widetilde{S}_6,\ZZ)$, and $(\mu, \nu)=(\emptyset, (1,1))$, so that $m_{\beta,(\mu,\nu)}=4-1+2=5$.
    We represent in Figures \ref{Fig7} all the genus zero floor diagrams of class $\beta$ and type $(\mu,\nu)$ -- these floor diagrams can also be found in \cite[Figure 3]{BrugFloorconic}, where the red legs are omitted. We list the complex, real, and refined contributions of each floor diagram in Table \ref{Table1}. In particular, summing all the refined counts, we obtain by Corollary \ref{cor_bps_floor} the corresponding BPS polynomial 
    \[BPS_{\beta, (\mu,\nu)}^{\widetilde{S}_6/\widetilde{\mathfrak{C}}}(q)=q^{-3}+13q^{-2}+94q^{-1}+400+94q+13q^2+q^3\,,\]
    interpolating between the complex count $BPS_{\beta, (\mu,\nu)} ^{\widetilde{S}_6/\widetilde{\mathfrak{C}}}(1) = GW_{0,\beta, (\mu,\nu)} ^{\widetilde{S}_6/\widetilde{\mathfrak{C}}}=616$ discussed in \cite[Example 3.8]{BrugFloorconic}, and the real count $BPS_{\beta, (\mu,\nu)} ^{\widetilde{S}_6/\widetilde{\mathfrak{C}}}(-1) = W_{\beta, (\mu,\nu),\mathbf{x}} ^{\widetilde{S}_6/\widetilde{\mathfrak{C}}}=236$ appearing in \cite[Table 1]{BrugFloorconic}.
\end{example}

\begin{example} \label{ex_relative_6}
Let $n=6$,  $\beta = 6H - 2\sum_{i=1}^6 E_i \in H_2(\widetilde{S}_6,\ZZ)$, and $(\mu, \nu)=(\emptyset, \emptyset)$, so that $m_{\beta,(\mu,\nu)}=6-1=5$.
We represent in Figures \ref{Fig7} all the genus zero floor diagrams of class $\beta$ and type $(\mu,\nu)$ -- these floor diagrams can also be found in 
\cite[Figures 4-5]{BrugFloorconic}, where the red legs are omitted. We list the complex, real, and refined contributions of each floor diagram in Table \ref{Table1}. In particular, summing all the refined counts, we obtain by Corollary \ref{cor_bps_floor} the corresponding BPS polynomial 
    \[BPS_{\beta, (\mu,\nu)} ^{\widetilde{S}_6/\widetilde{\mathfrak{C}}}(q)=q^{-4}+11q^{-3}+74q^{-2}+359q^{-1}+1112+359q
+74q^2+11q^3+q^4\,,\]
    interpolating between the complex count $BPS_{\beta, (\mu,\nu)} ^{\widetilde{S}_6/\widetilde{\mathfrak{C}}}(1) = GW_{0,\beta, (\mu,\nu)} ^{\widetilde{S}_6/\widetilde{\mathfrak{C}}}=2002$ discussed in \cite[Example 3.8]{BrugFloorconic}, and the real count $BPS_{\beta, (\mu,\nu)} ^{\widetilde{S}_6/\widetilde{\mathfrak{C}}}(-1) = W_{\beta, (\mu,\nu),\mathbf{x}} ^{\widetilde{S}_6/\widetilde{\mathfrak{C}}}=522$ appearing in \cite[Table 2]{BrugFloorconic}.
\end{example}

\begin{figure}[htb]
\begin{center}
\includegraphics[height=4in,width=6in,angle=0]{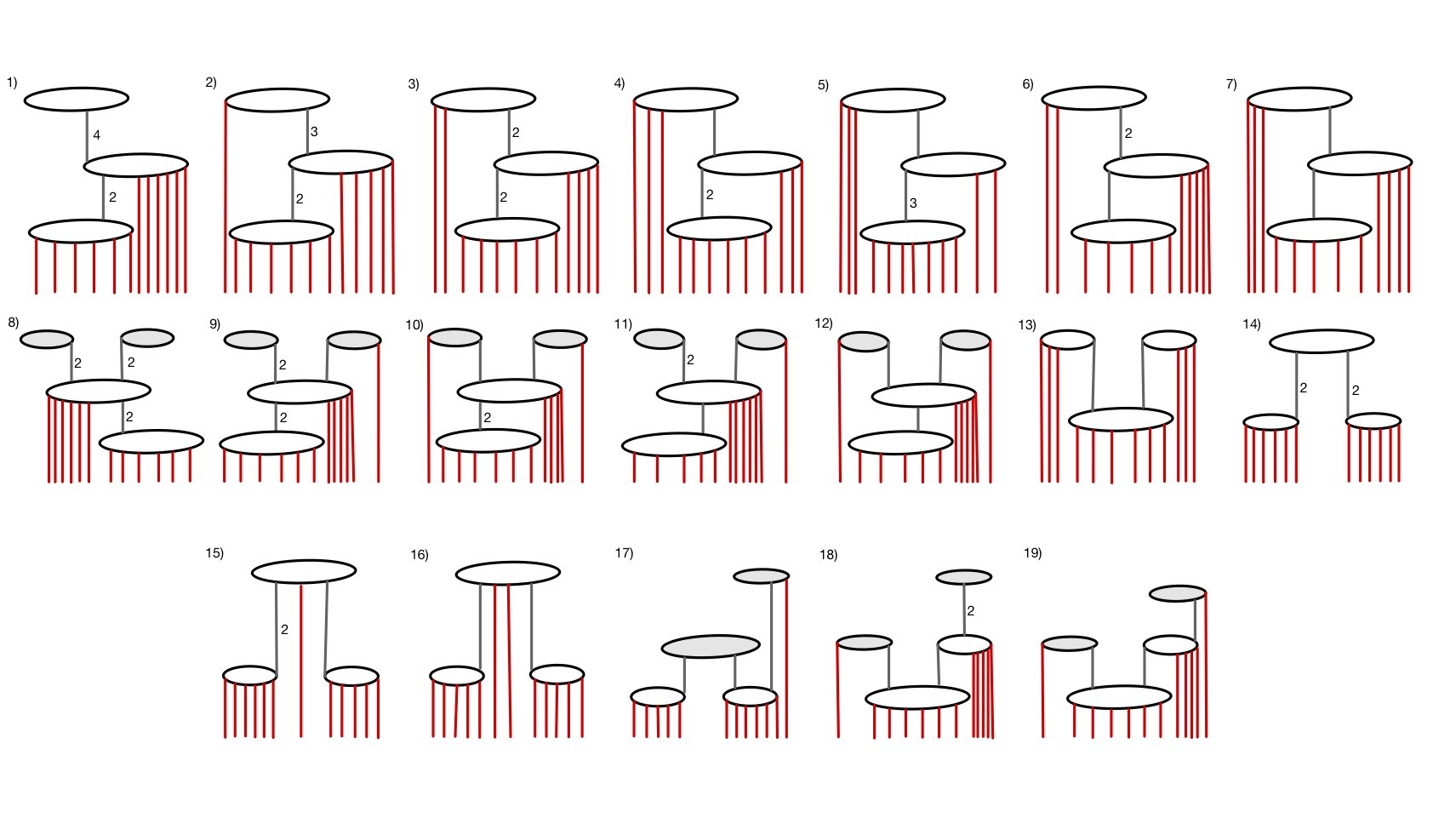}
\caption{All floor diagrams of degree $6$ of class $\beta = 6 H - 2\sum_{i=1}^6 E_i \in H_2(\widetilde{S}_6,\ZZ)$ and type $(\emptyset, \emptyset)$.}
\label{Fig8}
\end{center}
\end{figure}

\begin{table}[]
    \centering
    \begin{tabular}{l|l|l|l} 
    \hline
Diagram & $\CC$-count & $\RR$-count & Refined count  \\ 
\hline
$1)$ & $64$ & $0$ & $q^{-4} + 4 q^{-3} + 8 q^{-2} + 12 q^{-1} + 14 +12q + 8q^2 + 4 q^3 + q^4 $ \\ \hline
$2)$ & $216$ & $0$ & $6 q^{-3} + 24 q^{-2} + 48 q^{-1} + 60 +48 q + 24 q^2 + 6q^3 $ \\
\hline
$3)$ & $240$ & $0$ & $ 15 q^{-2} + 60 q^{-1} + 90 + 60 q + 15 q^2 $ \\
\hline
$4)$ & $80$ & $20$ & $20q^{-1} + 40 + 20q  $ \\
\hline
$5)$ & $54$ & $6$ & $6 q^{-2} + 12 q^{-1} + 18 +12 q + 6q^2 $ \\
\hline
$6)$ & $120$ & $0$ & $ 30 q^{-1} + 60 + 30 q $ \\
\hline
$7)$ & $60$ & $60$ & $ 60 $ \\
\hline
$8)$ & $64$ & $0$ & $q^{-3} + 6q^{-2} + 15 q^{-1} + 20 + 15 q + 6 q^2 + q^3$ \\
\hline
$9)$ & $192$ & $0$ & $ 12q^{-2} + 48 q^{-1} + 72 + 48 q + 12 q^2$ \\
\hline
$10)$ & $120$ & $0$ & $ 30 q^{-1} + 60 + 30 q$ \\
\hline
$11)$ & $48$ & $0$ & $ 12 q^{-1} + 24 + 12 q $ \\
\hline
$12)$ & $66$ & $66$ & $ 66 $ \\
\hline
$13)$ & $60$ & $60$ & $ 60 $ \\
\hline
$14)$ & $48$ & $0$ & $ 3q^{-2} + 12 q^{-1} + 18 + 12q + 3 q^2 $ \\
\hline
$15)$ & $144$ & $0$ & $ 36q^{-1} + 72 + 36q $ \\
\hline
$16)$ & $90$ & $90$ & $ 90 $ \\
\hline
$17)$ & $120$ & $120$ & $ 120 $ \\
\hline
$18)$ & $96$ & $0$ & $ 24q^{-1} + 48 + 24 q $ \\
\hline
$19)$ & $120$ & $120$ & $ 120 $ \\
\hline
  \end{tabular}
    \caption{The complex, real, and refined counts of floor diagrams in Figure \ref{Fig8}.}
    \label{Table2}
\end{table}

\section{BPS polynomials and Welschinger invariants of del Pezzo surfaces}
\label{section_bps_welschinger}

In \S \ref{sec_ABV}, we prove a refined version of the 
Abramovich--Bertram--Vakil formula which relates the BPS polynomials of the cubic surface $S_6$ with the relative BPS polynomials of $(\widetilde{S}_6, \widetilde{\mathfrak{C}})$. We use this result to prove Theorem \ref{thm_main} in 
\S \ref{sec_thm_main}, showing that the specialization at $q=-1$ of the BPS polynomials of the surfaces $S_n$ with $n\leq 6$ coincides with the Welschinger invariants.

\subsection{Refined Abramovich--Bertram--Vakil formula}
\label{sec_ABV}

In this section, we study the BPS polynomials of the surface $S_6$ obtained from $\PP^2$ by blowing-up $6$ points in general position, that is, of a smooth cubic surface in $\PP^3$. Recall from \S\ref{sec:relative_bps} that we denote by 
$\widetilde{S}_6$ the blow-up of $\PP^2$ at 6 points lying on a smooth conic, and by $\widetilde{\mathfrak{C}}$ the strict transform of the conic, so that, $\widetilde{\mathfrak{C}}^2
=2^2-6=-2$. In \cite[\S 9.2]{vakil2000counting}, Vakil studies the Gromov--Witten invariants of $S_6$ by degeneration to $(\widetilde{S}_6, \widetilde{\mathfrak{C}}^2)$, generalizing a previous formula of Abramovich-Bertram \cite{AB_12} which relates Gromov--Witten invariants of $\mathbb{F}_0$ and $\mathbb{F}_2$. The following result is a refined version of the Abramovich--Bertram--Vakil formula for $S_6$.

\begin{theorem} \label{thm_refined_ABV}
    For every $\beta \in H_2(S_6,\ZZ)$ such that $\beta \cdot \widetilde{\mathfrak{C}} \geq 1$ and for every $g \geq 0$, we have 
    \[ GW_{g,\beta}^{S_6}
    =\sum_{k \geq 0} \binom{\beta \cdot \widetilde{\mathfrak{C}}+2k}{k}
GW^{\widetilde{S}_6/\widetilde{\mathfrak{C}}}_{g,\beta-k \widetilde{\mathfrak{C}}, (\emptyset, \nu_k)}\,,\]
    where $\nu_k$ is the partition consisting of $(\beta \cdot \widetilde{\mathfrak{C}}+2k)$ parts equal to $1$. In particular, we have 
     \[ BPS_{\beta}^{S_6}(q)
    =\sum_{k \geq 0} \binom{\beta \cdot \widetilde{\mathfrak{C}}+2k}{k}
    BPS^{\widetilde{S}_6/\widetilde{\mathfrak{C}}}_{\beta-k \widetilde{\mathfrak{C}}, (\emptyset, \nu_k)}(q)\,.\]
\end{theorem}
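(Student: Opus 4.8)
The plan is to deduce the Gromov--Witten identity by combining deformation invariance with a degeneration to the normal cone, following Vakil's genus-zero argument \cite{vakil2000counting} (a generalization of Abramovich--Bertram \cite{AB_12}) while carrying the $(-1)^g\lambda_g$ insertion through the degeneration formula; the BPS identity will then be a formal consequence. First I would record that $S_6$ and $\widetilde{S}_6$ are the general and special fibers of the smooth family of surfaces obtained by blowing up $6$ points of $\PP^2$ as they move from general position onto the conic $\mathfrak{C}$. Since the $\lambda_g$-twisted invariants of \eqref{eq_gw_S} are deformation invariant, and the identification $H_2(S_6,\ZZ)\cong H_2(\widetilde{S}_6,\ZZ)=\ZZ H\oplus\bigoplus_i \ZZ E_i$ carries $\beta$ to $\beta$ and $2H-\sum_i E_i$ to $\widetilde{\mathfrak{C}}$, I obtain $GW_{g,\beta}^{S_6}=GW_{g,\beta}^{\widetilde{S}_6}$ for all $g$. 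This reduces the problem to computing the absolute invariant $GW_{g,\beta}^{\widetilde{S}_6}$ in terms of the relative theory of $(\widetilde{S}_6,\widetilde{\mathfrak{C}})$.

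Next I would degenerate $\widetilde{S}_6$ to the normal cone of $\widetilde{\mathfrak{C}}$. As $\widetilde{\mathfrak{C}}\cong\PP^1$ with $N_{\widetilde{\mathfrak{C}}/\widetilde{S}_6}=\cO_{\PP^1}(-2)$, the central fiber is $\widetilde{S}_6\cup_{\widetilde{\mathfrak{C}}}\PP(\cO\oplus\cO(-2))=\widetilde{S}_6\cup_{\widetilde{\mathfrak{C}}}\mathbb{F}_2$, glued along a section, and I specialize all $m_\beta$ point constraints into the interior of the $\widetilde{S}_6$ component. Applying the decomposition/degeneration formula \cite{ACGS_decomposition, li2002degeneration} in the $\lambda_g$-twisted theory expresses $GW_{g,\beta}^{\widetilde{S}_6}$ as a sum over splittings of the genus, of the class $\beta$, and over matching tangency profiles along $\widetilde{\mathfrak{C}}$. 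The essential simplification comes from the $\lambda_g$-vanishing already exploited in Lemmas \ref{lem_gw_11} and \ref{lem_gw_2} (via $\lambda_{g'}^2=0$, cf.\ \cite[Lemma 14]{Bou2019}): any contribution in which the dual graph carries a loop, or in which positive genus sits on the $\mathbb{F}_2$ component, is killed. Only tree-like configurations survive, in which the whole genus $g$ and all point constraints lie on the $\widetilde{S}_6$ side, whose contribution is precisely the relative invariant $GW^{\widetilde{S}_6/\widetilde{\mathfrak{C}}}_{g,\beta-k\widetilde{\mathfrak{C}},(\emptyset,\nu)}$ (the section degree of the $\mathbb{F}_2$-side curve being $k$, so its push-forward is $k\widetilde{\mathfrak{C}}$), while the $\mathbb{F}_2$ side is a genus-$0$, rigid, unconstrained relative map.

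The crux is then the combinatorial evaluation of the $\mathbb{F}_2$-side factor, which is exactly Vakil's genus-zero computation \cite[\S9.2]{vakil2000counting}: rigidity and the absence of point constraints force the tangency profile to be $\nu=\nu_k$, the partition with $(\beta-k\widetilde{\mathfrak{C}})\cdot\widetilde{\mathfrak{C}}=\beta\cdot\widetilde{\mathfrak{C}}+2k$ parts equal to $1$, and the weighted count of admissible section-degree-$k$ configurations in $\mathbb{F}_2$ meeting the $\beta\cdot\widetilde{\mathfrak{C}}+2k$ intersection points collapses to $\binom{\beta\cdot\widetilde{\mathfrak{C}}+2k}{k}$. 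I expect this bookkeeping --- checking that the higher-genus $\mathbb{F}_2$-side contributions reduce to their genus-zero value and that the weighted count is exactly the binomial coefficient --- to be the main obstacle, the rest being formal. Since $\beta-k\widetilde{\mathfrak{C}}$ fails to be effective for large $k$, the sum is finite.

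Finally, for the BPS identity I would observe that the transform \eqref{eq_relative_bps_inv}--\eqref{eq_relative_bps_polyn} producing $BPS^{\widetilde{S}_6/\widetilde{\mathfrak{C}}}_{\beta-k\widetilde{\mathfrak{C}},(\emptyset,\nu_k)}(q)$ from its relative Gromov--Witten series coincides, for the type $(\emptyset,\nu_k)$ with all parts equal to $1$, with the transform \eqref{eq_bps_S} producing $BPS_\beta^{S_6}(q)$ from its absolute series: the product $\prod_j \tfrac{1}{\nu_j}2\sin(\nu_j u/2)$ equals $(2\sin(u/2))^{\ell(\nu_k)}$, and $H\cdot(\beta-k\widetilde{\mathfrak{C}})+\ell(\nu_k)=c_1(S_6)\cdot\beta=m_\beta+1$, so both sides carry the overall prefactor $(2\sin(u/2))^{1-m_\beta}$ together with the same power $u^{2g-1+m_\beta}$. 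Applying this common linear transform to the Gromov--Witten identity term by term yields $BPS_\beta^{S_6}(q)=\sum_{k\geq 0}\binom{\beta\cdot\widetilde{\mathfrak{C}}+2k}{k}BPS^{\widetilde{S}_6/\widetilde{\mathfrak{C}}}_{\beta-k\widetilde{\mathfrak{C}},(\emptyset,\nu_k)}(q)$, as claimed.
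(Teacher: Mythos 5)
Your proposal is correct and is essentially the paper's own approach: the paper's proof consists of citing the refined Abramovich--Bertram formula for $(\mathbb{F}_0,\mathbb{F}_2)$ from \cite[Theorem 8.3]{bousseau_floor} and asserting that the analogous degeneration argument applies to $S_6$ and $(\widetilde{S}_6,\widetilde{\mathfrak{C}})$ --- precisely the argument you spell out (deformation invariance from $S_6$ to $\widetilde{S}_6$, degeneration to the normal cone of $\widetilde{\mathfrak{C}}$, $\lambda_g$-vanishing forcing genus zero and transverse contact on the bubble component, and the genus-zero binomial combinatorics going back to Vakil). Your final check that the transforms \eqref{eq_relative_bps_inv}--\eqref{eq_relative_bps_polyn} and \eqref{eq_bps_S} coincide for $\nu_k=(1,\dots,1)$, via $H\cdot(\beta-k\widetilde{\mathfrak{C}})+\ell(\nu_k)=m_\beta+1$, correctly supplies the ``in particular'' step that the paper leaves implicit.
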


\begin{proof}
The refined version of the original Abramovich-Bertram formula relating Gromov--Witten invariants of $\mathbb{F}_0$ and $\mathbb{F}_2$ is proved in \cite[Theorem 8.3]{bousseau_floor}. An analogous degeneration argument applied to $S_6$ and $(\widetilde{S}_6, \widetilde{\mathfrak{C}}^2)$ proves Theorem \ref{thm_refined_ABV}.
\end{proof}

\subsection{Welschinger invariants from BPS polynomials at $q=-1$}
\label{sec_thm_main}
Recall that we denote by $S_n$ a surface obtained from $\PP^2$ by blowing-up $n$ general real points.
Conjecture \ref{conj_bps_w} states that, for any $n$, the specialization at $q=-1$ of the BPS polynomials of $S_n$ is equal to the Welschinger invariants of $S_n$. Below we prove that this is true for $n \leq 6$.

\begin{theorem} \label{thm_main}
Let $S_n$ be a blow-up of $\PP^2$ at $n$ general real points. Then, for every $n \leq 6$ and $\beta \in H_2(S_n, \ZZ)$ such that $m_{\beta}:= -1 + c_1(S_n) \cdot \beta \geq 0$, the following holds:
     \[ BPS_{\beta}^{S_n}(-1)=W_{\beta}^{S_n} \,,\]
that is, Conjecture \ref{conj_bps_w} holds for $n \leq 6$.
\end{theorem}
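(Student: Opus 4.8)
The plan is to reduce the statement to the single surface $S_6$ and then run the $q=-1$ specialization of the refined Abramovich--Bertram--Vakil formula of Theorem \ref{thm_refined_ABV} in parallel with its real counterpart. First I would clear the toric range $n\le 3$: here $S_n$ is a toric del Pezzo surface, so Theorem \ref{thm_absolute_relative} gives $BPS_\beta^{S_n}(q)=BG_\beta^{S_n}(q)$, and \eqref{eq_BG_interpolation} yields $BPS_\beta^{S_n}(-1)=BG_\beta^{S_n}(-1)=W_\beta^{S_n}$. For $n=4,5$ I would reduce to $n=6$. Writing $\pi\colon S_6\to S_n$ for the composition of the blow-ups at the points indexed $n+1,\dots,6$, we have $\pi^\star\beta\cdot E_j=0$ for $j>n$ and $m_{\pi^\star\beta}=m_\beta$. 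Lemma \ref{lem_blow_up} gives $BPS_{\pi^\star\beta}^{S_6}(q)=BPS_\beta^{S_n}(q)$, while on the real side every real rational curve of class $\pi^\star\beta$ through $m_\beta$ general real points is the strict transform of such a curve of class $\beta$ avoiding the extra points, with the same nodes and hence the same Welschinger sign, so $W_{\pi^\star\beta}^{S_6}=W_\beta^{S_n}$. Thus it suffices to treat $n=6$.

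For $n=6$ and a class with $\beta\cdot\widetilde{\mathfrak{C}}\ge 1$, I would specialize Theorem \ref{thm_refined_ABV} at $q=-1$,
\[ BPS_\beta^{S_6}(-1)=\sum_{k\ge 0}\binom{\beta\cdot\widetilde{\mathfrak{C}}+2k}{k}\,BPS^{\widetilde{S}_6/\widetilde{\mathfrak{C}}}_{\beta-k\widetilde{\mathfrak{C}},(\emptyset,\nu_k)}(-1), \]
where $\nu_k$ is the partition with $\beta\cdot\widetilde{\mathfrak{C}}+2k$ parts equal to $1$. Since every part of $\nu_k$ is odd, the prefactor $\prod_j \nu_j/[\nu_j]_\RR$ of Theorem \ref{thm: relative_BPS_W} equals $1$, so that theorem identifies each summand with a relative Welschinger count and gives
\[ BPS_\beta^{S_6}(-1)=\sum_{k\ge 0}\binom{\beta\cdot\widetilde{\mathfrak{C}}+2k}{k}\,W^{\widetilde{S}_6/\widetilde{\mathfrak{C}}}_{\beta-k\widetilde{\mathfrak{C}},(\emptyset,\nu_k),\mathbf{x}}. \]
I would then invoke the real Abramovich--Bertram--Vakil formula of \cite{BP1,BP2,IKS2}: for a purely real configuration all $\beta\cdot\widetilde{\mathfrak{C}}+2k$ intersection points with the conic are real, so the combinatorial gluing factor is again the binomial $\binom{\beta\cdot\widetilde{\mathfrak{C}}+2k}{k}$, and the formula reads $W_\beta^{S_6}=\sum_{k}\binom{\beta\cdot\widetilde{\mathfrak{C}}+2k}{k}\,W^{\widetilde{S}_6/\widetilde{\mathfrak{C}}}_{\beta-k\widetilde{\mathfrak{C}},(\emptyset,\nu_k),\mathbf{x}}$ for the same configuration $\mathbf{x}$. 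Comparing the two displays term by term gives $BPS_\beta^{S_6}(-1)=W_\beta^{S_6}$.

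The classes with $\beta\cdot\widetilde{\mathfrak{C}}\le 0$ I would reduce to the above by the $W(E_6)$-action. The class $\widetilde{\mathfrak{C}}=2H-\sum_{i=1}^6 E_i$ is a $(-2)$-root, and $W(E_6)$ acts transitively on roots while preserving $GW$, $BPS$, and (by the real invariance underlying \cite{IKS2}) $W$; hence any $\beta$ not proportional to $c_1$ can be reflected to a class pairing positively with $\widetilde{\mathfrak{C}}$, reducing to the case $\beta\cdot\widetilde{\mathfrak{C}}\ge 1$. The only remaining classes are the anticanonical multiples $\beta=mc_1$, for which $\beta\cdot\widetilde{\mathfrak{C}}=0$; here the same degeneration argument proving Theorem \ref{thm_refined_ABV} applies, the sum now starting with the $k$ for which $\nu_k$ is nonempty, as illustrated by $\beta=2c_1$ in Example \ref{example_3240_refined} (whose constant term $1918$ is recovered as $1112+2\cdot 400+6\cdot 1$ from Examples \ref{ex_relative_6}, \ref{ex_relative_4}, and the conic contribution).

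The hard part will be the alignment of the real formula with the refined complex one. One must verify that the real Abramovich--Bertram--Vakil formula can be realized through the very conic degeneration and the purely real configuration $\mathbf{x}$ produced in Theorem \ref{thm: relative_BPS_W}, so that the configuration-dependent relative Welschinger counts $W^{\widetilde{S}_6/\widetilde{\mathfrak{C}}}_{\beta-k\widetilde{\mathfrak{C}},(\emptyset,\nu_k),\mathbf{x}}$ genuinely coincide on both sides; the fact that each $\nu_k$ is purely odd is exactly what kills the even-tangency corrections in Theorem \ref{thm: relative_BPS_W} and forces the two combinatorial factors to be literally equal binomials. A secondary delicate point is confirming the required $W(E_6)$-invariance of the Welschinger counts in the real setting used to dispatch the classes with $\beta\cdot\widetilde{\mathfrak{C}}<0$.
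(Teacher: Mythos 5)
Your core argument is exactly the paper's: specialize the refined Abramovich--Bertram--Vakil formula of Theorem \ref{thm_refined_ABV} at $q=-1$, identify each relative BPS summand with a relative Welschinger count via Theorem \ref{thm: relative_BPS_W} (with prefactor $1$ because every part of $\nu_k$ equals $1$), match against the real Abramovich--Bertram--Vakil formula of \cite{BP1,BP2,IKS2}, and reduce $n<6$ to $n=6$ by blow-up invariance (Lemma \ref{lem_blow_up}, plus the blow-up invariance of Welschinger counts, which the paper leaves implicit and you spell out). The differences are at the edges. Your separate toric treatment of $n\le 3$ via Theorem \ref{thm_absolute_relative} and \eqref{eq_BG_interpolation} is sound but redundant, since the blow-up reduction covers it. Your flagged ``hard part'' (matching configurations) is resolved in the paper by the form of the cited real formula: \eqref{eq_real_ABV} holds for \emph{every} generic configuration $\mathbf{x}_k$ of type $(\beta-k\widetilde{\mathfrak{C}},\emptyset,\nu_k)$, chosen independently for each $k$, so one simply takes the configurations furnished by Theorem \ref{thm: relative_BPS_W}; no further alignment argument is needed. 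Your $W(E_6)$ detour for classes with $\beta\cdot\widetilde{\mathfrak{C}}\le 0$ is a genuine addition: the paper applies Theorem \ref{thm_refined_ABV} without comment even though that theorem is stated only for $\beta\cdot\widetilde{\mathfrak{C}}\ge 1$, and such classes do occur (e.g.\ $2H-E_1-\cdots-E_5$ has pairing $-1$, and $H-E_1-E_2$ has pairing $0$ without being an anticanonical multiple --- so your Weyl reduction, not just the $\beta=mc_1$ case, genuinely carries weight). Your fix works but imports an extra input, the $W(E_6)$-invariance of Welschinger invariants (true, via real Cremona transformations, cf.\ \cite{brugalle2021invariance, IKS2}), which you rightly flag; the lighter fix, implicitly the paper's, is that the degeneration argument behind Theorem \ref{thm_refined_ABV} extends verbatim with the sum running over those $k$ for which $\beta\cdot\widetilde{\mathfrak{C}}+2k\ge 0$, exactly as already used at $\beta\cdot\widetilde{\mathfrak{C}}=0$ in Example \ref{example_3240_refined}. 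One small slip: for $\beta=mc_1$ the sum starts at $k=0$ with $\nu_0=\emptyset$, not at the first $k$ with $\nu_k$ nonempty --- your own arithmetic $1918=1112+2\cdot 400+6$ from Examples \ref{ex_relative_6} and \ref{ex_relative_4} includes that $k=0$ term.
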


\begin{proof}
By the refined Abramovich--Bertram--Vakil formula of Theorem \ref{thm_refined_ABV}, we have
\begin{equation} \label{eq_ABV}
BPS_{\beta}^{S_6}(q)
    =\sum_{k \geq 0} \binom{\beta \cdot \widetilde{\mathfrak{C}}+2k}{k}
    BPS^{\widetilde{S}_6/\widetilde{\mathfrak{C}}}_{\beta-k \widetilde{\mathfrak{C}}, (\emptyset, \nu_k)}(q)\,.\end{equation}
On the other hand, by the real Abramovich--Bertram--Vakil formula, proved symplectically in \cite[Theorem 2.2]{BP1}, \cite[Theorem 7]{BP2}, and algebraically in \cite[\S 4]{IKS2}, for every generic configuration of real points $\mathbf{x}_k$ of type $(\beta -k \widetilde{\mathfrak{C}}, \emptyset, \nu_k)$,
we have \begin{equation} \label{eq_real_ABV}
W_{\beta}^{S_6}
    =\sum_{k \geq 0} \binom{\beta \cdot \widetilde{\mathfrak{C}}+2k}{k}
    W^{\widetilde{S}_6/\widetilde{\mathfrak{C}}}_{\beta-k \widetilde{\mathfrak{C}}, (\emptyset, \nu_k), \mathbf{x}_k}\,.\end{equation}
For every $k \in\ZZ_{\geq 0}$, Theorem \ref{thm: relative_BPS_W} ensures the existence of $\mathbf{x}_k$ such that, given that all parts of $\nu_k$ are equal to one, the following holds:
\begin{equation} \label{eq_relative}BPS^{\widetilde{S}_6/\widetilde{\mathfrak{C}}}_{\beta-k \widetilde{\mathfrak{C}}, (\emptyset, \nu_k)}(-1)=W^{\widetilde{S}_6/\widetilde{\mathfrak{C}}}_{\beta-k \widetilde{\mathfrak{C}}, (\emptyset, \nu_k), \mathbf{x}_k}\,.\end{equation}
The result for $S_6$ follows by combination of \eqref{eq_ABV}, \eqref{eq_real_ABV}, and \eqref{eq_relative}. This implies the result for all $n \leq 6$ by Lemma \ref{lem_blow_up}.
\end{proof}

\begin{example} \label{example_3240_refined}
Consider $\beta=2c_1(S_6)=6H-2\sum_{i=1}^6 E_i \in H_2(S_6,\ZZ)$.
We have $\beta \cdot \widetilde{\mathfrak{C}}=0$, and so,
by Theorem \ref{thm_main}, 
we obtain 
\[ BPS_\beta^{S_6}(q)=BPS_\beta^{\widetilde{S}_6/\widetilde{\mathfrak{C}}}(q) + 
2\, BPS_{\beta'}^{\widetilde{S}_6/\widetilde{\mathfrak{C}}}(q)+6 \,, \]
where $\beta'=\beta-\widetilde{\mathfrak{C}}
=4H-\sum_{i=1}^6 E_i$.
The relative BPS polynomials $BPS_\beta^{\widetilde{S}_6/\widetilde{\mathfrak{C}}}(q)$ and $BPS_{\beta'}^{\widetilde{S}_6/\widetilde{\mathfrak{C}}}(q)$ are calculated in Examples \ref{ex_relative_6} and \ref{ex_relative_4} respectively. Using these results, we obtain
\[ BPS_\beta^S(q)=q^{-4}+13q^{-3}+100 q^{-2}+547 q^{-1}+1918+547 q+100 q^2+13 q^3+q^4 \,,\]
interpolating between $GW_{0,\beta}^{S_6}=3240$
at $q=1$ (see Example \ref{ex_3240_complex}) and $W_\beta^{S_6}=1000$ at $q=-1$ (see Example \ref{ex_3240_real}).
\end{example}

\section{BPS polynomials and K-theoretic refined BPS invariants}
\label{section_refined_dt}

Under suitable positivity assumptions, the Block-G\"ottsche polynomials of toric surfaces are conjectured to be related to polynomials defined in terms of Hirzebruch genera of relative Hilbert schemes of points on universal curves over linear systems \cite[Conjecture 6.12]{GSrefined}. In this section, we formulate a version of this conjecture in the broader context of BPS polynomials of surfaces defined in \S\ref{section_bps_surfaces}.

\subsection{BPS polynomials and K-theoretic refined BPS invariants}

As in \S\ref{section_bps_surfaces}, let $S$ be a smooth projective surface, and $\beta \in H_2(S,\ZZ)$ such that $m_\beta:=-1+c_1(S)\cdot \beta \geq 0$. 
The associated BPS polynomial $BPS_\beta^S(q)\in \ZZ[q^\pm]$ is defined in Definition \ref{Defn:BPSS} in terms of higher genus Gromov--Witten theory of the 3-fold $S \times \PP^1$, or equivalently the $\CC^\star$-equivariant higher genus Gromov--Witten theory of the 3-fold $S \times \CC$.

On the other hand, let $K_S$ be the non-compact Calabi--Yau 3-fold obtained by considering the total space of the canonical line bundle of $S$. There is a natural $\CC^\star$-action on $K_S$ scaling the fibers of the projection $K_S \rightarrow S$. The $K$-theoretic refined genus $0$ BPS invariant $\Omega_\beta^{K_S}(q) \in \ZZ[q^{\pm}]$ of $K_S$ with $m_\beta$ point insertions is given by
\[ \Omega_\beta^{K_S}(q)=\chi_q(M_{\beta,1}^{K_S}, \hat\cO^\vir \otimes \bigotimes_{i=1}^{m_\beta} \tau(p_i)) \,,\]
where $M_{\beta,1}^{K_S}$ is the moduli space of stable one-dimensional sheaves on $S$ of class $\beta$ and Euler characteristic one, $\hat\cO^\vir$ is the corresponding Nekrasov-Okounkov twisted virtual structure sheaf, and $\tau(p_i)$ are tautological classes imposing the $m_\beta$ point constraints. Finally, $\chi_q$ is the $\CC^\star$-equivariant Euler characteristic with respect to the $\CC^\star$-action on $K_S$, and we denote by $q$ the equivariant parameter. We refer to \cite{NO, Thomas_K_VW, thomas2024refined} for the details of the definition,  and in particular to \cite{afgani2020refinements} for a discussion of point constraints.
Alternatively, one could use moduli spaces of stable pairs on $K_S$, with the conjectural expectation that they yield the same invariants. 

\begin{conjecture} \label{conj_DT}
    Let $S$ be a smooth projective surface, and $\beta \in H_2(S,\ZZ)$ such that $m_\beta:=-1+c_1(S)\cdot \beta \geq 0$. Then, the BPS polynomial $BPS_\beta^S(q)$ of $S$ coincides with the refined genus $0$ BPS invariant of $K_S$ with $m_\beta$ point insertions:
    \[ BPS_\beta^S(q) = \Omega_\beta^{K_S}(q)\,. \]
\end{conjecture}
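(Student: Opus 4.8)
The statement is a conjecture, so what I can offer is a strategy rather than a complete argument; the natural plan is to first collapse the higher genus Gromov--Witten theory on the left into a sheaf-counting problem, and then to match that problem with the refined theory on $K_S$ through a common cohomological model. The first step is to grant the Gromov--Witten/pairs correspondence, so that Lemma \ref{lemma_stable_pairs} rewrites the left-hand side in terms of the Pandharipande--Thomas theory of $S \times \PP^1$:
\[ \sum_{\chi \in \ZZ} PT_{\beta,\chi,\gamma_\beta}^{S \times \PP^1}(-q)^{\chi} = -q(1-q)^{m_\beta-1}BPS_\beta^S(q). \]
This is decisive because it transfers the problem from higher genus curves to one-dimensional objects and, simultaneously, identifies the formal variable $q=e^{iu}$ of the Gromov--Witten side with the variable tracking the holomorphic Euler characteristic $\chi$ of stable pairs. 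After this reduction, Conjecture \ref{conj_DT} becomes a statement entirely within sheaf theory: it asks that the $\chi$-graded stable pairs theory of $S \times \PP^1$ in the fiber classes $(\beta,0)$, with the prescribed insertions $\gamma_\beta$, reproduce the $\CC^\star$-equivariant $K$-theoretic refined invariant $\Omega_\beta^{K_S}(q)$ of the local Calabi--Yau threefold $K_S$.

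The second step is to express both sheaf-counting quantities through the same geometric object, namely the moduli space $M_{\beta,1}^{K_S}$, which is identified with the moduli of stable one-dimensional sheaves on $S$ of class $\beta$ and Euler characteristic one, together with its support morphism to the linear system $|\beta|$. On the $K_S$ side this is essentially built in: the Nekrasov--Okounkov twisted structure sheaf $\hat\cO^\vir$, together with the fiber-scaling $\CC^\star$-action, computes by equivariant localization a $q$-graded Euler characteristic of $M_{\beta,1}^{K_S}$ with the tautological point classes $\tau(p_i)$ inserted, and the refined Vafa--Witten perspective of Thomas \cite{Thomas_K_VW, thomas2024refined} predicts that this $q$-grading is the Lefschetz $\mathfrak{sl}_2$ weight on the intersection cohomology of $M_{\beta,1}^{K_S}$. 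On the stable pairs side, the same cohomology should control the genus grading through a Maulik--Toda style cohomological Gopakumar--Vafa description: the genus $g$ BPS contribution is read off from the perverse Leray filtration of the pushforward along the support map, and the factor $(q^{\frac{1}{2}}-q^{-\frac{1}{2}})^{2g}$ appearing in $BPS_\beta^S(q)$ matches exactly the $g$-th graded piece of the relative hard Lefschetz $\mathfrak{sl}_2$ action. Matching the two descriptions then amounts to checking that these two refinements of the cohomology of $M_{\beta,1}^{K_S}$ coincide and that the point insertions $\gamma_\beta$ correspond to the classes $\tau(p_i)$.

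At the structural level, the cleanest way to organize this matching is the route the paper itself suggests: to realize both theories as specializations of a single $(\CC^\star)^2$-equivariant enumerative theory of the Calabi--Yau fivefold $K_S \times \CC^2$, in the spirit of \cite{NO, brini2024refined}. One equivariant direction should degenerate the fivefold theory to the Gromov--Witten, or membrane, theory of $S \times \PP^1$, while the other should degenerate it to refined sheaf counting on $K_S$; the symmetry $q \leftrightarrow q^{-1}$, visible on the left as the palindromic symmetry of the BPS polynomial and on the right as Poincar\'e duality on $M_{\beta,1}^{K_S}$, would then be a manifestation of the two equivariant directions.

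The main obstacle is that essentially every ingredient above currently rests on conjectural or only partially established foundations. The cohomological Gopakumar--Vafa definition and its agreement with the Gromov--Witten BPS integers of \cite{zinger2011} is known only in special cases; the identification of the $K$-theoretic Nekrasov--Okounkov refinement with the cohomological Lefschetz refinement is itself a delicate open comparison, made harder by the fact that $M_{\beta,1}^{K_S}$ is generally singular, so one is forced to work with perverse sheaves and intersection cohomology rather than ordinary cohomology; and the fivefold theory of $K_S \times \CC^2$ that would unify the two sides has not yet been given a rigorous mathematical construction. I would therefore expect unconditional progress to be possible first for surfaces where $M_{\beta,1}^{K_S}$ is smooth and its support map is well understood, with the general case awaiting the foundational refined correspondences.
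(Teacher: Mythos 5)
This statement is a conjecture, and the paper offers no proof of it: its entire justification is the heuristic you give in your third paragraph --- the two interpretations of the Nekrasov--Shatashvili limit $\epsilon_1 \to 0$ of a hypothetical $(\CC^\star)^2$-equivariant theory of the Calabi--Yau 5-fold $K_S \times \CC^2$, one via the 3-fold $K_S$ (giving the K-theoretic genus $0$ refined BPS invariants) and one via the fixed locus $S \times \CC$ (giving ordinary Gromov--Witten theory with $\epsilon_2$ as the genus expansion parameter) --- together with the evidence from K3 and abelian surfaces, where both sides are computed by the KKV formula, and from $\PP^2$ without point insertions. So where your proposal and the paper overlap, they take essentially the same approach, and your accounting of the conjectural foundations matches the paper's own framing.

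What you add beyond the paper are two concrete intermediate reductions, and these are genuinely different content. First, you route the Gromov--Witten side through Lemma \ref{lemma_stable_pairs}, converting it into $\chi$-graded stable-pairs theory of $S \times \PP^1$; this step is even unconditional when $S = S_n$, since $S_n \times \PP^1$ is deformation equivalent to a toric 3-fold and the GW/pairs correspondence is known there. Second, you propose a Maulik--Toda-style model: both the genus grading (via $(q^{1/2}-q^{-1/2})^{2g}$ and relative hard Lefschetz for the support map $M_{\beta,1}^{K_S} \to |\beta|$) and the Nekrasov--Okounkov K-theoretic $q$-grading should live on the same (intersection) cohomology of $M_{\beta,1}^{K_S}$, reducing the conjecture to comparing two refinements of one space. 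Neither step appears in the paper, and together they make the conjecture more actionable than the pure 5-fold heuristic; the cost, which you correctly flag, is that they stack two further open comparisons (cohomological GV versus the BPS integers of Ionel--Parker/Zinger, and K-theoretic versus Lefschetz refinement, on a generally singular moduli space) on top of the already hypothetical 5-fold theory. Your closing suggestion --- to first treat surfaces where $M_{\beta,1}^{K_S}$ is smooth with well-understood support map --- is consistent with the one case the paper records as settled with insertions, namely K3 surfaces, and with the $\PP^2$ result without insertions that the paper cites.
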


This conjecture suggests a striking connection between the Gromov--Witten theory of $S \times \CC$ and refined sheaf counting on $K_S$.
From a physics perspective, this conjecture aligns with expectations about the fully refined topological string on $K_S$, which is anticipated to be a $\CC^\star_{\epsilon_1} \times \CC^\star_{\epsilon_2}$-equivariant theory of the Calabi--Yau $5$-fold $K_S \times \CC^2$. In this setting, the action on $\CC^2$ has weights $\epsilon_1$ and $\epsilon_2$, while the action
on the fibers of $K_S$ has weight $-\epsilon_1 - \epsilon_2$. For initial developments using sheaf counting, see \cite{NO}, and for a Gromov--Witten-based formulation, see \cite{brini2024refined}. Mathematically, the fully refined topological string should encode the refined BPS invariants of $K_S$ in all genera.

In the limit $\epsilon_1 \to 0$, known in the physics literature as the Nekrasov-Shatashvili limit, only the genus $0$ refined BPS invariants of $K_S$ should contribute, with the K-theoretic definition using the remaining action of $\CC^\star_{\epsilon_2}$ on the fibers of $K_S$ with weight $\epsilon_2$. 
Meanwhile, in this limit, the fixed locus of $\CC^\star_{\epsilon_2}$ acting on 
$K_S \times \CC^2$ is $S \times \mathbb{C}$, with normal weights $-\epsilon_2$ and 
$\epsilon_2$. An expected property of the refined topological string is that, in such cases, it should recover the standard (unrefined) Gromov--Witten theory of 
$S \times \CC$, where $\epsilon_2$ plays the role of the genus expansion parameter \cite[\S 2.3]{NO}. These two different interpretations of the limit $\epsilon_1 \rightarrow 0$ of the refined topological string theory, either using the 3-fold $K_S \subset K_S \times \CC^2$ or the 3-fold $S \times \CC \subset K_S \times \CC^2$, lead to a relationship between enumerative invariants of the form predicted by Conjecture \ref{conj_DT}.

Finally, we note that a version of Conjecture \ref{conj_DT} without point insertions is proved for $S=\PP^2$ and conjectured for del Pezzo surfaces in \cite[Theorem 1.10]{bousseau_tak}.

\subsection{K3 surfaces and refined DT invariants}
Let $S$ be a smooth projective K3 surface. In this case, the natural analogue of Conjecture \ref{conj_DT}, formulated using reduced invariants, is known to hold. 
Indeed, for every algebraic $\beta \in H_2(S,\ZZ)$ with $\beta^2=2h-2$, the Laurent polynomials $BPS_\beta^S(q)$ and $\Omega_\beta^{K_S}(q)$ both coincide with the coefficient of $u^h$ in the power series expansion of 
\begin{equation}\label{eq_kkv}
\prod_{n \geq 1} \frac{1}{(1-u^n)^{20}(1-qu^n)^2(1-q^{-1}u^n)^2}\,.\end{equation}
For $BPS_\beta^S(q)$, this follows from the KKV conjecture \cite{KKV}, proved in \cite{KKV_proof}, while the corresponding result for $\Omega_\beta^{K_S}(q)$ is proved in \cite[Remark 4.7]{thomas2024refined}. 
We also refer to \cite[\S 7, Theorems 25-28]{bousseau2017tropical} 
for additional results involving point insertions for K3 and abelian surfaces  which can be viewed as special cases of Conjecture \ref{conj_DT}. 
While these results for K3 and abelian surfaces may initially appear coincidental, Conjecture \ref{conj_DT} suggests that similar results should hold in a much broader context. 
The only coincidence is that $K_S=S\times \CC$ for K3 and abelian surfaces, whereas in general these two 3-folds play distinct roles.

\subsection{Real K3 surfaces and Welschinger invariants}
\label{sec_K3_real}
Conjecture \ref{conj_bps_w} predicts that the specialization of the BPS polynomials
at $q=-1$ for rational surfaces coincides with Welschinger invariants counting real rational curves. This naturally leads to the broader question of whether the specialization of BPS polynomials at $q=-1$ still has a meaningful interpretation within 
real algebraic geometry for more general surfaces. In general, the answer is unclear due to the lack of generality of the definition of Welschinger invariants. However, we remark in this section that the answer is positive when $S$ is a smooth projective K3 surface.

Indeed, consider a smooth projective real K3 surface, and an algebraic primitive class $\beta \in H_2(S,\ZZ)$ with $\beta^2=2h-2$.
Then, there exist only finitely many rational curves in $S$ of class $\beta$, all with $h$ nodes.
The count $W_\beta^S$ of the corresponding real curves with Welschinger sign is determined by
\cite[Corollary 0.1 (2)]{KR_realK3} as the coefficient of $u^h$ in the power series expansion of
\begin{equation}\label{eq_K3}
\prod_{r\geq 1} \frac{1}{(1+u^r)^{e_\RR}} 
\prod_{s \geq 1} 
\frac{1}{(1-u^{2s})^{\frac{e_\CC -e_\RR}{2}}}=\prod_{n \geq 1} \frac{1}{(1+u^n)^{\frac{e_\CC+e_\RR}{2}}(1-u^n)^{\frac{e_\CC-e_\RR}{2}}}\,,\end{equation}
where $e_\CC=24$ is the topological Euler characteristic of $S$, and $e_\RR$ is the topological Euler characteristic of the real locus of $S$.
The formula \eqref{eq_K3} agrees with the specialization at $q=-1$ of \eqref{eq_kkv} if and only if $e_\RR=-16$.
Therefore, we have $BPS_\beta^S(-1)=W_\beta^S$ for real K3 surfaces with $e_\RR=-16$.
By \cite[p85, Figure 3]{real_enriques}, real K3 surfaces with $e_\RR=-16$ exist and have real loci given by either a genus $9$ surface, or the union of a sphere and a genus $10$ surface.  
The latter case is distinguished in several ways. For example, the corresponding real K3 surfaces are maximal, meaning that they saturate the Smith-Thom inequality 
$\dim H_\star(S(\RR),\ZZ/2) \leq \dim H_\star(S,\ZZ/2\ZZ)$. 
Moreover, these real K3 surfaces can be explicitly constructed from toric degenerations with trivial choices of real gluing data by 
\cite[Proposition 9.1]{hulya_real_KN}.

Finally, note that $-16$ is also the signature $\sigma$ of a K3 surface, and so the condition $e_\RR=-16$ can be rewritten as $e_\RR=\sigma$. All the rational surfaces $S_n$ with their standard real structure appearing in Conjecture \ref{conj_bps_w} also satisfy the condition $e_\RR=1-n=\sigma$.
This raises the question of whether there exists a more general real interpretation of $BPS_\beta^S(-1)$ applicable to all real surfaces 
with $e_\RR=\sigma$.

\bibliographystyle{plain}
\bibliography{bibliography}

\end{document}